\newtheorem{theorem}{Theorem}[section]
\newtheorem{assumption}{Assumption}
\newtheorem{lemma}{Lemma}[section]
\newtheorem{example}{Example}[section]
\newtheorem{remark}{Remark}[section]
\newcommand{\E}{\ensuremath{\mathbb E}}
\newcommand{\N}{\ensuremath{\mathbb N}}
\newcommand{\p}{\ensuremath{\mathbb P}}
\newcommand{\1}{\ensuremath{\mathbb 1}}
\DeclareMathOperator*{\esssup}{ess\,sup}
\DeclareMathOperator*{\argmin}{arg\,min}
\title{Convergence of the Kiefer-Wolfowitz algorithm in the presence of discontinuities\thanks{Both 
authors were supported by the ``Lend\"ulet'' grant LP 2015-6. Part of this research
was performed while the authors were participating
in the Simons Semester on Stochastic Modeling and Control
at Banach Center, Warsaw in 2019.}} 
\author{Mikl\'os R\'asonyi\thanks{Alfr\'ed R\'enyi Institute of Mathematics, Budapest
and Mathematical Institute, Warsaw.} \and 
Kinga Tikosi\thanks{Alfr\'ed R\'enyi Institute of Mathematics, Budapest
and Mathematical Institute, Warsaw. During the preparation
of this paper the author attended the PhD school of 
Central European University, Budapest and was also affiliated with E\"otv\"os Lor\'and University, 
Budapest, where she was supported by Project no. ED 18-1-2019-0030 (Application domain specific highly 
reliable IT solutions subprogramme) which has been implemented with the support provided from the National 
Research, Development and Innovation Fund of Hungary, financed under the Thematic Excellence Programme funding scheme.} }
\date{\today}
\begin{document}

\maketitle

\begin{abstract}
	In this paper we estimate the expected error of a stochastic approximation
	algorithm where the maximum of a function is found using finite differences of
	a stochastic representation of that function. An error estimate of the order $n^{-1/5}$ for the $n$th iteration
	is achieved using suitable parameters. The novelty with respect to previous studies is
	that we allow the stochastic representation to be discontinuous and to consist of possibly dependent random variables (satisfying a mixing condition).  
%	We demonstrate the significance of this result
%	on a  example from financial mathematics.
	%gradient method, where the objective function is given in the form of an expectation.
\end{abstract}

\begin{section}{Introduction}

We are interested in maximizing a function $U:\mathbb{R}^{d}\to\mathbb{R}$ which is unknown. 
However, we can observe a sequence $J(\theta,X_{n})$, $n\geq 1$ where
$J:\mathbb{R}^d\times\mathbb{R}^m\to\mathbb{R}$ is measurable,
\begin{equation}\label{rubayyat}
\E[J(\theta, X_1)]= U(\theta),\ \theta\in\mathbb{R}^{d},
\end{equation}
and $X_{n}$, $n\geq 1$ is an $\mathbb{R}^m$-valued stationary process in the strong sense. 
The stochastic representations $J(\theta,X_{n})$ 
are often interpreted as noisy measurements of $U(\theta)$. In this paper we focus on applications
to mathematical finance, described in Section \ref{sec:mathfin} below, where $J(\theta,X_t)$
are functionals of observed economic variables $X_t$ and $\theta$ determines an investor's
portfolio strategy. In that context, stochasticity does not come from measurement errors
but it is an intrinsic property of the system. 
Maximizing $U$ serves to find the best investment policy in an online,
adaptive manner. 

We study a recursive algorithm employing finite differences, 
as proposed by Kiefer and Wolfowitz in \cite{kiefer1952stochastic}.
%, and take two measurements to estimate the gradient. 
This is a variant of the Robbins-Monro stochastic gradient method \cite{robbins1951stochastic} where, instead of the objective function itself, its gradient is 
assumed to admit a stochastic representation. 
 
The novelty in our work is that we do not assume differentiability, not even continuity of $\theta\to J(\theta,\cdot)$ and the sequence $X_n$ may well be dependent as long as it satisfies a mixing condition. The only result in such a
setting that we are aware of is in \cite{laruelle2012stochastic}, however, they only study almost sure convergence, without
a convergence rate. Our purpose is not to find the weakest possible hypotheses but to arouse keen
interest in the given problem that can lead to
further, more general results. Our work is also a continuation of \cite{fort,chau2016fixed}, where discontinuous
stochastic gradient procedures were treated.

The main theorems are stated in Section \ref{sett} and
proved in Section \ref{prof}. Section \ref{auxiliary} recalls earlier results that we are relying on.
A numerical example is provided in Section \ref{opt}. We explain the
significance of our results for
algorithmic trading in Section \ref{sec:mathfin}. 

\begin{section}{Setup and results}\label{sett}

%Let $U(\theta)$ be the function we intend to maximize where $U:\R\rightarrow\R^d$ and let $J(\theta,X)$ be such that 
%$\E[J(\theta, X_1)]= U(\theta)$.

For real-valued quantities $X,Y$ the notation $O(X)=Y$ means that there is a constant $C>0$ such that $|X|\leq CY$.
We will always work on a fixed probability space $(\Omega,\mathcal{F},P)$ equipped with a filtration $\mathcal{F}_{n}$, $n\in\mathbb{N}$ such that $\mathcal{F}_{0}=\{\emptyset,\Omega\}$. A decreasing sequence of sigma-algebras $\mathcal{F}^{+}_{n}$, $n\in\mathbb{N}$ is also given, such that, for each $n\in\mathbb{N}$, 
$\mathcal{F}_n$ and $\mathcal{F}_n^+$ are independent and
$X_n$ is adapted to $\mathcal{F}_n$. The notation $\E[X]$ refers to the
expectation of a real-valued random variable $X$, while $\E_k[X]$ is a shorthand
notation for $\E[X\vert\mathcal{F}_k]$, $k\in\mathbb{N}$.
$P_k(A)$ refers to the conditional probability $P(A\vert\mathcal{F}_k)$.
We denote by $\1_A$ the indicator 
of a set $A$. The notation $\omega$ refers to a generic
element of $\Omega$. For $r\geq 1$, we refer to the set of random variables
with finite $r$th moments as $L^r$. $|\cdot|$
denotes the Euclidean norm in $\mathbb{R}^k$
where $k$ may vary according to the context.

For $i=1,\ldots,d$, let $\mathbf{e}_{i}\in\mathbb{R}^d$ denote the vector whose $i$th coordinate is $1$ and the other coordinates are $0$. For two vectors $v,w\in\mathbb{R}^m$ the relation $v\leq w$ expresses that $v^i\leq w^i$ for all the components
$i=1,\ldots,m$. Let $B_r:=\{\theta\in\mathbb{R}^d:\,
|\theta|\leq r\}$ denote the ball of radius $r$, for
$r\geq 0$.

Let the function $U:\mathbb{R}^d\to\mathbb{R}$ have a unique maximum at the point $\theta^{*}\in\mathbb{R}^{d}$.
Consider the following recursive stochastic approximation scheme for finding $\theta^{*}$:
\begin{align} \label{recursive_scheme}
    \theta_{k+1}=\theta_k + \lambda_k H(\theta_k,X_{k+1},c_k)\text{, for }k\in\mathbb{N},
\end{align} starting from some initial (deterministic) guess $\theta_0\in\mathbb{R}^d$, 
where $H$ is an estimator of the gradient of $J$, defined as $$ 
H (\theta,x,c)=\sum_{i=1}^d\frac{J(\theta+c\mathbf{e}_i,x)
-J(\theta-c\mathbf{e}_i,x)}{2c} \mathbf{e}_i,
$$
for all $\theta\in\mathbb{R}^d$, $x\in\mathbb{R}^m$ and $c>0$.
%We interpret $J(\theta_k\pm c_k\mathbf{e}_i,X_{k+1})$ as (noisy) measurements of $U(\theta_k\pm c_k\mathbf{e}_i)$.

The sequences $(\lambda_k)_{k\in\mathbb{N}}$ and $(c_k)_{k\in\mathbb{N}}$ appearing in \eqref{recursive_scheme} will consist of
positive real numbers, which are to be specified later. We will distinguish the cases where $\lambda_k$, $c_{k}$ tend 
to zero and where they are kept constant, the former being called \emph{decreasing gain} approximation and the latter \emph{fixed gain} approximation.

\begin{remark} \label{rem:X_same_or_different}
{\rm Our results below could easily be formulated in a more general setting where
$J(\theta_k+c_k\mathbf{e}_i,X_{k+1}(i))$ and $J(\theta_k-c_k\mathbf{e}_i,X_{k+1}'(i))$, $i=1,\ldots,d$ are considered
with distinct $X_{k+1}(i)$ and $X_{k+1}'(i)$. In the applications that motivate us
this is not the case hence, for reasons of simplicity, 
we stay in the present setting.}
\end{remark}

\begin{assumption} \label{asp:G}
    $U$ is continuously differentiable with unique maximum $\theta^*\in\mathbb{R}^d$.
    Denote $G(\theta)=\nabla U(\theta)$. 
    The function $G$ is assumed Lipschitz-continuous with Lipschitz-constant $L_G$.  
\end{assumption}

We assume in the sequel that the function $J$ in \eqref{rubayyat} has a specific form. 
%Namely, there exist a finite number of 
%sets such that $J$ is Lipschitz on these (but it may well have discontinuity at the boundaries).
Note that though $J$ is not continuous, $U$ can nonetheless be continuously differentiable, by the
smoothing effect of randomness.

\begin{assumption} \label{asp:H-special_form} 
Let the function $J$ be of the following specific form:
$$
J(\theta,x)=l_{0}(\theta)\mathbb{1}_{A_{0}(x)}+{}
\sum _{i=1}^{m_s}  \mathbb{1}_{A_{i}(x)} l_i (\theta,x),{}
$$
where $l_{i}:\mathbb{R}^{d}\times\mathbb{R}^{m}\to\mathbb{R}^{d}$
are Lipschitz-continuous (in both variables) for $i=1,\ldots,m_{s}$ and, for some
$m_p,m_p'\in\mathbb{N}$,
$$
A_{i}(x):=\left(\cap_{j=1}^{m_{p}} \{\theta: x\leq g_i^j(\theta)\}\right)
\bigcap \left(\cap_{j=1}^{m_{p}'} \{\theta: x> h_i^j(\theta)\}\right),\ i=1,\ldots,m_{s}
$$
with Lipschitz-continuous functions $g^{j}_{i},h_i^j:\mathbb{R}^d\to\mathbb{R}^m$.
%of the form
%$$
%g^{j}_{i}(\theta^{1},\ldots,\theta^{d})=p_{i}^{j}(1)\theta^{1}+\ldots+p_{i}^{j}(d)\theta^{d}
%$$
%where $p_{i}^{j}(k)$ are constants. 
Furthermore, $A_{0}(x):=\mathbb{R}^{d}\setminus \cup_{i=1}^{m_{s}}
A_{i}(x)$ and $$
\cup_{x\in\mathbb{R}^{m}}\cup_{i=1}^{m_{s}} A_{i}(x)\subset B_{D}
$$ 
for some $D>0$.
The function $l_0$ is twice continuously differentiable and there are constants $L_1'',L_2''$
such that
$$
L_1'' I\leq \nabla\nabla l_0\leq L_2'' I
$$
where $I$ is the $d\times d$ identity matrix.

%of the form
%$$
%l_0(\theta)=-A|\theta|^2+B,\ \theta\in\mathbb{R}^d,
%$$
%where $A>0$ and $B\in\mathbb{R}$.

%Furthermore the conditional density functions $\varphi_{n}(x,\omega)$ of $X_{n+1}$ with respect to 
%$\mathcal{F}_{n}$  
%are bounded and have exponentially vanishing tails, i.e. 
%$\varphi_{n}(\omega,x_1,\dots,x_d)\leq \mathbb{c}_1 e^{\mathbb{c}_2(-|x_1|-\dots-|x_d|)}$ for all $|x_1|,\dots,|x_d|$ and for %almost all $\omega\in\Omega$.
\end{assumption}

\begin{remark}\label{foxes}{\rm
%Notice that $l_0$ is twice continuously differentiable and 
%$|\nabla\nabla l_0(\theta)|\leq L'':=2A$, for all $\theta\in\mathbb{R}^d$.
Assumption \ref{asp:H-special_form} implies that $\nabla l_0$ grows linearly, hence $l_0$ itself is locally Lipschitz with linearly growing Lipschitz-coefficient, that is,
$$
|l_0(\theta_1)-l_0(\theta_2)|\leq L_0(1+|\theta_1|+|\theta_2|)|\theta_1-\theta_2|,
$$
with some $L_0>0$, for all $\theta_1,\theta_2\in\mathbb{R}^d$.}
\end{remark}

In plain English, we consider $J$ which is smooth on a finite number of bounded domains (the interior
of the constraint sets $A_{i}(x)$, $i=1,\ldots, m_{s}$)
but may have discontinuities at the boundaries. Furthermore, $J$ (and hence also $U$) is required to be quadratic
``near infinity'' (on $A_{0}(x)$).

We briefly explain why such a hypothesis is not
restrictive for real-life applications. Normally,
there is a compact set $Q$ (e.g.\ a
cube or a ball) such that only
parameters from $Q$ are relevant, i.e.\ $U$ is
defined only on $Q$. Assume it has some stochastic
representation \begin{equation}\label{csicsi}
U(\theta)=\E[J(\theta,X_0)],\ \theta\in Q
\end{equation}
and a unique maximum $\theta^*\in Q$. 
Assume that $Q\subset B_D$ for some $D$.
Extend $U$ outside $B_D$ as $U(\theta)=-A|\theta|^2+B$
for suitable $A,B$. Extend $U$ and $J$ to $B_D\setminus Q$ as well
in such a way that $U$ is continuously 
differentiable, $U(\theta)<U(\theta^*)$ 
for all $\theta\neq \theta^*$ (see Section 4 of \cite{chau2019stochastic} for a
rigorous construction of this kind).
Set $J:=U$ outside $Q$.
Then our maximization procedure can be applied to this setting for finding
$\theta^*$. 

Defining $U=l_0$ (essentially) quadratic outside
a compact set is one way of solving the problem
that such procedures often leave their effective
domain $Q$. Other
solutions are resetting, see 
e.g.\ \cite{gerencser1992rate}; or an analysis
of the probability of divergence, see 
e.g.\ \cite{bmp}.

The next assumption postulates that the process
$X$ should be bounded and the conditional laws of $X_{k+1}$ should be absolutely
continuous with a bounded density.

\begin{assumption}\label{smoothness}
For each $k\in\mathbb{N}$, 
%the conditional probabilities $P(X_{k+1}\in\cdot|\mathcal{F}_{k})$ are such that
$$
P_k(X_{k+1}\in A)(\omega)=\int_{A}p_{k}(u,\omega)\, du,P\mbox{-a.s.,} A\in\mathcal{B}(\mathbb{R}^{d}),
$$
for some measurable $p_{k}:\mathbb{R}^{d}\times\Omega\to\mathbb{R}_{+}$ and 
there is a fixed constant $F$ such that $p_{k}(u,\omega)\leq F$ holds for all $k$, $\omega$, $u$. The random variable $X_0$
satisfies $|X_0|\leq K_0$ for some constant $K_0$.
\end{assumption}

Note that, by strong stationarity, the process $X_k$
is uniformly bounded under Assumption \ref{smoothness}.

We will assume a certain mixing property about the
process $X_n$ which we recall now. A family of $\mathbb{R}^d$-valued random
variables $Z_{i}$, $i\in  \mathcal{I}$
is called \emph{$L^r$-bounded} for some $r\geq 1$ if
$\sup_{i\in \mathcal{I}}\E|Z_i|^r<\infty$, here $\mathcal{I}$
may be an arbitrary index set.

For a random field $Y_n(\theta)$, $n\in \N$, $\theta\in\mathbb{R}^d$
bounded in $L^r$ for some $r\geq 1$, we
define, for all $n\in\mathbb{N}$,
\begin{align*}
    &M_r^n(Y)=\esssup_{\theta}\sup_{k\in\N}\E^{1/r}[\left\vert Y_{n+k}(\theta)\right\vert^r\vert\mathcal{F}_n],\\
    &\gamma_r^n(\tau,Y)=\esssup_{\theta}\sup_{k\geq\tau}
    \E^{1/r}\left[\left\vert Y_{n+k}(\theta) -\E[Y_{n+k}(\theta)\vert\mathcal{F}_{n+k-\tau}^{+}\vee\mathcal{F}_n]\right\vert^r\vert\mathcal{F}_n \right],\tau\geq 0,\\
&\Gamma_r^n(Y)=\sum_{\tau=0}^{\infty}\gamma_r^n(\tau,Y).
\end{align*}
These quantities clearly make sense also for any $L^r$-bounded
stochastic process $Y_n$,
$n\in\mathbb{N}$ (the essential suprema disappear
in this case). $M^n_r(Y)$ measures the (conditional) moments of $Y$ while $\Gamma_r^n(Y)$
describes its dependence structure (like covariance decay).
In particular, one can define
$M^n_r(X)$, $\Gamma^n_r(X)$. We clearly have $M^n_r(X)\leq K_0$ under Assumption \ref{smoothness}.
The quantities $\Gamma^n_r(X)$ will figure
in certain estimates later.

%Let us recall the notion of conditional $L$-mixing as defined in %\cite{chau2016fixed}, inspired by \cite{gerencser1989class}. 

\begin{assumption}\label{asp:mix}
For some $\epsilon>0$, $\gamma_3^n(\tau,X)=O((1+\tau)^{-4-\epsilon})$, where the constant of $O(\cdot)${}
is independent of $\omega$, $\tau$ and $n$.
Furthermore,
$$
\E\left[\left\vert X_{n+k} -\E[X_{n+k}\vert\mathcal{F}_{n}^{+}]
\right\vert\right]=O(k^{-2-\epsilon}),\ k\geq 1,
$$    
where the constantof $O(\cdot)$ is independent of $n,k$.
%$n\in\mathbb{N}$ be bounded
%in $L^r$ for each $r\geq 1$.
\end{assumption}

Both requirements in Assumption \ref{asp:mix} are about how the effect of the past on the present
decreases as we go back farther in time.

\begin{example}
{\rm Let $\varepsilon_n$, $n\in\mathbb{N}$ be 
a bounded i.i.d.\ sequence in $\mathbb{R}^m$ with bounded density w.r.t.\ the Lebesgue measure and choose $\mathcal{F}_k:=\sigma(\varepsilon_j,\ j\leq k)$
and $\mathcal{F}_k^+:=\sigma(\varepsilon_j,\ j\geq k+1)$ .
Then $X_n:=\varepsilon_n$, $n\in\mathbb{N}$
satisfies Assumptions
\ref{smoothness} and \ref{asp:mix}. 
A causal infinite moving average process whose coefficients decay sufficiently fast is another pertinent example.
Indeed, using the argument of Lemma 4.2 of \cite{chau2016fixed} one can show that
$X_n:=\sum_{j=0}^{\infty}s_j\varepsilon_{n-j}$, $n\in\mathbb{N}$ satisfies Assumption \ref{asp:mix}
where the $\varepsilon_i$ are as above,
$s_0\neq 0$ and $|s_j|\leq (1+j)^{-\beta}$ holds for some $\beta>9/2$. 
Assumption \ref{smoothness} is also 
clearly satisfied in that model.}
\end{example}

%\begin{definition}
%{\rm Let $\Theta\subset\mathbb{R}^d$ be an index set 
%and let $Y_k(\theta), k\in\N$, $\theta\in\Theta$ 
%be a random field such that $\sup_{k,\theta}|Y_k(\theta)|^r<
%\infty$ for all $r\geq 1$. 
%\end{align*}

\begin{remark}{\rm
A random field $Y_n(\theta), n\in \N$ is called uniformly conditionally $L$-mixing 
if $Y_n(\theta)$ is adapted to the filtration $\mathcal{F}_n$, $n\in\mathbb{N}$ for all $\theta$,
and the sequences $M_r^n(Y)$, $\Gamma_r^n(Y)$, $n\in\mathbb{N}$ are bounded in $L^r$ for each $r\geq 1$.
Our Assumption \ref{asp:mix} thus
requires a sort of related mixing property.
Conditional $L$-mixing was introduced in \cite{chau2016fixed}, inspired by \cite{gerencser1989class}. 
%The interpretation of the conditional
%moments $M^n_r(X)$ is clear
%while $\Gamma^n_r(X)$ quantifies how dependent the sequence $X_n$, $n\in\mathbb{N}$ is. Both 
}
\end{remark}

%\end{definition}

%The assumption below has to become a theorem.
%\begin{assumption} \label{asp:Lmix}
%The random field $J(\theta,X_k)$, $k\in\mathbb{N}$, $\theta\in %B_{R}$ is uniformly conditionally $L$-mixing for all $R>0$. %Moreover, $\E\gamma_2^0(\tau,X)\leq C_{\gamma}\tau^{-1}$, %$\tau\geq 1$ for some positive constant $C_{\gamma}$.
%\end{assumption}

\begin{subsection}{Decreasing gain stochastic approximation}
The usual assumption on the sequences $(\lambda_k)_{k=1,2,\dots}$ and $(c_k)_{k=1,2,\dots}$ in the definition of the recursive scheme (\ref{recursive_scheme}) 
are the following, see \cite{kiefer1952stochastic}:
%satisfy the requirements below:
\begin{eqnarray} \nonumber
    c_k &\rightarrow& 0,\ k\rightarrow\infty,\\ 
\nonumber    \sum_{k=1}^{\infty} \lambda_k &=& \infty,\\ 
\nonumber    \sum_{k=1}^{\infty} \lambda_kc_k &<& \infty, \\
\label{akck}    \sum_{k=1}^{\infty} \lambda_k^2 c_k^{-2} &<& \infty.
\end{eqnarray}

In the sequel we stick to a more concrete choice
which clearly fulfills the conditions in \eqref{akck} above.
\begin{assumption}\label{ac}
We fix $\lambda_0,c_0>0$, $\gamma\in (0,1/3)$ and set
$$
\lambda_k=\lambda_0 \int_{k}^{k+1}\frac{1}{u}\, du,
$$ 
and $c_k=c_0 k^{-\gamma}$, $k\geq 1$. 
We also assume $c_0\leq 1$.
% Assume that $|c_{k}|\leq 1$, $k\geq 1$.
\end{assumption}
Asymptotically $\lambda_{k}$ behaves like $\lambda_{0}/k$. However, our choice somewhat simplifies the otherwise
already involved theoretical analysis.

The ordinary differential equation associated with the problem is 
 \begin{align}
 \Dot{y}_t=\frac{\lambda_0}{t}G(y_t).\label{ode_dec_gain}   
 \end{align}
 
The idea to use an associated deterministic ODE to study the asymptotic properties of recursive schemes was introduced by Ljung in \cite{ljung1977}. The intuition behind this association is that on the long run the noise effects average out and the asymptotic behavior is determined by this 'mean' differential equation. Heuristic connection between the dynamics of the recursive scheme and the ODE can be seen if one looks at the Euler-discretization of the latter.

The solution of (\ref{ode_dec_gain}) with initial condition $y_s=\xi$ will be denoted by $y(t,s,\xi)$ for $0< s\leq t$.

\begin{assumption} \label{asp:stability}
    The ODE \eqref{ode_dec_gain} fulfills the stability assumption formulated below: 
    %for every $\xi, 0<s<t$, $y(t,s,\xi)$ is well-defined and 
    there exist $C^{*}>0$ and $\alpha>0$ such that 
    \begin{align*}
        \left\vert\frac{\partial y(t,s,\xi)}{\partial\xi}\right\vert\leq C^{*} \left(\frac{s}{t}\right)^{\alpha}
    \end{align*}
    for all $0<s<t$.
\end{assumption}

Our main result comes next.
 
\begin{theorem} \label{thm:convergencrate}
Let Assumptions \ref{asp:G}, \ref{asp:H-special_form}, \ref{smoothness}, \ref{asp:mix}, \ref{ac} 
and \ref{asp:stability} hold.
%, and let the ODE (\ref{ode_dec_gain}) 
%fulfill Assumption \ref{asp:stability}. Then for each $\epsilon>0$, 
Then
$\E|\theta_n-\theta^{*}|=O(n^{-\chi}+n^{-\alpha})$, $n\geq 1$,
where
$\chi =\min\{\frac{1}{2} - \frac{3}{2}\gamma,\gamma\}$
and the constant in $O(\cdot)$ depends only on $\theta_0$.
\end{theorem}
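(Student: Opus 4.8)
The plan is to treat the recursion \eqref{recursive_scheme} as a noisy, time-rescaled Euler discretisation of the ODE \eqref{ode_dec_gain} (with the time variable identified with the index, $t_k=k$, so that $\lambda_k=\lambda_0\int_k^{k+1}u^{-1}\,du$ is exactly the corresponding time increment) and to control the discrepancy between the iterates and the flow $y(t,s,\xi)$ through the stability estimate of Assumption~\ref{asp:stability}. Recalling $G(\theta^*)=0$, I would first split the increment as
\[
H(\theta_k,X_{k+1},c_k)=G(\theta_k)+\beta_k+V_{k+1}(\theta_k),
\]
where $\beta_k=\bar H(\theta_k,c_k)-G(\theta_k)$ is the finite-difference bias, $\bar H(\theta,c):=\E[H(\theta,X_0,c)]$ is the stationary mean field, and $V_{k+1}(\theta):=H(\theta,X_{k+1},c_k)-\bar H(\theta,c_k)$ is the centred fluctuation field. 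The $i$-th component of $\bar H$ equals the symmetric difference quotient $(U(\theta+c\mathbf e_i)-U(\theta-c\mathbf e_i))/(2c)$, so Assumption~\ref{asp:G} (Lipschitz $G$, i.e.\ $U\in C^{1,1}$) immediately gives $|\beta_k|=O(c_k)=O(k^{-\gamma})$; this is the ultimate source of the $n^{-\gamma}$ term.

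Before the fluctuation can be handled I would establish moment bounds $\sup_k\E|\theta_k|^r<\infty$ for the relevant $r$. Here the quadratic-at-infinity structure of $l_0$ imposed in Assumption~\ref{asp:H-special_form} (together with the linearly growing Lipschitz coefficient from Remark~\ref{foxes}) furnishes a Lyapunov/drift inequality pushing the iterates back towards $B_D$, so that all subsequent Lipschitz and moment estimates apply on a region of controlled size, uniformly in $k$.

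The core of the argument is the analysis of the fluctuation field $V$. The difference $J(\theta+c\mathbf e_i,x)-J(\theta-c\mathbf e_i,x)$ is $O(c)$ off the discontinuity set but jumps by $O(1)$ when $X_{k+1}$ crosses one of the boundaries $\{x=g^j_i(\theta)\}$ or $\{x=h^j_i(\theta)\}$; by Assumption~\ref{smoothness} the conditional probability of such a crossing is $O(c_k)$, since the relevant $x$-region has Lebesgue measure $O(c_k)$ (the $g^j_i,h^j_i$ are Lipschitz) and the densities $p_k$ are bounded by $F$. Dividing by $2c_k$, this yields moment estimates $M_r^k(V)\lesssim c_k^{-1}$-type bounds and, via Assumption~\ref{asp:mix}, decay bounds for $\Gamma_r^k(V)$, both carrying explicit negative powers of $c_k$. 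I would also need modulus-of-continuity estimates in $\theta$ so that the field can be evaluated at the random, $\mathcal F_k$-measurable point $\theta_k$; the same rare-crossing mechanism controls $\E|V_{k+1}(\theta)-V_{k+1}(\theta')|$. Feeding these into the moment inequalities for conditionally $L$-mixing processes recalled in Section~\ref{auxiliary}, with each term $\lambda_j V_{j+1}(\theta_j)$ weighted by the propagation factor $\partial y/\partial\xi=O((j/n)^\alpha)$ from Assumption~\ref{asp:stability}, produces after summation a bound of order $n^{-(1/2-3\gamma/2)}$; the power $3\gamma/2$ is precisely the accumulated cost of the $c_k^{-1}$ amplification in the moment and mixing coefficients of $V$.

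Finally I would assemble the pieces through the stability of \eqref{ode_dec_gain}: iterating the one-step error expansion, the initial error $|\theta_0-\theta^*|$ is damped by $(s/t)^\alpha$ and contributes $O(n^{-\alpha})$, the bias contributes $O(n^{-\gamma})$, and the fluctuation contributes $O(n^{-(1/2-3\gamma/2)})$, so that taking the worse of the two perturbation rates gives $\E|\theta_n-\theta^*|=O(n^{-\chi}+n^{-\alpha})$ with $\chi=\min\{\tfrac12-\tfrac32\gamma,\gamma\}$. The main obstacle, and the step that fixes the value of $\chi$, is the fluctuation analysis above: simultaneously (i) keeping the discontinuity-induced $c_k^{-1}$ blow-up in check using the bounded-density rarity of boundary crossings, (ii) propagating the weak dependence of $X_n$ through the indicator nonlinearities into usable bounds on $\Gamma_r^k(V)$, and (iii) transferring the field estimates to the random evaluation point $\theta_k$. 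Everything else is relatively standard, if lengthy, ODE-comparison bookkeeping.
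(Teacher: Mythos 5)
Your proposal assembles essentially the same ingredients as the paper: the $O(c_k)$ finite-difference bias via the Lipschitz property of $G$, the rare-crossing/bounded-density mechanism for the discontinuities (this is exactly how Lemmas \ref{conli}, \ref{colis} and \ref{lem:M-Gamma-estimate} work), the conditional-mixing maximal inequality of Section \ref{auxiliary}, moment bounds from the quadratic tail of $l_0$, and a final propagation through Assumption \ref{asp:stability}. However, there is a genuine gap at the step you yourself identify as the crux, item (iii): evaluating the fluctuation field at the random point $\theta_j$. Theorem \ref{elso} and the quantities $\tilde M_r$, $\tilde\Gamma_r$ only control a field at evaluation points measurable with respect to the conditioning $\sigma$-field, and they moreover require the conditional centering $\E[W_t\vert\mathcal{R}_0]=0$; your $V_{j+1}(\theta)$ is centred at the stationary mean, hence not conditionally centred when the $X_n$ are dependent, and $\theta_j$ depends on all innovations up to time $j$, so $V_{j+1}(\theta_j)$ cannot be fed into this machinery. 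Your proposed remedy, a modulus-of-continuity bound $\E\vert V_{j+1}(\theta)-V_{j+1}(\theta')\vert\lesssim c_j^{-1}\left(\vert\theta-\theta'\vert+c_j^2\right)$, is circular: to use it you must compare $\theta_j$ with some adapted reference trajectory, and bounding $\E\vert\theta_j-\mathrm{ref}\vert$ is precisely the quantity under estimation. Resolving that circularity by Gronwall costs a factor $\exp\left(C\int_1^n a_u/c_u\,du\right)=\exp\left(Cn^{\gamma}\right)$ over the full horizon, which destroys any polynomial rate. The alternative of taking absolute values step by step (summing one-step errors damped by $(j/n)^{\alpha}$) also fails, because $\E\vert V_{j+1}(\theta_j)\vert$ is only $O(1)$, so $\sum_j \lambda_j (j/n)^{\alpha}\cdot O(1)$ does not decay: the cancellation in the fluctuation sum must be exploited, but only the mixing machinery can do that, and it is blocked by the issues above.

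The missing idea, and the actual engine of the paper's proof, is an intermediate blocking scale: time is partitioned into blocks $[\lceil n^{\mu}\rceil,\lceil (n+1)^{\mu}\rceil)$ with $\mu=1/\gamma$, the comparison ODE is restarted at each block start ($\overline{y}_{\lceil n^{\mu}\rceil}=\theta_{\lceil n^{\mu}\rceil}$), so the reference trajectory is $\mathcal{F}_{\lceil n^{\mu}\rceil}$-measurable; within a block the fluctuation is centred by $\E[\,\cdot\,\vert\mathcal{F}_{\lceil n^{\mu}\rceil}]$ (term $\Sigma_2$, handled by Theorem \ref{elso} with the field $c_t(H-\E[H\vert\mathcal{F}_{\lceil n^{\mu}\rceil}])$ and deterministic weight $a_t/c_t$), the residual bias $\E[H\vert\mathcal{F}_{\lceil n^{\mu}\rceil}]-G$ is a separate term handled by the second half of Assumption \ref{asp:mix}, and the random-evaluation-point problem is resolved by Lemma \ref{conli} plus Gronwall \emph{within the block}, where $\int_{\mathrm{block}} a_u/c_u\,du=O(1)$ precisely because $\mu\gamma=1$. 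The resulting block errors $d_i=O(i^{-\beta})$, $\beta=\min\left(\frac{1}{2\gamma}-\frac12,2\right)$, are then telescoped across blocks using Assumption \ref{asp:stability}. This also corrects your accounting of the exponent: in the time scale the within-block fluctuation is of order $t^{-(1/2-\gamma/2)}$, and the further loss down to $t^{-(1/2-3\gamma/2)}$ comes from summing the errors of the roughly $t^{\gamma}$ blocks in the telescoping step, not from a $c_k^{-1}$ amplification of the mixing coefficients. Without the blocking construction (or an equivalent device), the fluctuation analysis as you sketch it cannot be carried out.
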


To get the best result set $\gamma=\frac{1}{5}$. In this case the convergence rate is $\chi=\frac{1}{5}$
(provided that $\alpha\geq 1/5$).
For Kiefer-Wolfowitz procedures \cite{sacks} establishes 
a convergence rate $n^{-1/3}$ under fairly restrictive conditions (e.g.\ $J$ is assumed smooth and $X$ is i.i.d.).
Our approach is entirely different from that of \cite{sacks} and relies on the ODE method (see e.g.\ \cite{kushner})
in the spirit of \cite{gerencser1992rate,laci_spsa1,laci_spsa2} where so-called 
SPSA procedures were analysed.

Theoretical analysis in the present case is much more
involved for two reasons: the discontinuities of $J$ and the state-dependent setting (hardly analysed in
the literature at all). Our results are closest to \cite{laci_spsa2} where a rate of $n^{-2/7}$ is 
obtained for the SPSA algorithm (a close relative of Kiefer-Wolfowitz) imposing strong
smoothness assumptions on $J$. As already remarked, in the absence of smoothness ours is the first
study providing a convergence rate.
Eventual strengthening of our result seems to be difficult and will be object of
further investigations. 

We also point to two related papers where stochastic gradient procedures are analysed:
\cite{fort} treats the Markovian case while \cite{chau2016fixed} is about possibly non-Markovian
settings. In these studies, the gradient of $J$ is assumed to exist but it may be discontinuous.
Some of the ideas of \cite{chau2016fixed} apply in the present, more difficult case where even
$J$ fails continuity.

\end{subsection}

\end{section}

\begin{subsection}{Fixed gain stochastic approximation}

Let us also consider a modified recursive scheme 
\begin{align} \label{fixed_gain_scheme}
    \theta_{k+1}=\theta_k+aH(\theta_k,X_{k+1},c),\ k\in\mathbb{N},
\end{align} where $a$ and $c$ are fixed (small) positive reals, independent of $k$. In contrast with the previous scheme (\ref{recursive_scheme}), which is meant to converge to the maximum of the function, this method is expected to \emph{track} the maximum.

The ordinary differential equations associated with the problem are 
 \begin{align}
 \Dot{y}_t=\lambda G(y_t),\label{ode_fixed_gain}   
 \end{align}
for each $\lambda>0$.

Note that, by an exponential time change, one can show that Assumption \ref{asp:stability} 
on the ODE (\ref{ode_dec_gain}) implies (\ref{ode_fixed_gain}) being exponentially stable, i.e.\ satisfying 
\begin{align*}
        \left\vert\frac{\partial y(t,s,\xi)}{\partial\xi}\right\vert\leq C^{*} e^{-\alpha \lambda(t-s)},\ 0< s\leq t
    \end{align*}
for some $\alpha>0$ (possibly different from the one in (\ref{ode_dec_gain})).

\begin{theorem} \label{thm:fixed_gain}
Let Assumptions \ref{asp:G}, \ref{asp:H-special_form}, \ref{smoothness}, \ref{asp:mix} and 
\ref{asp:stability} hold. Then $\E|\theta_n-\theta^{*}|=O\left(\max\left(c^2,\sqrt{\frac{a}{c}}\right)+
    e^{-a\alpha n}\right)$ holds for all $n\geq 1$ where the constant in $O(\cdot)$ depends
    only on $\theta_{0}$.
\end{theorem}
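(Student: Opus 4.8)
The plan is to apply the ODE method to the \emph{exponentially} stable flow of \eqref{ode_fixed_gain} with $\lambda=a$, comparing the iterates \eqref{fixed_gain_scheme} to its solution and reusing, at constant gain $a$ and constant offset $c$, the same moment and mixing estimates for the finite-difference field that would be developed for Theorem~\ref{thm:convergencrate}. I would first decompose the update direction as
$$H(\theta_k,X_{k+1},c)=G(\theta_k)+b(\theta_k,c)+\eta_k,$$
with $b(\theta,c):=G_c(\theta)-G(\theta)$ the deterministic bias, $G_c(\theta):=\E[H(\theta,X_1,c)]=\sum_{i=1}^{d}\frac{U(\theta+c\mathbf e_i)-U(\theta-c\mathbf e_i)}{2c}\mathbf e_i$ the finite-difference gradient under the stationary law, and $\eta_k:=H(\theta_k,X_{k+1},c)-G_c(\theta_k)$ the fluctuation, which is centred under the stationary law but, because of dependence, not a martingale difference (its nonzero conditional mean $\E_k[\eta_k]$ encodes the memory of the noise). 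A preliminary step, using the quadratic behaviour of $l_0$ outside $B_D$ granted by Assumption~\ref{asp:H-special_form}, is to show that for small $a$ the iterates are bounded in every $L^r$ uniformly in $n$, with constant depending only on $\theta_0$; this legitimises uniform Lipschitz and density bounds along the trajectory.

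Two properties of the discontinuous field then drive the estimate. \textbf{Bias.} Exploiting the smoothing effect of the bounded density (Assumption~\ref{smoothness}) on the indicator structure of Assumption~\ref{asp:H-special_form}, one shows that $U$ has a Lipschitz Hessian, which is more than the bare Lipschitz gradient of Assumption~\ref{asp:G}; the central difference is then second-order accurate and its odd Taylor term cancels by symmetry, so that $|b(\theta,c)|=O(c^2)$ uniformly on bounded sets. \textbf{Noise.} The decisive point is that $\E|\eta_k|^2=O(1/c)$ and not $O(1/c^2)$: if no discontinuity of $J$ meets the segment $[\theta-c\mathbf e_i,\theta+c\mathbf e_i]$ the difference quotient is $O(1)$ by Lipschitzness of the $l_i$, while the bounded density forces the probability of meeting one to be $O(c)$, so the $O(1/c)$ spikes contribute only $O(c)\cdot O(1/c^2)=O(1/c)$ to the second moment. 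This is precisely what yields $\sqrt{a/c}$ instead of $\sqrt{a}/c$. In parallel I would verify that the field $H(\cdot,X_{k+1},c)$ is conditionally $L$-mixing, with $M_r^n$ and $\Gamma_r^n$ inherited from those of $X$ through Assumption~\ref{asp:mix} and carrying a power of $c$ consistent with the $O(1/\sqrt c)$ moment.

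Given these inputs, the exponential-stability bound $\bigl|\partial y(t,s,\xi)/\partial\xi\bigr|\le C^{*}e^{-a\alpha(t-s)}$ recorded above the statement converts a discrete perturbation (Gronwall-type) comparison with the flow into
$$\E|\theta_n-\theta^{*}|\le C^{*}e^{-a\alpha n}|\theta_0-\theta^{*}|+O(c^2)+O(\sqrt{a/c})+O(a).$$
The first term is the transient, since $\theta^{*}$ is a fixed point and $|y(n,0,\theta_0)-\theta^{*}|=|y(n,0,\theta_0)-y(n,0,\theta^{*})|\le C^{*}e^{-a\alpha n}|\theta_0-\theta^{*}|$. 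The bias term is the per-step bias $O(c^2)$ weighted by the geometric flow $e^{-a\alpha(n-k)}$, whose total mass $\asymp 1/(a\alpha)$ absorbs the gain factor $a$. The noise term comes from the conditional $L$-mixing moment inequality applied to $\sum_k a\,\Phi_{n,k}\eta_k$, with $\Phi_{n,k}$ the discrete flow derivatives satisfying $\Phi_{n,k}\asymp e^{-a\alpha(n-k)}$, giving $\bigl(\sum_k a^2e^{-2a\alpha(n-k)}\bigr)^{1/2}(M_2+\Gamma_2)=O(\sqrt a)\cdot O(1/\sqrt c)$. Finally the $O(a)$ Euler error is dominated by $\sqrt{a/c}$ for small $a,c$. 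Collecting the two standing errors into $\max(c^2,\sqrt{a/c})$ gives the claim.

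The main obstacle is the mixing transfer together with the bookkeeping of the power of $c$. Because $H$ is discontinuous in the $X$-variable, a small perturbation of $X$ can flip an indicator and move $H$ by an $O(1)$ amount, so $\gamma_r^n(\tau,H)$ cannot be bounded by a Lipschitz constant times $\gamma_r^n(\tau,X)$; instead one must control the \emph{probability} that such a flip occurs, which is again where the bounded density of Assumption~\ref{smoothness} is indispensable. Securing both the conditional moments $M_r^n(H)$ and the dependence sums $\Gamma_r^n(H)$ with the correct $c$-scaling, uniformly in $\theta$, is the delicate technical core and is exactly what separates this argument from the classical smooth Kiefer--Wolfowitz analysis.
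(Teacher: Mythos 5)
Your overall skeleton (ODE method, exponential stability, bias--noise decomposition, flip-probability heuristics) is in the right spirit, and you correctly identify the mixing transfer as the crux; but the two quantitative claims your error budget rests on are not justified under the stated assumptions, and the paper's proof is built precisely to avoid having to make them. \emph{Bias:} you claim that Assumption \ref{smoothness} smooths $U$ to the point of having a Lipschitz Hessian, so that the central difference has bias $O(c^2)$ per step. A bounded conditional density only makes the indicator contributions to $U$ Lipschitz (this is why Assumption \ref{asp:G} is an assumption, not a consequence); nothing gives second derivatives, so only the $O(c)$ per-step bias from the Lipschitz gradient is available. The paper indeed only ever uses $|G^{i}(\xi^{i}_k)-G^{i}(\vartheta)|\leq L_G c$ (estimation of $\Sigma_3$), and the $c^2$ in Lemma \ref{lem:fixed_gain} arises structurally: the per-step bias $O(ac)$ is accumulated over a comparison window of length $T=c/a$, giving $T\cdot ac=c^2$. \emph{Noise:} your $O(\sqrt{a/c})$ requires $M_2(\eta)+\Gamma_2(\eta)=O(1/\sqrt{c})$. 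The moment half is fine (probability $O(c)$ of an indicator mismatch between $\theta\pm c\mathbf e_i$, spike $O(1/c)$), but the mixing half fails: $\gamma_r(\tau,\eta)$ compares $H(\theta,X_{n+k},c)$ with $H(\theta,X_k^+,c)$, where $X_k^+$ is a conditional expectation given remote information, and the probability that \emph{this} substitution flips an indicator is governed by the mixing decay of $X$ (the $\epsilon_\tau$ trade-off of Lemma \ref{lem:M-Gamma-estimate}), not by $c$; on that event $H$ jumps by $O(1/c)$. Hence $\Gamma_2(\eta)=O(1/c)$, and your single global application of the moment inequality honestly yields $O(\sqrt{a})\cdot O(1/c)=O(\sqrt{a}/c)$, not $O(\sqrt{a/c})$; partial cancellation between the flips at $\theta+c\mathbf e_i$ and $\theta-c\mathbf e_i$ cannot repair this, in particular because $X_k^+$ has no bounded density.

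The missing structural idea is the paper's windowing, which fixes both points at once. The comparison ODE is restarted at the beginning of every window of length $T=c/a$, and on each window the maximal inequality (Theorem \ref{elso}) is applied not to $H$ but to $W=c\cdot(\text{centered }H)$, whose $\tilde M_3+\tilde\Gamma_3$ is $O(1)$ by Lemmas \ref{lem:M-Gamma-estimate} and \ref{wales}, with the singular factor placed in the deterministic integrand $f=a/c$; the $L^2$-norm of $f$ over one window is exactly $\sqrt{(c/a)\cdot a^2/c^2}=\sqrt{a/c}$. The same window length is what keeps the Gronwall factor under control: by Lemma \ref{conli} the conditional Lipschitz constant of $H$ in $\theta$ is $O(1/c)$, so any Gronwall comparison carries $\exp\bigl(O(a(t-s)/c)\bigr)$, which is $e^{O(1)}$ over a window but explodes over the trajectory lengths $n\sim 1/a$ needed to see the $e^{-a\alpha n}$ transient --- exactly the regime in which your single global perturbation expansion with the flow derivatives $\Phi_{n,k}$ is carried out, so that step fails as stated as well. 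The paper then propagates the per-window error $\overline C\max(c^2,\sqrt{a/c})$ across windows by the exponential stability and a telescoping sum (proof of Theorem \ref{thm:fixed_gain}). Without the restart-on-windows device and the $c$-rescaled application of the maximal inequality, your budget cannot reach $\max(c^2,\sqrt{a/c})$ under Assumptions \ref{asp:G}--\ref{asp:mix}.
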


Note that, similarly to the decreasing gain setting, this leads to the best choice being $c=a^{\frac{1}{5}}$.
We know of no other papers where the fixed gain case has been treated. In the case of stochastic
gradients there are many such studies obtaining a rate of $\sqrt{a}$ for
step size $a$, see e.g.\ \cite{chau2016fixed} and the references therein.
\end{subsection}

\end{section}

\begin{section}{Proofs}\label{prof}

The following lemma will play a pivotal role
in our estimates: it establishes the
\emph{conditional} Lipschitz-continuity of
the difference function obtained from $J$.

\begin{lemma}\label{conli} 
Under Assumptions \ref{asp:H-special_form}
and \ref{smoothness},
there is $C_{\flat}>0$ such that, for each $i=1,\ldots,d$ and $c\leq 1$,
$$
\E_{k}|J(\bar{\theta}_{1}+c\mathbf{e}_{i},X_{k+1})
-J(\bar{\theta}_{1}-c\mathbf{e}_{i},X_{k+1})-J(\bar{\theta}_{2}+c\mathbf{e}_{i},X_{k+1})
+J(\bar{\theta}_{2}-c\mathbf{e}_{i},X_{k+1})|\leq C_{\flat}[|\bar{\theta}_{1}-\bar{\theta}_{2}|+c^{2}]
$$
holds for all $k\in\mathbb{N}$ and for all pairs of $\mathcal{F}_{k}$-measurable $\mathbb{R}^{d}$-valued random variables
$\bar{\theta}_{1}$, $\bar{\theta}_{2}$.
\end{lemma}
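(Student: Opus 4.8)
The plan is to exploit the ``smoothing effect of randomness'' already alluded to in the text: although the integrand is genuinely discontinuous in $\theta$, after applying $\E_k$ the bounded conditional density of $X_{k+1}$ averages the jumps against thin sets whose Lebesgue measure is controlled by $|\bar\theta_1-\bar\theta_2|$. Throughout, $\bar\theta_1,\bar\theta_2$ are treated as constants (being $\mathcal F_k$-measurable), and by Assumption \ref{smoothness} the conditional law of $X_{k+1}$ has a density $p_k(\cdot,\omega)\le F$ which, by strong stationarity, is supported in a fixed ball $\{|x|\le K_0\}$. I first split $J(\theta,x)=l_0(\theta)+R(\theta,x)$ with $R:=J-l_0$. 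Because $\cup_x\cup_{i\ge1}A_i(x)\subset B_D$, the remainder $R(\theta,x)$ vanishes for $\theta\notin B_D$, is bounded on $B_D\times\{|x|\le K_0\}$, and is a finite sum of terms $\psi(\theta,x)\mathbb{1}\{\theta\in B(x)\}$ where $\psi$ is Lipschitz on $B_D$ (each $\psi$ is one of the $l_i$ or $l_0\vert_{B_D}$) and $B(x)$ is built from the constraint sets $A_i(x)$. By linearity the second difference in the statement splits into the contribution $D_0$ of $l_0$ and the contribution $D_R$ of $R$.

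For $D_0$ I would use that $l_0$ is $C^2$ with bounded Hessian (Assumption \ref{asp:H-special_form}), so $\nabla l_0$ is globally Lipschitz; write $L''$ for a common bound on $\|\nabla\nabla l_0\|$ and on the Lipschitz constant of $\nabla l_0$. A second-order Taylor expansion at $\bar\theta_1$ and at $\bar\theta_2$ gives
$$
l_0(\bar\theta_j+c\mathbf{e}_i)-l_0(\bar\theta_j-c\mathbf{e}_i)=2c\,\partial_i l_0(\bar\theta_j)+r_j,\qquad |r_j|\le c^2 L'',
$$
so that $D_0=2c\bigl(\partial_i l_0(\bar\theta_1)-\partial_i l_0(\bar\theta_2)\bigr)+(r_1-r_2)$. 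Since $\nabla l_0$ is Lipschitz and $c\le1$, this is bounded by $2L''|\bar\theta_1-\bar\theta_2|+2L''c^2$, which already has the claimed shape; this is precisely where the $c^2$ term originates.

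The heart of the matter is $D_R$, and it suffices to treat one summand $\psi\,\mathbb{1}_B$. Writing $p_j^{\pm}=\bar\theta_j\pm c\mathbf{e}_i$, I would group the four evaluations by the sign of the $c$-shift and add and subtract, using $\psi(p_1^{\pm})\mathbb{1}_B(p_1^{\pm})-\psi(p_2^{\pm})\mathbb{1}_B(p_2^{\pm})=\psi(p_2^{\pm})\bigl(\mathbb{1}_B(p_1^{\pm})-\mathbb{1}_B(p_2^{\pm})\bigr)+\bigl(\psi(p_1^{\pm})-\psi(p_2^{\pm})\bigr)\mathbb{1}_B(p_1^{\pm})$, and summing the $\pm$ contributions. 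The terms of the second type are pointwise $O(|\bar\theta_1-\bar\theta_2|)$ because $\psi$ is Lipschitz and $p_1^{\pm}-p_2^{\pm}=\bar\theta_1-\bar\theta_2$. The terms of the first type are bounded by $\|\psi\|_\infty\,|\mathbb{1}_B(p_1^{\pm})-\mathbb{1}_B(p_2^{\pm})|$ (with $\mathbb{1}_B$ supported in $B_D$, where $\psi$ is bounded), so everything reduces to estimating $\E_k|\mathbb{1}\{p_1^{\pm}\in B(X_{k+1})\}-\mathbb{1}\{p_2^{\pm}\in B(X_{k+1})\}|$. The key structural observation, and the reason the final bound is linear in $|\bar\theta_1-\bar\theta_2|$, is that in this grouping only the shift $\bar\theta_1-\bar\theta_2$ enters the indicator mismatch, the common $c$-shift being harmless.

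To finish, I would observe that $\{x:p\in A_i(x)\}=\bigcap_j\{x\le g_i^j(p)\}\cap\bigcap_{j'}\{x>h_i^{j'}(p)\}$ is a box in $\mathbb{R}^m$ whose faces are the values at $p$ of the Lipschitz functions $g_i^j,h_i^{j'}$. Replacing $p_2^{\pm}$ by $p_1^{\pm}$ moves each face by at most $L|\bar\theta_1-\bar\theta_2|$, so the symmetric difference of the two boxes lies in a union of finitely many coordinate slabs of width $\le L|\bar\theta_1-\bar\theta_2|$. Intersecting with the support $\{|x|\le K_0\}$ bounds the Lebesgue measure of each slab by $(2K_0)^{m-1}L|\bar\theta_1-\bar\theta_2|$, and since $p_k\le F$ this gives $\E_k|\mathbb{1}\{p_1^{\pm}\in B(X_{k+1})\}-\mathbb{1}\{p_2^{\pm}\in B(X_{k+1})\}|\le C'|\bar\theta_1-\bar\theta_2|$ for a constant $C'$ depending only on $F,K_0,m,m_p,m_p'$ and the Lipschitz constants of the interfaces. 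Summing over the finitely many faces, summands of $R$, and indices $i$, and combining with the $D_0$ estimate, yields $\E_k|\cdots|\le C_{\flat}[|\bar\theta_1-\bar\theta_2|+c^2]$. The main obstacle is exactly this indicator mismatch: it is the only place where the discontinuity of $J$ is felt, and it is controlled solely by the bounded conditional density of $X_{k+1}$ together with the Lipschitz regularity of $g_i^j,h_i^{j'}$; absent either ingredient the lemma would fail.
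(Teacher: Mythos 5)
Your proposal is correct in substance but takes a genuinely different route from the paper. The paper keeps $J$ intact and runs a geometric case analysis on the positions of $\bar{\theta}_1,\bar{\theta}_2$ relative to the ball $B_{D+1}$: both inside (Lipschitz terms plus the bounded-density control of the indicator mismatch, exactly your mechanism); the segment $[\bar{\theta}_1,\bar{\theta}_2]$ entirely outside (where $J=l_0$ and the Lagrange mean value theorem produces the $c^2$ term); and three straddling cases, reduced to the first two by telescoping through the intersection points $\kappa_1,\kappa_2$ of the segment with $\partial B_{D+1}$. You instead decompose algebraically, $J=l_0+R$ with $R:=J-l_0$ vanishing for $\theta\notin B_D$, handle $l_0$ globally by a second-order Taylor expansion (legitimate, since Assumption \ref{asp:H-special_form} gives a globally bounded Hessian), and handle $R$ by the Lipschitz-plus-indicator-mismatch argument. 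The two essential mechanisms are the same in both proofs: the Taylor/mean-value control of $l_0$ is the sole source of the $c^2$ term, and the bounded conditional density against thin slabs is the sole source of the linear term. Your decomposition buys freedom from the geometric bookkeeping with $\kappa_1,\kappa_2$; its small price is that $\mathbb{1}_{A_0(x)}=1-\mathbb{1}_{\cup_i A_i(x)}$ must be re-expanded (by inclusion--exclusion, or by the subadditivity bound $|\mathbb{1}_{\cup_i A_i}(p_1)-\mathbb{1}_{\cup_i A_i}(p_2)|\leq\sum_i|\mathbb{1}_{A_i}(p_1)-\mathbb{1}_{A_i}(p_2)|$, the same device the paper uses for products of indicators), a point you describe only loosely (``built from'').

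One corner case in your write-up needs patching. In the splitting $\psi(p_1^{\pm})\mathbb{1}_B(p_1^{\pm})-\psi(p_2^{\pm})\mathbb{1}_B(p_2^{\pm})=\psi(p_2^{\pm})\bigl(\mathbb{1}_B(p_1^{\pm})-\mathbb{1}_B(p_2^{\pm})\bigr)+\bigl(\psi(p_1^{\pm})-\psi(p_2^{\pm})\bigr)\mathbb{1}_B(p_1^{\pm})$ you bound the first term by $\|\psi\|_\infty$ times the indicator mismatch and the second via Lipschitz continuity of $\psi$ ``on $B_D$''. But the indicators only confine \emph{one} of the two points to $B_D$: the coefficient $\psi(p_2^{\pm})$ and the difference $\psi(p_1^{\pm})-\psi(p_2^{\pm})$ involve $p_2^{\pm}$, which may lie far outside $B_D$, where $l_0$ grows quadratically, the $l_i$ grow linearly, and $l_0$ is only locally Lipschitz (Remark \ref{foxes}); as written, both bounds fail there. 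The fix is one line and uses an ingredient you already recorded: since $R$ vanishes for $\theta\notin B_D$ and is bounded on $B_D\times\{|x|\leq K_0\}$, if $|\bar{\theta}_1-\bar{\theta}_2|>1$ then $|D_R|\leq 4\sup|R|\leq 4\sup|R|\,|\bar{\theta}_1-\bar{\theta}_2|$ trivially; while if $|\bar{\theta}_1-\bar{\theta}_2|\leq 1$, every evaluation point appearing in a nonvanishing term lies in $B_{D+1}$, so your estimates hold verbatim with suprema and Lipschitz constants taken over $B_{D+1}$. (This is in effect how the paper localizes too, working on $B_{D+1}$ and $B_{D+2}$.) With this reduction your argument is complete.
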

\begin{proof}
{}
We assume that $m_{s}=1$, $m_{p}=1$, $m_p'=0$. We will shortly refer to the general case later.
We thus assume that $J(\theta,x)=l_{1}(\theta,x)1_{\{x\leq g(\theta)\}}+l_{0}(\theta)1_{A_{0}(x)}$ 
with some Lipschitz-continuous $g,l_{1}$ with Lipschitz-constant $L_{1}$ (for both). Let $K_{1}$ be an upper
bound for $l_{1}$ in $B_{D+2}$. %Fix $i\in\{1,\ldots,d\}$.

{}
Consider first the event $A_{1}:=\{\bar{\theta}_{1},\bar{\theta}_{2}\in B_{D+1}\}$ and
the corresponding indicator $I_1:=1_{A_1}$. Note that on $I_1$ we have
$\bar{\theta}_j\pm c\mathbf{e}_i\in B_{D+2}$, $j=1,2$.
Now estimate 
\begin{eqnarray}\nonumber
& & \E_{k}|I_{1}l_1(\bar{\theta}_{1}+c\mathbf{e}_{i},X_{k+1})1_{\{X_{k+1}\leq 
g(\bar{\theta}_{1}+c\mathbf{e}_{i})\}}
-I_{1}l_1(\bar{\theta}_{2}+c\mathbf{e}_{i},X_{k+1})1_{\{X_{k+1}\leq 
g(\bar{\theta}_{2}+c\mathbf{e}_{i})\}}|\\
%&\leq& E_{k}|I_{1}l_{1}(\bar{\theta}_{1}+\bar{c}^k_i),X_{k+1})1_{X_{k+1}\leq g(\bar{\theta}_{1}+\bar{c}^k_i}
%- I_{1} l_{1}(\bar{\theta}_{2}+\bar{c}^k_i),X_{k+1})1_{X_{k+1}\leq g(\bar{\theta}_{2}+\bar{c}^k_i}|\\
\nonumber &\leq& \E_{k} |I_{1} l_{1}(\bar{\theta}_{1}+c\mathbf{e}_{i},X_{k+1})1_{\{X_{k+1}\leq 
g(\bar{\theta}_{1}+c\mathbf{e}_{i})\}}
-I_{1}l_{1}(\bar{\theta}_{2}+c\mathbf{e}_{i},X_{k+1})1_{\{X_{k+1}\leq g(\bar{\theta}_{1}+c\mathbf{e}_{i})\}}|\\
\nonumber &+& \E_{k}|I_{1}l_{1}(\bar{\theta}_{2}+c\mathbf{e}_{i},X_{k+1})1_{\{
X_{k+1}\leq g(\bar{\theta}_{1}+c\mathbf{e}_{i})\}}
-I_{1} l_{1}(\bar{\theta}_{2}+c\mathbf{e}_{i},X_{k+1})1_{\{X_{k+1}\leq g(\bar{\theta}_{2}+c\mathbf{e}_{i})}\}|\\
\nonumber &\leq& L_{1} \E_{k}|\bar{\theta}_{1}-\bar{\theta}_{2}|+
K_{1} \sum_{j=1}^{m} 
\left[P_{k}(g^{j}(\bar{\theta}_{2}+c\mathbf{e}_{i})<X_{k+1}^{j}\leq g^{j}(\bar{\theta}_{1}+c\mathbf{e}_{i}))
+P_{k}(g^{j}(\bar{\theta}_{1}+c\mathbf{e}_{i})< X^{j}_{k+1}\leq g^{j}(\bar{\theta}_{2}+
c\mathbf{e}_{i}))\right]\\
&\leq& L_{1}|\bar{\theta}_{1}-\bar{\theta}_{2}|+
2mK_{1}L_{1}F|\bar{\theta}_{1}-\bar{\theta}_{2}|.
\label{ketcsicsi}
\end{eqnarray}
In the same way, we also get
\begin{eqnarray*}
\E_{k}|I_{1}l_1(\bar{\theta}_{1}-c\mathbf{e}_{i},X_{k+1})-I_{1}l_1(\bar{\theta}_{2}-c\mathbf{e}_{i},X_{k+1})|
&\leq& L_{1}|\bar{\theta}_{1}-\bar{\theta}_{2}|+2mK_{1}L_{1}F |\bar{\theta}_{1}-\bar{\theta}_{2}|.
\end{eqnarray*}
As $l_0$ is clearly Lipschitz on $B_{D+2}$,
we also have $|I_{1}l_0(\bar{\theta}_{1}\pm c\mathbf{e}_{i},X_{k+1})-I_{1}l_0(\bar{\theta}_{2}\pm c\mathbf{e}_{i},X_{k+1})|=O(|\bar{\theta}_{1}-\bar{\theta}_{2}|)$.

Let $L_2''$ be an upper bound for the second derivative $\nabla\nabla l_{0}$, recall Assumption 
\ref{asp:H-special_form}.
Now let $A_{2}$ be the event that the line from $\bar{\theta}_{1}$ to $\bar{\theta}_{2}$ does not
intersect $B_{D+1}$ at all, let $I_2:=1_{A_2}$. It follows
in particular that neither
$\bar{\theta}_1\pm c\mathbf{e}_i$ nor
$\bar{\theta}_2\pm c\mathbf{e}_i$ fall into 
$B_D$.
Since $J=l_{0}$ outside $B_{D}$ we can write, by the Lagrange mean value theorem,
\begin{eqnarray*}
& & \E_{k}I_{2}|J(\bar{\theta}_{1}+c\mathbf{e}_{i},X_{k+1})-J(\bar{\theta}_{2}+c\mathbf{e}_{i},X_{k+1})-
J(\bar{\theta}_{1}-c\mathbf{e}_{i},X_{k+1})
+J(\bar{\theta}_{2}-c\mathbf{e}_{i},X_{k+1})|\\
&=& 2c\E_{k}I_{2}|\partial_{\theta_i}l_{0}(\xi_{1})-\partial_{\theta_i}l_{0}(\xi_{2})|\\
&\leq& 2c\sup_{u\in\mathbb{R}^{d}}|\nabla
(\partial_{\theta_i}l_{0}(u))|\, \E_{k}|\xi_{1}-\xi_{2}|\\
&\leq& 2cL_2'' \E_{k}|\xi_{1}-\xi_{2}| \\
&\leq& 2cL_2'' [|\bar{\theta}_{1}-\bar{\theta}_{2}|+ 2c] \leq 2L_2''|\bar{\theta}_1-\bar{\theta}_2|+
4c^2 L_2''
\end{eqnarray*}
holds with some random variables $\xi_{j}\in [\bar{\theta}_{j}-c\mathbf{e}_{i},\bar{\theta}_{j}+c\mathbf{e}_{i}]$,
$j=1,2$, remembering our assumptions on $l_{0}$ and $c\leq 1$.

Turning to the event $\Omega\setminus (A_1\cup A_2)$, let us consider the directed straight line from $\bar{\theta}_{1}(\omega)$ to $\bar{\theta}_{2}(\omega)$ and let 
its first intersection point
with the boundary of $B_{D+1}$ be denoted by $\kappa_{1}(\omega)$ and its second intersection point by $\kappa_{2}(\omega)$.
In the case where there is only one intersection point it is denoted by $\kappa_{1}(\omega)$.
Let $I_{3}$ be the indicator of the event that there is only one intersection point ($\kappa_{1}$) with $B_{D+1}$
and that $\bar{\theta}_{1}$ is inside $B_{D+1}$.
The arguments of the previous two cases guarantee that
\begin{eqnarray*}
& & \E_{k}I_{3}|J(\bar{\theta}_{1}+c\mathbf{e}_{i},X_{k+1})-J(\bar{\theta}_{2}+c\mathbf{e}_{i},X_{k+1})-
J(\bar{\theta}_{1}-c\mathbf{e}_{i},X_{k+1})+J(\bar{\theta}_{2}-c\mathbf{e}_{i},X_{k+1})|\\
&\leq & \E_{k}I_{3}|J(\bar{\theta}_{1}+c\mathbf{e}_{i},X_{k+1})-J(\kappa_{1}+c\mathbf{e}_{i},X_{k+1})-
J(\bar{\theta}_{1}-c\mathbf{e}_{i},X_{k+1})+J(\kappa_{1}-c\mathbf{e}_{i},X_{k+1})|\\
&+& \E_{k}I_{3}|J(\kappa_{1}+c\mathbf{e}_{i},X_{k+1})-J(\bar{\theta}_{2}+c\mathbf{e}_{i},X_{k+1})-
J(\kappa_{1}-c\mathbf{e}_{i},X_{k+1})+J(\bar{\theta}_{2}-c\mathbf{e}_{i},X_{k+1})|\\
&=& O(|\bar{\theta}_1-\kappa_1|)+O(|\kappa_1-
\bar{\theta}_2|)+O(c^2)\\
&=& O(|\bar{\theta}_1-
\bar{\theta}_2|)+O(c^2).
\end{eqnarray*}

%L_{1}E_{k}|\bar{\theta}_{1}-\kappa_{1}|+2dK_{1}L_{1}FE_{k}|\bar{\theta}_{1}-\kappa_{1}|\\
%&+& C E_{k}|\kappa_{1}-\bar{\theta}_{2}| + 2C c_{k}^{2}\\
%&\leq& [L_{1}+2dK_{1}L_{1}F3+C]E_{k}|\bar{\theta}_{1}-\bar{\theta}_{2}|+ 2C c_{k}^{2}. 

Similarly, if $I_{4}$ is the indicator of the event where there is one intersection point
and $\bar{\theta}_{2}$ is inside $B_{D+1}$ then we also get
\begin{eqnarray*}
& & \E_{k}I_{4}|J(\bar{\theta}_{1}+c\mathbf{e}_i,X_{k+1})-J(\bar{\theta}_{2}+c\mathbf{e}_i,X_{k+1})-
J(\bar{\theta}_{1}-c\mathbf{e}_i,X_{k+1})+J(\bar{\theta}_{2}-c\mathbf{e}_i,X_{k+1})|=O(|\bar{\theta}_1-
\bar{\theta}_2|+c^2).
\end{eqnarray*}

%\\
%&\leq& [L_{1}+2dK_{1}L_{1}F3+C]E_{k}|\bar{\theta}_{1}-\bar%{\theta}_{2}|+2C c_{k}^{2}.
Let $I_{5}$ denote the indicator of the 
case where both $\bar{\theta}_{1}$, $\bar{\theta}_{2}$ are
outside $B_{D+1}$ and there are two intersection points $\kappa_{1}$, $\kappa_{2}$.
We get, as above,
\begin{eqnarray*}
& & \E_{k}I_{5}|J(\bar{\theta}_{1}+c\mathbf{e}_i,X_{k+1})-J(\bar{\theta}_{2}+c\mathbf{e}_i,X_{k+1})-
J(\bar{\theta}_{1}-c\mathbf{e}_i,X_{k+1})+J(\bar{\theta}_{2}-c\mathbf{e}_i,X_{k+1})|\\
&=& O(|\bar{\theta}_1-\kappa_1|)+
O(|\kappa_1-
\kappa_2|)+O(|\kappa_2-
\bar{\theta}_2|)+O(c^2)\\
&=& O(|\bar{\theta}_1-
\bar{\theta}_2|+c^2).
\end{eqnarray*}
Finally, in the remaining case
(where there is only one intersection point
with $B_{D+1}$ though both 
$\bar{\theta}_{1}$, $\bar{\theta}_{2}$ are
outside $B_{D+1}$) we similarly get
an estimate of the order
$O(|\bar{\theta}_1-
\bar{\theta}_2|+c^2)$ and hence
we eventually obtain the statement
of the lemma.

%&\leq& C E_{k}|\bar{\theta}_{1}-\kappa_{1}|+ 2C c_{k}^{2}+
%L_{1}E_{k}|\kappa_{1}-\kappa_{2}|+2dK_{1}L_{1}FE_{k}|\kappa_{1}-\kappa_{2}|\\
%&+& C E_{k}|\kappa_{2}-\bar{\theta}_{2}|+2C c_{k}^{2}\\
%&\leq& [2C+L_{1}+2dK_{1}L_{1}F]E_{k}|\bar{\theta}_{1}-\bar{\theta}_{2}|+4C c_{k}^{2}. 

When $m_p=0$ and $m_p'=1$, the same ideas work.
When $m_p+m_p'>1$ we can rely on the following
elementary observation:
$$\left| \prod_{j=1}^{m_p} \mathbb{1}_{\{X_{k+1}\leq g^j(\bar{\theta}_1+c\mathbf{e}_i)\}}-  
\prod_{j=1}^{m_p} \mathbb{1}_{\{X_{k+1}\leq g^j(\bar{\theta}_2+
c\mathbf{e}_i)\}}  \right| \leq  
\sum_{j=1}^{m_p} \left| \mathbb{1}_{\{X_{k+1}\leq g^j(\bar{\theta}_1+c\mathbf{e}_i)\}}-  \mathbb{1}_{\{X_{k+1}\leq 
g^j(\bar{\theta}_2+c\mathbf{e}_i)\}}  \right|,{}
$$
and its counterpart for the $h^j$.
Estimates can be repeated for each summand in the
definition of $J$ so the case $m_s>1$ follows, too.
\end{proof}

The arguments of the previous lemma, \eqref{ketcsicsi} in particular, also
give us the following:

\begin{lemma}\label{colis} 
Under Assumptions \ref{asp:H-special_form} and
\ref{smoothness}, there is $C_{\natural}>0$ such that, for each $i=1,\ldots,d$,
$$
\E_{k}|J(\bar{\theta}+c\mathbf{e}_{i},X_{k+1})
-J(\bar{\theta}-c\mathbf{e}_{i},X_{k+1})|
\leq C_{\natural}c,\quad 0<c\leq 1,
$$
holds for all $k\in\mathbb{N}$ and for all $\mathcal{F}_{k}$-measurable $B_{D+1}$-valued random variables $\bar{\theta}$.\hfill $\Box$
\end{lemma}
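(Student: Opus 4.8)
The plan is to prove Lemma \ref{colis} as a direct consequence of the decomposition already developed in Lemma \ref{conli}, rather than starting from scratch. The key observation is that Lemma \ref{colis} is the ``single-point'' version of the two-point estimate in Lemma \ref{conli}: instead of controlling how the difference quotient varies between $\bar\theta_1$ and $\bar\theta_2$, we only need to bound the size of the difference $J(\bar\theta+c\mathbf{e}_i,X_{k+1})-J(\bar\theta-c\mathbf{e}_i,X_{k+1})$ for a single $B_{D+1}$-valued $\bar\theta$. Since $\bar\theta$ is assumed to lie in $B_{D+1}$, we have $\bar\theta\pm c\mathbf{e}_i\in B_{D+2}$ for $c\le 1$, so we are always in the ``nice'' regime corresponding to the indicator $I_1$ in the proof of Lemma \ref{conli}; the unbounded/quadratic cases $I_2,\ldots,I_5$ never arise.

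First I would reduce to the model case $m_s=1$, $m_p=1$, $m_p'=0$, writing $J(\theta,x)=l_1(\theta,x)\1_{\{x\le g(\theta)\}}+l_0(\theta)\1_{A_0(x)}$, exactly as in the proof of Lemma \ref{conli}, and handle the general case at the end by the same product-of-indicators bound and summation over the $m_s$ terms. Next I would split the target quantity into the $l_1$-part and the $l_0$-part. For the $l_0$-part, since $l_0$ is Lipschitz on $B_{D+2}$, a first-order expansion (or just the Lipschitz bound) gives $|l_0(\bar\theta+c\mathbf{e}_i)-l_0(\bar\theta-c\mathbf{e}_i)|=O(c)$ pointwise, which is of the claimed order. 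For the $l_1$-part, I would expand $l_1(\bar\theta+c\mathbf{e}_i,X_{k+1})\1_{\{X_{k+1}\le g(\bar\theta+c\mathbf{e}_i)\}}-l_1(\bar\theta-c\mathbf{e}_i,X_{k+1})\1_{\{X_{k+1}\le g(\bar\theta-c\mathbf{e}_i)\}}$ by adding and subtracting $l_1(\bar\theta-c\mathbf{e}_i,X_{k+1})\1_{\{X_{k+1}\le g(\bar\theta+c\mathbf{e}_i)\}}$, mirroring the telescoping step in \eqref{ketcsicsi}.

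The two resulting terms are controlled precisely as in \eqref{ketcsicsi}: the first uses Lipschitz-continuity of $l_1$ in its first argument, giving $L_1|(\bar\theta+c\mathbf{e}_i)-(\bar\theta-c\mathbf{e}_i)|=2L_1 c$; the second uses the bound $K_1$ on $l_1$ over $B_{D+2}$ together with the density bound from Assumption \ref{smoothness}, so that the conditional probability $P_k$ of $X_{k+1}$ falling between $g(\bar\theta-c\mathbf{e}_i)$ and $g(\bar\theta+c\mathbf{e}_i)$ is at most $F$ times the Lebesgue measure of that slab, which is $O(L_1\cdot 2c)$ by the Lipschitz property of $g$. Collecting the constants yields a bound of the form $(2L_1+2mK_1L_1F+O(1))c=C_{\natural}c$, which is exactly the claim.

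The only genuine subtlety — the main obstacle, such as it is — is making sure the argument stays uniformly inside $B_{D+2}$ so that the Lipschitz and boundedness constants of $l_1,l_0,g$ apply; this is guaranteed by the hypothesis $\bar\theta\in B_{D+1}$ and $c\le 1$, which is why the lemma restricts to $B_{D+1}$-valued $\bar\theta$ rather than arbitrary ones. Because $\bar\theta$ is $\mathcal{F}_k$-measurable, all the conditional-expectation manipulations and the density bound of Assumption \ref{smoothness} go through verbatim, treating $\bar\theta$ as ``frozen'' under $\E_k$. I expect no new estimates beyond those already assembled for Lemma \ref{conli}; the proof is essentially a single application of the $I_1$ computation, which is why the statement can be asserted with a one-line pointer back to \eqref{ketcsicsi}.
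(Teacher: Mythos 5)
Your proposal is correct and is essentially the paper's own argument: the paper ``proves'' Lemma \ref{colis} by simply pointing back to the arguments of Lemma \ref{conli}, in particular to \eqref{ketcsicsi}, which is exactly the single-point version of the $I_1$-computation (telescoping, Lipschitz bounds on $l_1$ and $g$, boundedness on $B_{D+2}$, and the density bound of Assumption \ref{smoothness}) that you carry out, noting as you do that $\bar{\theta}\in B_{D+1}$ and $c\leq 1$ keep everything inside $B_{D+2}$ so the cases $I_2,\ldots,I_5$ never arise. One cosmetic point: the $l_0$-part of $J$ also carries the indicator $\1_{A_0(X_{k+1})}$, so besides the Lipschitz bound $|l_0(\bar{\theta}+c\mathbf{e}_i)-l_0(\bar{\theta}-c\mathbf{e}_i)|=O(c)$ it produces an indicator-difference term as well, but this is controlled by the very same density estimate you apply to the $l_1$-part (with $K_1$ replaced by a bound for $|l_0|$ on $B_{D+2}$), so nothing new is needed.
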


\begin{subsection}{Moment estimates}
 
 %We proceed similarly to \cite{gerencser1992rate}, \cite{gerencser1999convergence} and estimate $\vert \theta_t - \overline{y_t}\vert$ first.
 
In this subsection, we will prove that the first moments of our iteration scheme remain bounded.
This will be followed by other moment estimates. We start with a preliminary lemma on deterministic sequences.

\begin{lemma}\label{sequence}
Let $x_{k}\geq 0$, $k\in\mathbb{N}$ be a sequence, let $\zeta_{k}>0$, $k\geq 1$
be another sequence. If they satisfy $\nu\zeta_k<1$, $k\geq 1$ and
$$
x_{k}\leq (1-\nu\zeta_{k})x_{k-1}+\underline{c}\zeta_{k},\ k\geq 1,
$$	
with some $\underline{c},\nu>0$ then
$$
\sup_{k\in\mathbb{N}}x_{k}\leq x_{0}+ \frac{\underline{c}}{\nu}. 
$$
\end{lemma}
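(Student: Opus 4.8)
The plan is to prove the uniform bound by a single induction on $k$, guessing that the constant $M := x_0 + \underline{c}/\nu$ is the correct invariant. The guess is natural: the affine one-step map $y \mapsto (1-\nu\zeta_k)y + \underline{c}\zeta_k$ has the $\zeta_k$-independent fixed point $\underline{c}/\nu$, so $\underline{c}/\nu$ is where the iteration is ``pulled'' regardless of the step sizes, while the extra slack $x_0$ is there to absorb a possibly large initial value. I therefore expect to show $x_k \leq M$ for every $k$ and then take the supremum.

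First I would dispose of the base case, which is immediate: $x_0 \leq x_0 + \underline{c}/\nu = M$ since $\underline{c},\nu>0$. For the inductive step I assume $x_{k-1}\leq M$ and feed this into the recursion. The crucial preliminary observation is that the hypothesis $\nu\zeta_k<1$ together with $\zeta_k>0$ forces $0\leq 1-\nu\zeta_k<1$; this nonnegativity of the coefficient is exactly what permits the monotone substitution of the inductive hypothesis, giving $x_k \leq (1-\nu\zeta_k)M + \underline{c}\zeta_k$. It then remains to check that $M$ is mapped below itself, i.e. $(1-\nu\zeta_k)M + \underline{c}\zeta_k \leq M$. Cancelling $M$ and dividing through by $\zeta_k>0$, this reduces to $\underline{c} \leq \nu M = \nu x_0 + \underline{c}$, which holds because $x_0\geq 0$. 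This closes the induction, whence $\sup_{k} x_k \leq M = x_0 + \underline{c}/\nu$.

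I do not anticipate any genuine obstacle here, as the statement is an elementary Gr\"onwall-type stability estimate for deterministic recursions. The single point that needs care is the sign of $1-\nu\zeta_k$: replacing $x_{k-1}$ by its upper bound $M$ inside the recursion is legitimate only because the coefficient lies in $[0,1)$, and this is guaranteed precisely by the assumption $\nu\zeta_k<1$. Everything else is routine algebra verifying that the candidate bound $M$ is a fixed invariant of the iteration.
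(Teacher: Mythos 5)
Your proof is correct, but it takes a different route from the paper's. The paper follows the argument of Lemma 1 in the reference \cite{eric}: it first unrolls the recursion into the explicit bound
\begin{equation*}
x_{k}\leq \prod_{i=1}^k (1-\nu\zeta_{i})\, x_{0} + \underline{c}\sum_{i=1}^{k} \zeta_{i}\prod_{j=i+1}^k (1-\nu\zeta_{j}),
\end{equation*}
and then bounds the sum by rewriting each term as a difference, $\zeta_i\prod_{j=i+1}^k(1-\nu\zeta_j)=\frac{1}{\nu}\bigl(\prod_{j=i+1}^{k}(1-\nu \zeta_{j})-\prod_{j=i}^{k}(1-\nu \zeta_{j})\bigr)$, so that the sum telescopes to at most $1/\nu$. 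You instead run a one-step induction with the invariant $M=x_0+\underline{c}/\nu$, verifying that $M$ is mapped below itself by each affine map $y\mapsto(1-\nu\zeta_k)y+\underline{c}\zeta_k$. Your argument is shorter and more self-contained; it isolates exactly the one point where the hypothesis $\nu\zeta_k<1$ is needed (nonnegativity of the coefficient, so that the inductive bound may be substituted monotonically) --- a point the paper also uses, silently, both when dropping the product in front of $x_0$ and when asserting the telescoped sum is at most $1/\nu$. What the paper's unrolled form buys in exchange is the explicit decomposition of $x_k$ into a decaying initial-condition term and a forcing term, which exhibits not just boundedness but how the influence of $x_0$ is damped by the factor $\prod_{i=1}^k(1-\nu\zeta_i)$; that extra structure is the kind of information one often wants in convergence-rate arguments, though for the stated supremum bound your invariance argument suffices.
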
	
\begin{proof}
Following the argument of Lemma 1 in \cite{eric}, 
we notice that
$$
x_{k}\leq \prod_{i=1}^k (1-\nu\zeta_{i}) x_{0} + \underline{c}
\sum_{i=1}^{k} \zeta_{i}\prod_{j=i+1}^k (1-\nu\zeta_{j}),
$$
where an empty product is meant to be $1$. %Since $e^{-\nu t}=1-\nu\int_{0}^{t} e^{-\nu s}\, ds$,
We can write
\begin{eqnarray*}
& & \sum_{i=1}^{k} \zeta_{i}\prod_{j=i+1}^k (1-\nu\zeta_{j})\\
&=& \frac{1}{\nu}\sum_{i=1}^{k}\left({}
\prod_{j=i+1}^{k}(1-\nu \zeta_{j})-\prod_{j=i}^{k}(1-\nu \zeta_{j})\right){}
\\
&\leq& \frac{1}{\nu}.
\end{eqnarray*}	
This shows the claim.
\end{proof}

%\begin{lemma}\label{sequence1}
%Let $x_{k}\in\mathbb{R}^{d}$, $k\in\mathbb{N}$ be a sequence, let $\zeta_{k}>0$, $k\geq 1$
%be a nonincreasing sequence. If they satisfy
%$$
%|x_{k}|\leq e^{-\nu\zeta_{k}}|x_{k-1}|+c\zeta_{k},\ k\geq 1,
%$$	
%with some $c,\nu>0$ then
%$$
%\sup_{k\in\mathbb{N}}|x_{k}|\leq |x_{0}|+ \frac{ce^{\nu\zeta_{1}}}{\nu}. 
%$$
%\end{lemma}	
%\begin{proof}
%Following the argument of Lemma 1 in \cite{eric}, we notice that
%$$
%|x_{k}|\leq e^{-\nu\sum_{i=1}^{k}\zeta_{i}}|x_{0}| + c\sum_{i=1}^{k} \zeta_{i}e^{-\nu\sum_{j=i+1}^{k}\zeta_{j}}
%$$
%where an empty sum is meant to be $0$. Since $e^{-\nu t}=1-\nu\int_{0}^{t} e^{-\nu s}\, ds$,
%we can write
%\begin{eqnarray*}
%& & \sum_{i=1}^{k} \zeta_{i}e^{-\nu\sum_{j=i+1}^{k}\zeta_{j}} \leq 
%\sum_{i=1}^{k} \zeta_{i}\prod_{j=i+1}^{k}(1-\nu e^{-\nu\zeta_{1}}\zeta_{j})\\
%&=& \frac{e^{\nu\zeta_{1}}}{\nu}\sum_{i=1}^{k}\left({}
%\prod_{j=i+1}^{k}(1-\nu e^{-\nu\zeta_{1}}\zeta_{j})-\prod_{j=i}^{k}(1-\nu e^{-\nu\zeta_{1}}\zeta_{j})\right){}
%\\
%&\leq& \frac{e^{\nu\zeta_{1}}}{\nu},
%\end{eqnarray*}	
%where the empty product is meant to be $1$. This shows the %claim.
%\end{proof}

Certain calculations are easier to carry out if we consider the continuous-time embedding of the discrete time 
processes. Consider the 
following 
extension ${\theta}_t$,
$t\in\mathbb{R}_+$ of $\theta_k$, $k\in\mathbb{N}$: let 
$$
{\theta}_t:=\theta_{k}+\int_{k}^{t}a_{u}H(u,\theta_{k})\, du
$$
%${\theta}_t:=\theta_k$
for all $k\in\mathbb{N}$ and for all $k\leq t<k+1$, where
$H(u,\theta)=H(\theta,X_{k+1},c_k)$  for all 
$k\in\mathbb{N}$
and for all $k\leq u<k+1$, $c_u=c_k$ and $a_{u}=\lambda_{0}/\max\{u,1\}$, $u\geq 0$. Extend the filtration
to continuous time by $\mathcal{F}_t:=\mathcal{F}_{\lceil t\rceil}$,
$t\in\mathbb{R}_+$.
Now fix $\mu>1$. We introduce an auxiliary
process that will play a crucial role
in later estimates. For each $n\geq 1$
and for $\lceil n^{\mu}\rceil\leq t<\lceil (n+1)^{\mu}\rceil$ define 
$\overline{y}_t:=y(t,\lceil n^{\mu}\rceil,\theta_{\lceil n^{\mu}\rceil})$, 
 i.e. the solution of \eqref{ode_dec_gain} starting at $\lceil n^{\mu}\rceil$ with initial condition 
 $\overline{y}_{\lceil n^{\mu}\rceil}=\theta_{\lceil n^{\mu}\rceil}$. 
 
We introduce the $L^1$-norm
$$
\Vert Z\Vert_1:=\E|Z|,
$$
for each $\mathbb{R}^d$-valued
random variable $Z$.

\begin{lemma}\label{momentus} 
Under Assumptions \ref{asp:H-special_form} and
\ref{smoothness}, we have
$$
\sup_{t\geq 1}\|\overline{y}_{t}\|_{1}+
\sup_{t\geq 1}
E\|\theta_{t}\|_{1}<\infty.
$$
\end{lemma}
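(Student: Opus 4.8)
The plan is to first show that $\sup_{k\in\N}\E|\theta_k|^2<\infty$ by a Lyapunov (drift) argument with $V(\theta):=|\theta|^2$, then to deduce by Jensen the $L^1$-bound on the discrete iterates, and finally to transfer the bound to the continuous-time embedding $\theta_t$ and to the ODE-process $\overline y_t$. Writing $H_k:=H(\theta_k,X_{k+1},c_k)$ and taking $\E_k$ of the exact identity $V(\theta_{k+1})=V(\theta_k)+2\lambda_k\langle\theta_k,H_k\rangle+\lambda_k^2|H_k|^2$ gives
\[
\E_k V(\theta_{k+1})=V(\theta_k)+2\lambda_k\langle\theta_k,\E_k H_k\rangle+\lambda_k^2\,\E_k|H_k|^2 ,
\]
so that the whole argument reduces to two estimates with constants uniform in $k$: a mean-reversion bound $2\langle\theta_k,\E_k H_k\rangle\le -2\nu V(\theta_k)+C_1$ for the drift, and a growth bound $\E_k|H_k|^2\le C_2(1+V(\theta_k)+c_k^{-2})$ for the second-order term. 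For both I would split according to whether $\theta_k$ lies in $B_{D+1}$ or not.

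If $\theta_k\notin B_{D+1}$ then, since $c_k\le1$, the perturbed points $\theta_k\pm c_k\mathbf e_i$ stay outside $B_D$, where $J=l_0$ by Assumption \ref{asp:H-special_form} (recall $\cup_x\cup_i A_i(x)\subset B_D$); hence $H_k$ is the \emph{deterministic} central difference of $l_0$ and $\E_k H_k=\sum_i\frac{l_0(\theta_k+c_k\mathbf e_i)-l_0(\theta_k-c_k\mathbf e_i)}{2c_k}\mathbf e_i$. The mean value theorem with the Hessian bound $\nabla\nabla l_0\le L_2''I$ shows this equals $G(\theta_k)+r_k$ with $|r_k|=O(c_k)$, while the identity $\nabla l_0(\theta)=\nabla l_0(0)+\int_0^1\nabla\nabla l_0(s\theta)\theta\,ds$ gives $\langle\theta,\nabla l_0(\theta)\rangle\le L_2''|\theta|^2+|\nabla l_0(0)|\,|\theta|\le-\nu|\theta|^2$ once $|\theta|$ is large, using $L_2''<0$ (which holds since $U$, being quadratic with a maximum near infinity, is strictly concave there). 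The error $\langle\theta_k,r_k\rangle$ is absorbed by Young's inequality and $c_k\le1$. This yields the mean-reversion bound on $\{|\theta_k|\ge R^*\}$ for a suitable $R^*\ge D+1$, and the same computation bounds $|\E_k H_k|$ and $|H_k|$ by $O(1+|\theta_k|)$ there (linear growth of $\nabla l_0$, Remark \ref{foxes}), giving the growth bound.

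The delicate region, and the main obstacle, is $\theta_k\in B_{D+1}$: here $\theta_k\pm c_k\mathbf e_i$ may straddle a discontinuity surface of $J(\cdot,X_{k+1})$, so a single difference quotient is only $O(1/c_k)$ and would destroy the recursion. This is exactly where the \emph{conditional} smoothing of Lemma \ref{colis} enters: applied to the $\mathcal F_k$-measurable, $B_{D+1}$-valued variable $\theta_k\1_{\{\theta_k\in B_{D+1}\}}$ it gives $\E_k|J(\theta_k+c_k\mathbf e_i,X_{k+1})-J(\theta_k-c_k\mathbf e_i,X_{k+1})|\le C_\natural c_k$ on $\{\theta_k\in B_{D+1}\}$, whence $|\E_k H_k|\le C_\natural\sqrt d/2$ uniformly, despite the discontinuities. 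Since $V(\theta_k)$ is bounded on $B_{D+1}$, both estimates follow there (the growth bound being the crude $\E_k|H_k|^2\le C c_k^{-2}$ from boundedness of $J$ on $B_{D+2}$ and of $X_{k+1}$). Combining the two regions and absorbing bounded contributions into the constants gives
\[
\E_k V(\theta_{k+1})\le(1-2\nu\lambda_k+C_2\lambda_k^2)V(\theta_k)+C_1\lambda_k+C_2\lambda_k^2+C_2\lambda_k^2c_k^{-2}.
\]
For $k$ large, $\lambda_k\sim\lambda_0/k$ and $c_k=c_0k^{-\gamma}$ with $\gamma<1/3<1/2$ force $C_2\lambda_k^2\le\nu\lambda_k$ and $\lambda_k^2c_k^{-2}=O(\lambda_k)$ (as $\lambda_k\le c_k^2$ eventually), so $\E_k V(\theta_{k+1})\le(1-\nu\lambda_k)V(\theta_k)+\underline c\lambda_k$; taking expectations and applying Lemma \ref{sequence}, after checking by an elementary induction from $|H_k|\le C(1+|\theta_k|+c_k^{-1})$ that each $\E V(\theta_k)$ is finite and treating the finitely many small $k$ separately, yields $\sup_k\E|\theta_k|^2<\infty$, hence $\sup_k\E|\theta_k|<\infty$ by Jensen.

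Finally I would transfer these bounds. For the embedding, $|\theta_t|\le|\theta_k|+\lambda_k|H_k|$ for $k\le t<k+1$, and $\E|H_k|\le(\E|H_k|^2)^{1/2}=O(1+c_k^{-1})$ together with the boundedness of $\lambda_k/c_k\sim k^{\gamma-1}$ gives $\sup_t\E|\theta_t|<\infty$. For $\overline y_t$, the dissipativity of the true drift $\langle y,G(y)\rangle\le-\nu|y|^2+C_1'$, inserted into $\frac{d}{d\tau}|y_\tau|^2=\frac{2\lambda_0}{\tau}\langle y_\tau,G(y_\tau)\rangle\le\frac{2\lambda_0}{\tau}(C_1'-\nu|y_\tau|^2)$, shows by comparison at the level $z^*=C_1'/\nu$ the pathwise bound $|y(t,s,\xi)|\le\max(|\xi|,\sqrt{C_1'/\nu})$ for all $0<s\le t$; taking $s=\lceil n^\mu\rceil$ and $\xi=\theta_{\lceil n^\mu\rceil}$ yields $\E|\overline y_t|\le\E|\theta_{\lceil n^\mu\rceil}|+\sqrt{C_1'/\nu}\le\sup_k\E|\theta_k|+\sqrt{C_1'/\nu}<\infty$ uniformly in $t$. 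I expect the principal difficulty to be precisely the discontinuity-induced $O(1/c_k)$ size of the gradient estimator near the jump surfaces of $J$, tamed only after conditioning via Lemma \ref{colis}.
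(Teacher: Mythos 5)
Your proof is correct, but it takes a genuinely different route from the paper's, so it is worth contrasting the two. The paper never squares anything: it works directly with first moments. It introduces the deterministic central difference $D_k:=\sum_j\frac{l_0(\theta_k+c_k\mathbf{e}_j)-l_0(\theta_k-c_k\mathbf{e}_j)}{2c_k}\mathbf{e}_j$, proves the \emph{uniform} bound $\Vert H(\theta,X_{k+1},c_k)-D_k\Vert_1\leq\bar C$ for all $\theta\in\mathbb{R}^d$ (display \eqref{subdo}: exact equality outside $B_{D+1}$, Lemma \ref{colis} plus Lipschitzness of the $l_0$-difference inside), and then uses the mean value theorem together with the contraction property $|\theta-a\nabla l_0(\theta)|\leq(1-A'a)|\theta|+aB'$ to obtain $\Vert\theta_{k+1}\Vert_1\leq(1-A'\lambda_k)\Vert\theta_k\Vert_1+\underline{c}\lambda_k$, concluding by Lemma \ref{sequence}. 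Your route instead runs a drift inequality for $V(\theta)=|\theta|^2$ with the same space-splitting at $B_{D+1}$; the price is the quadratic term $\lambda_k^2\E_k|H_k|^2$, which inside $B_{D+1}$ is only $O(\lambda_k^2c_k^{-2})$, so you must invoke the step-size coupling $\lambda_kc_k^{-2}=O(1)$ (true here since $\gamma<1/3<1/2$ under Assumption \ref{ac}), a relation the paper's $L^1$ bookkeeping never needs beyond $c_k\leq1$. In exchange you get the stronger conclusion $\sup_k\E|\theta_k|^2<\infty$ and a more transparent mean-reversion mechanism. The treatments of $\overline{y}$ also genuinely differ: the paper invokes the stability Assumption \ref{asp:stability}, writing $|\overline{y}_t-\theta^*|\leq C^*|\theta_{\lceil n^{\mu}\rceil}-\theta^*|$ (even though that assumption is not among the lemma's stated hypotheses), whereas your dissipativity-plus-comparison bound $|y(t,s,\xi)|\leq\max\left(|\xi|,\sqrt{C_1'/\nu}\right)$ is self-contained, needing only concavity of $l_0$ outside $B_D$ and continuity of $G$; this is arguably a cleaner closure of that step. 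Finally, on signs: you correctly observe that consistency of the ascent recursion \eqref{recursive_scheme} with a unique maximum forces $L_2''<0$ ($l_0$ strongly concave), while the paper's own proof speaks of ``strong convexity'' and bounds $\Vert\theta_k-\lambda_kD_k\Vert_1$ rather than $\Vert\theta_k+\lambda_kD_k\Vert_1$; the two drift arguments are mirror images under a global sign flip, and your version is the one aligned with the maximization setting as stated.
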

\begin{proof}
Note that $2c_{k}H^{j}(\theta,x,c_{k})=
l_{0}(\theta+c_{k}\mathbf{e}_{j})-l_{0}(\theta-c_{k}\mathbf{e}_{j})$, for all $x$, $j=1,\ldots,d$  when $\theta\notin B_{D+1}$. 
Furthermore, the function $l_{0}(\theta+c_{k}\mathbf{e}_{j})-l_{0}(\theta-c_{k}\mathbf{e}_{j})$ is
Lipschitz on $B_{D+1}$ which, together with
Lemma \ref{colis} implies
\begin{equation}\label{subdo}
\left\Vert\frac{l_{0}(\theta+c_{k}\mathbf{e}_{j})-l_{0}(\theta-c_{k}\mathbf{e}_{j})}{2c_{k}}-H^{j}(\theta,X_{k+1},c_{k})\right\Vert_1\leq \bar{C},\ \theta\in\mathbb{R}^d,
\end{equation}
for a fixed constant $\bar{C}$.
%Notice also that 
%$$
%\frac{l_{0}(\theta+c_{k}\mathbf{e}_{j})-l_{0}(\theta-c_{k}%\mathbf{e}_{j})}{2c_{k}}=2A\theta^{j},
%$$
Clearly,
\begin{eqnarray*}
||\theta_{k+1}||_{1} &\leq&  \left\|\theta_{k}-\frac{\lambda_{k}}{2c_{k}} \sum_{j=1}^{d}
[l_{0}(\theta_{k}+c_{k}\mathbf{e}_{j})-l_{0}(\theta_{k}-c_{k}\mathbf{e}_{j})]\mathbf{e}_{j}\right\|_{1}\\
&+& 
\left\|\frac{\lambda_{k}}{2c_{k}} \sum_{j=1}^{d}
[l_{0}(\theta_{k}+c_{k}\mathbf{e}_{j})-l_{0}(\theta_{k}-c_{k}\mathbf{e}_{j})]\mathbf{e}_{j}-
\lambda_{k}H(\theta_{k},X_{k+1},c_{k})\right\|_{1}.
\end{eqnarray*}
Note that, by Assumption \ref{asp:H-special_form},
$l_0$ is strongly convex, in particular,
$$
\langle \nabla l_0(\theta)-\nabla l_0(0),\theta\rangle\geq A_0|\theta|^2,\ \theta\in\mathbb{R}^d
$$
for all $\theta$, with some $A_0>0$. Hence also
$$
\langle \nabla l_0(\theta),\theta\rangle\geq A|\theta|^2-B,
\ \theta\in\mathbb{R}^d
$$
for suitable $A,B>0$. But then for all $a>0$
small enough,
$$
|\theta-a\nabla l_0(\theta)|\leq (1-A'a)|\theta|+a B',\ \theta\in\mathbb{R}^d
$$
for suitable $A',B'>0$.
By the mean value theorem, 
$$
l_{0}(\theta_{k}+c_{k}\mathbf{e}_{j})-
l_{0}(\theta_{k}-c_{k}\mathbf{e}_{j})=
2c_k \partial_j l_0(\xi_j)
$$
for some  random variable $\xi_j\in 
[\theta_{k}-c_{k}\mathbf{e}_{j},\theta_{k}+c_{k}\mathbf{e}_{j}]$.
Since $\nabla l_0$ is Lipschitz,
$$
\max_j\left\|\nabla l_0(\theta_k)-\nabla l_0(\xi_j)\right\|_{1}\leq L'
$$
for some $L'>0$.
It follows then easily that, for $k\geq k_0$ large enough such
that $\lambda_k$ is small enough, 
\begin{eqnarray*}
& & \left\|\theta_{k}-\frac{\lambda_{k}}{2c_{k}} \sum_{j=1}^{d}
[l_{0}(\theta_{k}+c_{k}\mathbf{e}_{j})-l_{0}(\theta_{k}-c_{k}\mathbf{e}_{j})]\mathbf{e}_{j}\right\|_{1}\\
&\leq& \lambda_k dL'+
||\theta_k-\lambda_k\nabla l_0(\theta_k)||_1\\
&\leq& \lambda_k (B'+dL')+(1-A' \lambda_k)||\theta_k||_1
\end{eqnarray*}
holds.
By \eqref{subdo},
\begin{eqnarray*}
\left\|\frac{\lambda_{k}}{2c_{k}} \sum_{j=1}^{d}
[l_{0}(\theta_{k}+c_{k}\mathbf{e}_{j})-l_{0}(\theta_{k}-c_{k}\mathbf{e}_{j})]\mathbf{e}_{j}-
\lambda_{k}H(\theta_{k},X_{k+1},c_{k})\right\|_{1}\leq 
\lambda_{k}d\bar{C}.
\end{eqnarray*}

Apply Lemma \ref{sequence} with the choice $x_{k}:=||\theta_{k}||_{1}$, 
$\underline{c}:=d (L'+\bar{C})+B'$ and $\zeta_{k}:=\lambda_{k}$, $\nu:=A'$ to obtain that $\sup_{k\geq k_0}\|\theta_{k}\|_{1}<\infty$.
Then trivially also $\sup_{n\in\mathbb{N}}\|\theta_{n}\|_{1}<\infty$ holds, which easily implies $\sup_{t\geq 1}\|\theta_{t}\|_{1}<\infty$ as well. 

Now turning to $\overline{y}_{t}$
we see that, for $n\geq 1$ and
$\lceil n^{\mu}\rceil\leq t<\lceil (n+1)^{\mu}\rceil$, 
\begin{eqnarray*}& & |\overline{y}_{t}-\theta^{*}|\\
&=& |\overline{y}_{t}-y(t,\lceil n^{\mu}\rceil,\theta^{*})|\\
&\leq& |\theta_{\lceil n^{\mu}\rceil}-\theta^{*}|
C^{*},
\end{eqnarray*}
finishing the proof.
%It remains to notice that
%$\E\max_{t\in [0,1]}|\overline{y}_t|<\infty$ for
%trivial reasons so we can conclude.
\end{proof}

%It is interesting to note that the above lemma seems false for $L^p$ moments with $p>1$. 

%\begin{corollary}\label{momentums} 
%For each $\delta,\kappa>0$ there is $C_{\delta,\kappa}$
%such that
%$$
%\E[\max_{t\in [0,n]}
%|\overline{y}_{t}|^\delta]\leq C_{\delta,\kappa}n^{\kappa},\ n\in\mathbb{N}.
%$$
%\end{corollary}
%\begin{proof} We may and will assume $\kappa\leq 1$. Let $k$ be integer first.
%Choose $\alpha':=\alpha/\kappa$. Notice that
%\begin{eqnarray*} & &
%\E\max_{0\leq k\leq n }|\theta_{k}|^{\alpha}\\
%&\leq& 	\E^{\alpha/\alpha'}\max_{0\leq k\leq n }|\theta_{k}|^{\alpha'}\\
%&\leq& \E^{\alpha/\alpha'}
%\left[\sum_{k=0}^{n}|\theta_{k}|^{\alpha'}\right]\\
%&=& ((n+1)\sup_{l\in\mathbb{N}}E[|\theta_{l}|^{\alpha'}])^{\alpha/\alpha'}.
%\end{eqnarray*}	
%Noting the end of the proof of Lemma \ref{momentus}, the statement follows  easily.	
%\end{proof}

\begin{lemma} \label{lem:quad_growth} Let Assumptions \ref{asp:H-special_form}
and \ref{smoothness} hold. Then there exists $C_l>0$ such that $\sup_{k\geq 1}|J(\theta,X_k)|\leq 
C_l(1+\vert\theta\vert^2)$.
\end{lemma}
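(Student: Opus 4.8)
The plan is to partition $\mathbb{R}^d$ into the region outside $B_D$, where the containment hypothesis collapses $J$ onto the smooth function $l_0$, and the compact region $B_D$, where $J$ is merely a bounded combination of Lipschitz pieces; the two-sided Hessian bound on $l_0$ supplies exactly the quadratic growth we need.

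First I would treat $\theta\notin B_D$. Since Assumption \ref{asp:H-special_form} gives $\cup_{x}\cup_{i=1}^{m_s}A_i(x)\subset B_D$, any such $\theta$ lies in no $A_i(x)$ with $i\geq 1$, for any $x$; hence $\theta\in A_0(x)$ and $J(\theta,x)=l_0(\theta)$ identically in $x$. To control $l_0$ I would use the bound $L_1''I\leq\nabla\nabla l_0\leq L_2''I$: a second-order Taylor expansion about the origin yields $|l_0(\theta)|\leq |l_0(0)|+|\nabla l_0(0)|\,|\theta|+\tfrac{L_2''}{2}|\theta|^2$, which is $O(1+|\theta|^2)$. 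This is just the quantitative form of the linear growth of $\nabla l_0$ already recorded in Remark \ref{foxes}, and it is what produces the $|\theta|^2$ in the claimed bound.

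Next I would treat $\theta\in B_D$, where only finitely many terms occur and I estimate $|J(\theta,X_k)|\leq |l_0(\theta)|+\sum_{i=1}^{m_s}|l_i(\theta,X_k)|$ (the indicator of $A_0$ is disjoint from those of the $A_i$, and the remaining indicators are dropped). Here $l_0$ is continuous on the compact ball $B_D$, hence bounded there, while each $l_i$ is Lipschitz in both variables and, by Assumption \ref{smoothness} together with strong stationarity, $|X_k|\leq K_0$ uniformly in $k$. Thus $(\theta,X_k)$ ranges over the compact set $B_D\times\{x:\ |x|\leq K_0\}$, on which each $|l_i(\theta,X_k)|$ is bounded by a constant independent of $k$; since $1+|\theta|^2\geq 1$, a uniform constant bound is enough on $B_D$.

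Combining the two regions and taking $C_l$ to be the larger of the two constants gives $|J(\theta,X_k)|\leq C_l(1+|\theta|^2)$ for all $\theta$ and all $k$. There is no genuine obstacle in this lemma; the only points requiring care are the uniformity in $k$, which rests on the uniform boundedness of $X_k$, and the use of the two-sided second-derivative bound (rather than mere local Lipschitz control) to secure genuinely \emph{quadratic}, not just locally Lipschitz, growth outside $B_D$.
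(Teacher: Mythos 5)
Your proof is correct and takes essentially the same route as the paper's: the paper simply bounds $\vert J(\theta,x)\vert\leq \vert l_0(\theta)\vert+\sum_{i=1}^{m_s}\mathbb{1}_{A_i(x)}\vert l_i(\theta,x)\vert$, invokes the quadratic growth of $l_0$ (from the Hessian bound), and notes that the $l_i$ are bounded on the bounded sets $\cup_x A_i(x)$ with $\vert X_k\vert\leq K_0$ uniformly in $k$. Your split between $\theta\notin B_D$ (where $J=l_0$) and $\theta\in B_D$ (where everything is compactly supported and bounded) is just a slightly more explicit organization of that same one-line argument.
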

\begin{proof}
Recall that 
\begin{align*}
   \vert J(\theta,x)\vert  \leq |l_0(\theta)|+ \sum _{i=1}^{m_s} 1_{A_i(x)}\left \vert  l_i (x,\theta)  \right\vert,
\end{align*}
where the functions $l_i$ are
bounded on the bounded sets $\cup_{x\in\mathbb{R}^d} A_i(x)$ for $i=1,\ldots,d$, and $l_0$ grows
quadratically.
\end{proof}

The difficulty of the following lemma consists in
handling the discontinuities and the dependence of the sequence $X_{k}$ at the same time.

\begin{lemma} \label{lem:M-Gamma-estimate} Let Assumptions \ref{asp:H-special_form}
and \ref{smoothness} hold. Then for each $R>0$ the random field $J(\theta,X_n)$, $\theta\in B_R$, $n\in\mathbb{N}$ satisfies
\begin{align*}
    & M_3^n(J(\theta, X))\leq  C_l(1+R^2),\\
    & \Gamma_3^n(J(\theta, X))\leq L(1+R^{2}),
\end{align*}
for some $L>0$, where $C_l$ is as in Lemma  \ref{lem:quad_growth}.
\end{lemma}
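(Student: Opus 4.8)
The plan is to handle the two quantities separately: the bound on $M_3^n$ is immediate, while the bound on $\Gamma_3^n$ is the substantial part. For $M_3^n(J(\theta,X))$, Lemma \ref{lem:quad_growth} gives $|J(\theta,X_{n+k})|\leq C_l(1+|\theta|^2)\leq C_l(1+R^2)$ almost surely for every $\theta\in B_R$ and every $k$; since this is a deterministic pointwise bound, taking conditional third moments and the cube root preserves it, so $\E^{1/3}[|J(\theta,X_{n+k})|^3\vert\mathcal{F}_n]\leq C_l(1+R^2)$ and hence $M_3^n(J(\theta,X))\leq C_l(1+R^2)$. For the mixing coefficient, fix $\tau$ and $k\geq\tau$, write $\mathcal{G}:=\mathcal{F}^+_{n+k-\tau}\vee\mathcal{F}_n$, and set $\tilde X:=\E[X_{n+k}\vert\mathcal{G}]$, which is $\mathcal{G}$-measurable, bounded by $K_0$, and satisfies $\E^{1/3}[|X_{n+k}-\tilde X|^3\vert\mathcal{F}_n]\leq\gamma_3^n(\tau,X)$ by the very definition of $\gamma_3^n$. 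The first reduction replaces the conditional expectation in $\gamma_3^n(\tau,J)$ by the surrogate $J(\theta,\tilde X)$: since $\mathcal{F}_n\subseteq\mathcal{G}$, conditional Jensen and the tower property show $\E[\,\cdot\,\vert\mathcal{G}]$ is a contraction for the conditional $L^3(\mathcal{F}_n)$ norm, so for any $\mathcal{G}$-measurable $Z$ one gets $\E^{1/3}[|J(\theta,X_{n+k})-\E[J(\theta,X_{n+k})\vert\mathcal{G}]|^3\vert\mathcal{F}_n]\leq 2\,\E^{1/3}[|J(\theta,X_{n+k})-Z|^3\vert\mathcal{F}_n]$. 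Taking $Z=J(\theta,\tilde X)$, it remains to bound $\E^{1/3}[|J(\theta,X_{n+k})-J(\theta,\tilde X)|^3\vert\mathcal{F}_n]$ uniformly in $\theta\in B_R$ and $k\geq\tau$, in a form summable over $\tau$.

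Expanding $J$ into its finitely many summands, I would split each difference $\mathbb{1}_{A_i(X_{n+k})}l_i(\theta,X_{n+k})-\mathbb{1}_{A_i(\tilde X)}l_i(\theta,\tilde X)$ into a Lipschitz part $\mathbb{1}_{A_i(X_{n+k})}[l_i(\theta,X_{n+k})-l_i(\theta,\tilde X)]$ and an indicator part $l_i(\theta,\tilde X)[\mathbb{1}_{A_i(X_{n+k})}-\mathbb{1}_{A_i(\tilde X)}]$, treating the $l_0$-term analogously. The Lipschitz part is at most $L_i|X_{n+k}-\tilde X|$, contributing $\leq L_i\gamma_3^n(\tau,X)$. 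For the indicator part the prefactors are controlled by the growth of the coefficients, $|l_i(\theta,\tilde X)|=O(1+R)$ and $|l_0(\theta)|=O(1+R^2)$ (using $|\tilde X|\leq K_0$), and the indicator difference itself is bounded, via the elementary product-of-indicators inequality already exploited in the proof of Lemma \ref{conli}, by a finite sum of single-threshold \emph{straddle} indicators of the event that $X_{n+k}^l$ and $\tilde X^l$ lie on opposite sides of a fixed real $c=(g_i^j(\theta))^l$ (or $(h_i^j(\theta))^l$).

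The crux is the conditional probability of such a straddle event. I would introduce a free parameter $\delta>0$ and split it: on $\{|X_{n+k}^l-\tilde X^l|>\delta\}$ a conditional Chebyshev inequality bounds it by $\gamma_3^n(\tau,X)^3/\delta^3$; on the complement a straddle forces $X_{n+k}^l\in[c-\delta,c+\delta]$, and since $X_{n+k}$ is bounded and admits, conditionally on $\mathcal{F}_{n+k-1}$, a density bounded by $F$ (Assumption \ref{smoothness}), the tower property over $\mathcal{F}_n\subseteq\mathcal{F}_{n+k-1}$ yields a bound $C\delta$ for this slab probability. Optimizing $\delta\sim\gamma_3^n(\tau,X)^{3/4}$ gives a straddle probability of order $\gamma_3^n(\tau,X)^{3/4}$, hence a conditional $L^3$ contribution of order $\gamma_3^n(\tau,X)^{1/4}$. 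Collecting all pieces produces, with constants uniform in $n$,
$$\gamma_3^n(\tau,J)\leq C_1\gamma_3^n(\tau,X)+C_2(1+R^2)\gamma_3^n(\tau,X)^{1/4}.$$
Summing over $\tau$ and using $\gamma_3^n(\tau,X)=O((1+\tau)^{-4-\epsilon})$ from Assumption \ref{asp:mix} makes both $\sum_\tau\gamma_3^n(\tau,X)$ and $\sum_\tau\gamma_3^n(\tau,X)^{1/4}=O(\sum_\tau(1+\tau)^{-1-\epsilon/4})$ finite, giving $\Gamma_3^n(J)\leq L(1+R^2)$.

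The main obstacle, and precisely the place where the exponent $4+\epsilon$ in Assumption \ref{asp:mix} is consumed, is this straddle estimate: the discontinuity of $J$ degrades the natural decay rate $\gamma_3^n(\tau,X)$ to only $\gamma_3^n(\tau,X)^{1/4}$, so summability over $\tau$ survives solely because the mixing coefficients were assumed to decay faster than $(1+\tau)^{-4}$. Everything else (the contraction reduction, the Lipschitz estimates, and the coefficient growth bounds) is routine once this quantitative control of boundary crossings is in place.
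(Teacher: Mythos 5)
Your proposal is correct and follows essentially the same route as the paper's proof: the same surrogate $X_k^+=\E[X_{n+k}\vert\mathcal{F}^+_{n+k-\tau}\vee\mathcal{F}_n]$, the same factor-2 reduction (the paper cites Lemma \ref{mall}, which you re-derive), the same Lipschitz-plus-indicator decomposition, and the same balance of the bounded-density slab estimate against a third-moment Markov/Chebyshev bound. The only cosmetic difference is that you optimize the splitting parameter as $\delta\sim\gamma_3^n(\tau,X)^{3/4}$, whereas the paper fixes $\epsilon_\tau=(\tau+1)^{-3-\epsilon/2}$ explicitly; both choices yield tails summable over $\tau$ under Assumption \ref{asp:mix}.
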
 
\begin{proof} The first statement is clear from Lemma \ref{lem:quad_growth}.
Let $n\geq 0$, $\tau\geq 1$ be fixed. For $k\geq\tau$, define 
$X_k^+ = \E[X_{n+k}\vert \mathcal{F}_{n+k-\tau}^+\vee\mathcal{F}_{n}]$.
For the sake of simplicity, assume that $m_s=1$ in the definition of $J$, $m_p=0$, but the same argument would 
work for several summands, too. We also take the process $X$ unidimensional ($m:=1$)
noting that the same arguments easily carry over to a general $m$. %We will write $l$ instead of $l_1$.

We now perform an auxiliary estimate. Let $\epsilon_{\tau}>0$ be a parameter
to be chosen later and let $1\leq j\leq m_{p}'$. We will write $h$ below instead of $h_{1}$. Define $Z_{k}=X_{n+k}-X_{k}^{+}$ and estimate
\begin{align*}
  &  \E_n \left|1_{\{X_{n+k}>h^j(\theta)\}}-1_{\{X_{k}^{+} > h^j(\theta\})}\right|^3 
  = \E_n \left|1_{\{X_{n+k}>h^j(\theta)\}}-1_{\{X_{k}^{+} > h^j(\theta\})}\right|
  \\ &\leq \p_{n} \left( X_{n+k}\in\left( h^j(\theta)-|Z_{k}|,h^j(\theta)+|Z_{k}| \right) \right) \\
  &\leq \p_{n} \left( X_{n+k}\in\left( h^j(\theta)-|Z_{k}|,h^j(\theta)+|Z_{k}| \right), 
  |Z_{k}|\leq \epsilon_{\tau} \right)+ \p_{n}(|Z_{k}|\geq\epsilon_{\tau})  \\
   &\leq 2F\epsilon_{\tau}+\frac{\E_{n}[|X_{n+k}-X_{k}^{+}|^3]}{\epsilon_{\tau}^3},\\
\end{align*}
where the last inequality follows from Assumption \ref{smoothness}  and the Markov inequality. Now
estimate
\begin{eqnarray*}
    & & \E_{n}^{1/3} \left|  \left( \prod_{j=1}^{m_p'}{1}_{\{X_{n+k}>h^j(\theta)\}}\right) 
    l_1 (X_{n+k},\theta) - \left( \prod_{j=1}^{m_p'} {1}_{\{X_{k}^{+}>h^j(\theta)\}}\right) l_1 
    (X_{k}^{+},\theta) \right|^3 \\
    &\leq& \E_{n}^{1/3} \left| \left( \prod_{j=1}^{m_p'} {1}_{\{X_{n+k}>h^j(\theta)\}}-  
    \prod_{j=1}^{m_p'} {1}_{\{X^{+}_{k}>h^j(\theta)\}}  \right) l_1 (X_{n+k},\theta)     \right|^3 \\
    &+&  \E_{n}^{1/3} \left|\left( l_1 (X_{n+k},\theta) - l_1 (X_{k}^{+},\theta)\right) 
    \prod_{j=1}^{m_p'} {1}_{\{X_{k}^{+}>h^j(\theta)\}}\right|^3 \\
&\leq& \E^{1/3}_{n}\left[
\left|\sum_{j=1}^{m_p'} 
   \left| {1}_{\{X_{n+k}>h^j(\theta)\}}-  {1}_{\{X^{+}_{k}>h^j(\theta)\}} 
   \right|\right|^3(L_1(|X_{n+k}|+R)+|l_1(0,0)|)^3
   \right]+L_1\E_{n}^{1/3}|X_{n+k}-X_{k}^+|^3\\
   &\leq& C_{1}(1+R)\left(\epsilon_{\tau}^{1/3}+\frac{\E_{n}^{1/3}[|X_{n+k}-X_{k}^{+}|^3]}{\epsilon_{\tau}}\right)
%    &\leq \E_{n^{\mu}} \left| C |X_{k+1}-X_{k+1}^{+}| + L_{H}(R+2+|X_{k+1}|)\sum_{j=1}^{m_p'} \left| \mathbb{1}_{\{X_{k+1}>h^j(\vartheta)\}}-  
 %   \mathbb{1}_{\{X^{+}_{k+1}>h^j(\vartheta)\}} \right| \right.\\
  %  &+ \left. L_{H}(R+2+|X_{k+1}|)\sum_{j=1}^{m_p'} 
   % \left| \mathbb{1}_{\{X_{k+1}>h^j(\vartheta)\}}-  \mathbb{1}_{\{X^{+}_{k+1}>h^j(\vartheta)\}} \right|\right|,
\end{eqnarray*}
for some $C_{1}$,
where we used the Lipschitz-continuity of the function $l_1$, as well as the observation that 
$$\left| \prod_{j=1}^{m_p'} {1}_{\{X_{n+k}>h^j(\theta)\}}-  
\prod_{j=1}^{m_p'} {1}_{\{X^{+}_{k}>h^j(\theta)\}}  \right| \leq  
\sum_{j=1}^{m_p'} \left| {1}_{\{X_{n+k}>h^j(\theta)\}}-  
{1}_{\{X^{+}_{k}>h^j(\theta)\}}  \right|.{}
$$
A similar estimate works for $l_0$ but
we get the upper bound
\begin{eqnarray*}
& & \E_{n}^{1/3}\left| 1_{A_{0}(X_{n+k})}l_{0}(\theta)-1_{A_{0}(X_{k}^{+})}l_{0}(\theta)\right|^{3} \\
&\leq&  C_{1}(1+R^{2})\left(\epsilon_{\tau}^{1/3}+\frac{\E_{n}^{1/3}[|X_{n+k}-X_{k}^{+}|^3]}{\epsilon_{\tau}}\right)
\end{eqnarray*}
instead.
For the second inequality of the present lemma, note first
that Lemma \ref{mall} below implies
\begin{align*}
    &\E^{1/3}_{n}\left[\left\vert J(\theta,X_{n+k}) -\E[J(\theta,X_{n+k})\vert\mathcal{F}_{n}\vee\mathcal{F}_{n+k-\tau}^{+}]\right\vert^3\right]\\
   \leq & 2\E_{n}^{1/3}\left[\left\vert J(\theta,X_{n+k}) -J(\theta,X_{k}^+)\right\vert^3\right], 
   %\\
   %\leq &2L \E^{1/r}\left[\left\vert W_{n+k}- \E[W_{n+k}\vert\mathcal{F}_{n+k-\tau}^{+}\vee\mathcal{F}_n]\right\vert^r\vert\mathcal{F}_n \right],
\end{align*}
hence it suffices to estimate the latter
quantity. From our previous estimates 
it follows that, for some $C>0$,
\begin{equation}\label{juhh}
\E_n^{1/3}\left[|J(\theta,X_{n+k})-J(\theta,X_k^+)|^3\right]\leq C(1+R^{2})\left[\sqrt[3]{\epsilon_{\tau}}
+\frac{\E^{1/3}_n|X_{n+k}-X_{k}^+|^{3}}{\epsilon_{\tau}}\right]
\end{equation}
Choose $\epsilon_{\tau}:=(\tau+1)^{-3-\epsilon/2}$.
Summing up the right-hand side for $\tau\geq 1$ we see that, by Assumption \ref{asp:mix}, 
the sum has an upper bound independent of $k$. The statement follows as the case $\tau=0${}
is easy.\end{proof}

\end{subsection}

\begin{subsection}{Decreasing gain case}

The following lemma contains the core estimates of the present paper.

\begin{lemma}\label{lem:dec_gain} Let $n\geq 1$. Let $\lceil n^{\mu}\rceil\leq t<
\lceil (n+1)^{\mu}\rceil$ for $\mu:=1/\gamma$ and let Assumptions \ref{asp:G}, \ref{asp:H-special_form}, \ref{smoothness}, \ref{asp:mix}, \ref{ac} 
and \ref{asp:stability} hold. Then
$\E|\theta_t-\overline{y}_t|=O(n^{-\beta })$, where $\beta =\min\left(\frac{1}{2\gamma}-\frac{1}{2}, 2\right)$.
\end{lemma}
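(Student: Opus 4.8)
The plan is to treat this as a one-block comparison between the interpolated iteration $\theta_t$ and the restarted ODE solution $\overline y_t$ via the ODE method, isolating a mixing-controlled noise term, a finite-difference bias term, and a Lipschitz drift term. Writing $T_n:=\lceil n^\mu\rceil$ and recalling that $\theta_t$ and $\overline y_t$ agree at $t=T_n$, I would start from the integral identity
\[
\theta_t-\overline y_t=\int_{T_n}^{t} a_u\bigl[H(u,\theta_{\lfloor u\rfloor})-G(\overline y_u)\bigr]\,du,\qquad T_n\le t<T_{n+1},
\]
and decompose the integrand as $H(u,\theta_{\lfloor u\rfloor})-G(\overline y_u)=V_u+b_u+\bigl[G(\theta_{\lfloor u\rfloor})-G(\overline y_u)\bigr]$, where $\widehat H_u(\theta):=\E[H(\theta,X_{\lfloor u\rfloor+1},c_u)]$ is the \emph{unconditional} expected estimator, $V_u:=H(u,\theta_{\lfloor u\rfloor})-\widehat H_u(\theta_{\lfloor u\rfloor})$ is a centred random field, and $b_u:=\widehat H_u(\theta_{\lfloor u\rfloor})-G(\theta_{\lfloor u\rfloor})$ is the deterministic finite-difference bias evaluated along the iterates. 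Centring by the unconditional mean (rather than by $\E_{\lfloor u\rfloor}$) is essential: under mere dependence the one-step conditional mean $\E_{\lfloor u\rfloor}H$ need not be close to $G$, so the gap between conditional and unconditional means must be absorbed into the mixing-controlled term $V_u$, not treated as a bias.

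Next I would close the estimate by a Gronwall argument over the block. Bounding $|G(\theta_{\lfloor u\rfloor})-G(\overline y_u)|\le L_G\bigl(|\theta_u-\overline y_u|+|\theta_{\lfloor u\rfloor}-\theta_u|\bigr)$ and noting that the intra-step increment satisfies $\E|\theta_{\lfloor u\rfloor}-\theta_u|=O(a_u)$ (since $\E_{\lfloor u\rfloor}|H|$ is bounded on bounded sets by Lemma~\ref{colis} and $\sup_k\|\theta_k\|_1<\infty$ by Lemma~\ref{momentus}), the Gronwall constant over the block equals $\exp\bigl(L_G\int_{T_n}^{T_{n+1}}a_u\,du\bigr)$. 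As $\int_{T_n}^{T_{n+1}}a_u\,du=O(1/n)$, this constant is bounded, and the problem reduces to estimating $\E\sup_t|\int_{T_n}^t a_u V_u\,du|$ together with $\int_{T_n}^{T_{n+1}} a_u\,\E|b_u|\,du$. For the bias, stationarity gives $\widehat H_u^i(\theta)=\frac{U(\theta+c_u\mathbf{e}_i)-U(\theta-c_u\mathbf{e}_i)}{2c_u}$, whence $\sup_\theta|\widehat H_u(\theta)-G(\theta)|=O(c_u)$ by a symmetric-difference estimate using the Lipschitz property of $G$ from Assumption~\ref{asp:G}. Since $c_u=O(n^{-1})$ on the block and $\int_{T_n}^{T_{n+1}}a_u\,du=O(n^{-1})$, the accumulated bias is $O(n^{-2})$, which produces the second branch of $\beta$; the discretization term $\int a_u\,\E|\theta_{\lfloor u\rfloor}-\theta_u|\,du$ is of still smaller order.

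The heart of the proof is the noise term, to which I would apply a maximal inequality for conditionally $L$-mixing random fields of the type recalled in Section~\ref{auxiliary} (in the spirit of \cite{gerencser1989class,chau2016fixed}), after localising the iterates to a ball $B_R$ via the moment bound of Lemma~\ref{momentus} and controlling the escape event through the quadratic growth of Lemma~\ref{lem:quad_growth}. The inputs are the conditional moment and dependence bounds $M_3^n$, $\Gamma_3^n$ for the finite difference $H$, obtained from the bounds for $J$ in Lemma~\ref{lem:M-Gamma-estimate} after dividing by $2c_k$; crucially, the conditional Lipschitz estimate of Lemma~\ref{conli}, with its $+c^2$ correction, is what lets me handle both the discontinuities of $H$ and the substitution of the random, noise-correlated argument $\theta_{\lfloor u\rfloor}$. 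Such an inequality yields a bound of order $\bigl(\sum_{k\in[T_n,T_{n+1})}a_k^2\bigr)^{1/2}\sqrt{M_3\,\Gamma_3}$; inserting $a_k=O(n^{-\mu})$, block length $O(n^{\mu-1})$ and $M_3,\Gamma_3=O(1/c_k)=O(n)$ gives $O(n^{(1-\mu)/2})=O(n^{-(\frac{1}{2\gamma}-\frac12)})$, the first branch of $\beta$. Combining the two contributions gives $\E|\theta_t-\overline y_t|=O\bigl(n^{-\min(\frac{1}{2\gamma}-\frac12,\,2)}\bigr)$.

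The main obstacle I anticipate is precisely this maximal inequality: reconciling the discontinuity of $H$ (which forbids any pathwise Lipschitz control and forces Lemma~\ref{conli} to carry the $c^2$ term), the genuine dependence of $X_k$ (accessible only through the mixing decay of Assumption~\ref{asp:mix} and the $\Gamma_3$ estimate), and the $1/c_k$ blow-up of the moments of $H$, all while tracking the exact interplay of $a_k\sim n^{-\mu}$, $c_k\sim n^{-1}$ and the block length $\sim n^{\mu-1}$ that produces the sharp exponent $\tfrac{1}{2\gamma}-\tfrac12$.
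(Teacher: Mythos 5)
Your overall architecture (block comparison with the restarted ODE, a noise/bias/drift split, Gronwall over the block, a maximal inequality for the noise, and the exponent bookkeeping giving $n^{(1-\mu)/2}=n^{\frac12-\frac1{2\gamma}}$ and $n^{-2}$) matches the paper's, but your decomposition places the stochastic fluctuation term in a form in which the key estimate cannot be carried out. You define $V_u = H(u,\theta_{\lfloor u\rfloor}) - \widehat H_u(\theta_{\lfloor u\rfloor})$, i.e.\ the field is evaluated at the \emph{adapted} argument $\theta_{\lfloor u\rfloor}$ and centred at the \emph{unconditional} mean. The maximal inequality available here (Theorem \ref{elso}, applied with $\mathcal{R}_0=\mathcal{F}_{\lceil n^{\mu}\rceil}$) has two hypotheses that this violates: first, it requires $\E[W_t\vert\mathcal{R}_0]=0$ a.s., so the centring must be the conditional expectation given the block start, not the unconditional mean; second, it bounds $\sup_s\left|\int_0^s f_t W_t\, dt\right|$ for a process whose parameter is frozen at an $\mathcal{R}_0$-measurable value, not for a field evaluated along a time-varying argument $\theta_{\lfloor u\rfloor}$ that is correlated with $X_{\lfloor u\rfloor+1}$. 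This is exactly the state-dependent-noise obstruction the paper emphasizes; no localisation to $B_R$ or appeal to the $M_3,\Gamma_3$ bounds of Lemma \ref{lem:M-Gamma-estimate} repairs it, and your one-sentence invocation of Lemma \ref{conli} to ``handle the substitution'' of the random argument is precisely the missing (and hardest) step.

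The paper resolves both points by a different splitting: its noise term is $\Sigma_2$, built from $H(u,\overline{y}_u)-\E[H(u,\overline{y}_u)\vert\mathcal{F}_{\lceil n^{\mu}\rceil}]$, where $\overline{y}_u$ is $\mathcal{F}_{\lceil n^{\mu}\rceil}$-measurable --- so Theorem \ref{elso} applies conditionally (via Lemma \ref{wales}) and the centring hypothesis holds. The two effects you tried to fold into $V_u$ are then handled separately: the swap from $\theta_{\lfloor u\rfloor}$ to $\overline{y}_u$ becomes $\Sigma_1$, controlled in conditional expectation by Lemma \ref{conli}, and the gap between the block-start conditional mean and the unconditional mean becomes part of $\Sigma_3$, estimated by coupling $X_k$ with $X_k^+=\E[X_{k}\vert\mathcal{F}^+_{\lceil n^{\mu}\rceil}]$ and using the second condition of Assumption \ref{asp:mix}. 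Note a consequence your version hides: after the swap, the Gronwall weight is not $L_G a_u$ but $C_{\flat}a_u/c_u$ (Lemma \ref{conli} enters divided by $2c_u$), and $\int_{\lceil n^{\mu}\rceil}^{\lceil (n+1)^{\mu}\rceil} (a_u/c_u)\,du = O(n^{\mu\gamma-1})=O(1)$ only because $\mu=1/\gamma$. Your claim that the Gronwall factor is $\exp\left(L_G\int a_u\,du\right)$ makes the block length look irrelevant, whereas keeping $\int a_u/c_u\,du$ bounded over a block is exactly what dictates the choice $\mu=1/\gamma$ and makes the whole scheme close.
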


\begin{proof}
For $\lceil n^{\mu}\rceil\leq t
< \lceil (n+1)^{\mu}\rceil$,
\begin{align*}
    \vert \theta_{\lfloor t\rfloor} - \overline{y}_t\vert &\leq{}  
    \vert \overline{y}_{\lfloor t\rfloor} - \overline{y}_t\vert+
    \vert \theta_{\lfloor t\rfloor} - 
    \overline{y}_{\lfloor t\rfloor}\vert \\
    &\leq     \int\limits_{\lfloor t\rfloor}^{t} a_u 
     \left|G(\overline{y}_u)\right| \,du
     +
\left| \int\limits_{\lceil n^{\mu}\rceil}^{\lfloor t\rfloor} a_u \left( H(u,\theta_{\lfloor u\rfloor})-
    G(\overline{y}_u) \right) \,du \right|\\
&\leq a_{n^\mu} \int\limits_{\lfloor t\rfloor}^{t} \left|G(\overline{y}_u)\right| \,du{}
%+\int_{n^{\mu}}^{\lceil n^{\mu}\rceil} a_{n^{\mu}}(|G(\overline{y}_{u})|+|H(u,\theta_{\lfloor u\rfloor})|)\, du
\\
&+  \left| \int\limits_{\lceil n^{\mu}\rceil}^{\lfloor t\rfloor} a_u \left( H(u,\theta_{\lfloor u\rfloor})-H(u,\overline{y}_u) \right)\,du \right| \\
    &+ \left| \int\limits_{\lceil n^{\mu}\rceil}^{\lfloor t\rfloor} a_u \left( H(u,\overline{y}_u)-\E[H(u,\overline{y}_u)| \mathcal{F}_{\lceil n^{\mu}\rceil} ] \right) \,du\right| \\
    &+ \left| \int\limits_{\lceil n^{\mu}\rceil}^{\lfloor t\rfloor} a_u \left( \E[H(u,\overline{y}_u)| \mathcal{F}_{\lceil n^{\mu}\rceil} ]-G(\overline{y}_u) \right)\,du\right| \\
    &=:\Sigma_0+\Sigma_1+\Sigma_2+\Sigma_3.
\end{align*}

\noindent\textsl{Estimation of $\Sigma_0$.}
Since $G$ has at most linear growth, 
Lemma \ref{momentus} guarantees that
$$
\E[\Sigma_0]=O\left(a_{n^{\mu}}\int_{\lfloor t\rfloor}^{t}(\E|\overline{y}_{u}|+1)\, du\right)=O(n^{-\mu}).
$$

\noindent\textsl{Estimation of $\Sigma_1$.}
Recall that, by the tower property for
conditional expectations,
$$
\E \left| H(u,\theta_u)-H(u,\overline{y}_{u})\right|=
\E\E_{k} \left| H(u,\theta_u)-H(u,\overline{y}_{u})\right|
$$
for all $k\in\mathbb{N}$. Applying this observation
to $k=\lfloor u\rfloor$, Lemma \ref{conli}
implies that
\begin{align} \nonumber
    \E[\Sigma_1] & = \E \left|  \int\limits_{\lceil n^{\mu}\rceil}^{\lfloor t\rfloor} a_u \left( H(u,\theta_{\lfloor u\rfloor})-H(u,\overline{y}_{u}) \right)\,du\right|\\
    & \leq   \int\limits_{\lceil n^{\mu}\rceil}^{t} a_u \E \left| H(u,\theta_{\lfloor u\rfloor})-H(u,\overline{y}_{u})\right| \,du \nonumber
    \\
    & \leq C_{\flat}  \int\limits_{\lceil n^{\mu}\rceil}^{t} \frac{a_u}{c_u} \E
    \left|\theta_{\lfloor u\rfloor}-\overline{y}_{u}\right|\,du +C_{\flat}\int\limits_{\lceil n^{\mu}\rceil}^{t} \frac{a_{u}}{c_{u}}c_{u}^{2}\, du
\label{csillagocska}
%    \\ 
%    &\leq C_{\sharp} \int\limits_{\lfloor n^{\mu}\rfloor}^{(n+1)^{\mu}}  \lfloor u \rfloor ^{-1+\gamma} 
%    \E\left|\theta_{\lfloor u\rfloor}-\overline{y}_{u}\right|\, du + C_{\sharp}\int_{\lfloor n^{\mu}\rfloor}^{t} 
%    {\lfloor u\rfloor}^{-1-\gamma}\, du
    %\\
    %&\leq 
    %C_{\flat} \int\limits_{n^{\mu}}^{(n+1)^{\mu}}  \lfloor u \rfloor ^{-1+\gamma} 
    %\left|\theta_{u}-\overline{y}_{u}\right|\, du+
    %C_{\bullet}
    %\int\limits_{n^{\mu}}^{(n+1)^{\mu}}  \lfloor u \rfloor ^{-2+2\gamma}\, du
\end{align} 
Henceforth we will denote
$$
\Sigma_1':=C_{\flat}\int\limits_{\lceil n^{\mu}\rceil}^{\lceil (n+1)^{\mu}\rceil} \frac{a_{u}}{c_{u}}c_{u}^{2}\, du.
$$
Notice that 
$$
E[\Sigma_{1}']=O(n^{-\mu\gamma-1})=O(n^{-2}).
$$
%for some $C_{\sharp}$.
%for some constant $C_{\bullet}$, by \eqref{hug}.

\noindent\textsl{Estimation of $\Sigma_2$.}
Notice that $H(u,\bar{\theta})=\E[H(u,\bar{\theta})| \mathcal{F}_{\lceil n^{\mu}\rceil} ]$ for all $\mathcal{F}_{\lceil n^{\mu}\rceil}$-measurable
$\bar{\theta}$ such that a.s.\ $\bar{\theta}\notin B_D$ since $J(\theta,x)$ does not depend on $x$ 
outside $B_{D}$ by Assumption \ref{asp:H-special_form}. Thus
\begin{align*}
   \Sigma_2 
   %&=
   %\left| \int\limits_{\lceil n^{\mu}\rceil}^{\lfloor %t\rfloor} a_u 
   %\left( H(u,\overline{y}_u)-\E[H(u,\overline{y}_u)| %\mathcal{F}_{n^{\mu}} ] \right)\,du\right|\\
   &\leq\sup_{\lceil n^{\mu}\rceil \leq t< \lceil (n+1)^{\mu}\rceil}\left| 
   \int\limits_{\lceil n^{\mu}\rceil}^{t} a_u \1_{\{\overline{y}_{u}\in B_{D}\}}\left( H(u,\overline{y}_u)-\E[H(u,\overline{y}_u)| \mathcal{F}_{n^{\mu}} ] \right)\,du\right|.
\end{align*}{}

We will use the inequality of Theorem \ref{elso} below with $r=3$, with
$\mathcal{R}_{t}:=\mathcal{F}_{t+\lceil n^{\mu}\rceil}$, $t\in\mathbb{R}_{+}$, 
$\mathcal{R}_t^+:=\mathcal{F}_{t+\lceil n^{\mu}\rceil}^+$ with
the process defined by 
\begin{equation}\label{defw}
W_t= 
\1_{\{\overline{y}_{t+\lceil n^{\mu}\rceil}\in B_{D}\}} c_{t+\lceil n^{\mu}\rceil}\left(H(t,{}
\overline{y}_{t+\lceil n^{\mu}\rceil})-\E[H(t,\overline{y}_{t+\lceil n^{\mu}\rceil})\vert \mathcal{F}_{\lceil n^{\mu}\rceil}]
\right),\ t\geq 0
\end{equation} and with the function $f_t={a_{t+\lceil n^{\mu}\rceil}}/{c_{t+\lceil n^{\mu}\rceil }}$. 
Note that $\{\overline{y}_{t}\in B_{D}\}\in\mathcal{F}_{\lceil n^{\mu}\rceil}$ for all 
$\lceil n^\mu\rceil\leq t<\lceil (n+1)^\mu\rceil$.
We get from Lemma \ref{wales} below and
from the cited inequality that
%for each $N$.
\begin{align*}
    \E[\Sigma_2] &=  \E[\E[\Sigma_2\vert \mathcal{F}_{\lceil n^{\mu}\rceil}]] \leq  
    \E[
   \E^{1/3}[\Sigma_2^3\vert \mathcal{F}_{\lceil n^{\mu}\rceil}]] \\
    &\leq C'(3) \left(\int\limits_{\lceil n^{\mu}\rceil}^{\lceil (n+1)^{\mu}\rceil}\left(\frac{a_u}{c_u}\right)^2\,du\right)^{1/2} 
    \E[\tilde{M}_3+\tilde{\Gamma}_3]\\
    &\leq C'(3) \left(\int\limits_{\lceil n^{\mu}\rceil}^{\lceil (n+1)^{\mu}\rceil}\left(\frac{a_u}{c_u}\right)^2\,du\right)^{1/2}C(1+D^2).
    %\\
    %&\leq C'(3) \left(\int\limits_{n^{\mu}}^{(n+1)^{\mu}}\left(\frac{a_u}{c_u}\right)^2\,du\right)^{1/2} \p^{1/2}(B_N) \E^{1/2}[M_3^{n^{\mu}}(W)+\Gamma_3^{n^{\mu}}(W)]^2,
\end{align*}
%where the last inequality follows from Cauchy inequality.
We thus get
$$
\E[\Sigma_2] =O\left(\int\limits_{\lceil n^{\mu}\rceil}^{\lceil (n+1)^{\mu}\rceil}\left(\frac{a_u}{c_u}\right)^2\,du\right)^{1/2}
=O\left(n^{\frac{-\mu+2\mu\gamma-1}{2}}\right).$$

\noindent\textsl{Estimation of $\Sigma_3$.}
%Notice that $\E[H(u,\bar\theta)\vert \mathcal{F}_{\lceil n^{\mu}\rceil}]=G(\bar\theta)$ for all $\mathcal{F}_{\lceil n^\mu\rceil}$-measurable $\bar\theta$ satisfying $\bar\theta\notin B_{D}$.
\begin{align}\nonumber
    \E[\Sigma_3 ] & \leq
    \E\left[\int\limits_{\lceil n^{\mu}\rceil}^{t} a_u 
%\1_{\{\overline{y}_{u}\in B_{D}\}}
    \left| \E[H(u,\overline{y}_{u})\vert \mathcal{F}_{\lceil n^{\mu}\rceil}]-G(\overline{y}_{u})\right|\,du \right]\\
%    &\leq \E 1_{A_{N}}\int\limits_{n^{\mu}}^{t} a_u
%    \left\vert \E[H(\overline{y_k},X_{k+1},c_k)\vert \mathcal{F}_{n^{\mu}}]-G(\overline{y_u})\right\vert\,du\\
%    &\leq \nonumber \E\left[ 
%    \int\limits_{\lceil n^{\mu}\rceil}^{t} a_u \sup_{\vartheta\in \mathbb{R}^{d}}\left\vert  
%    \E[H(\vartheta,X_{\lfloor u\rfloor+1},c_{\lfloor u\rfloor})\vert \mathcal{F}_{\lceil n^{\mu}\rceil}]-G(\vartheta) \right\vert \,du\right] \\  
    &\leq \nonumber \E \left[\int\limits_{\lceil n^{\mu}\rceil}^{t} a_u
    \sup_{\vartheta\in \mathbb{R}^{d}}\left\vert
    \E[H(\vartheta,X_{\lfloor u\rfloor+1},c_{\lfloor u\rfloor})\vert \mathcal{F}_{\lceil n^{\mu}\rceil}]-  \E[H(\vartheta,X_{\lfloor u\rfloor+1},c_{\lfloor u\rfloor}) ]\right\vert\,du \right] \\
    & + \E\left[\int\limits_{\lceil n^{\mu}\rceil}^{t} a_u 
    \sup_{\vartheta\in\mathbb{R}^{d}}\left\vert  \E[H(\vartheta,X_{\lfloor u\rfloor+1},c_{\lfloor u\rfloor}) ] - G(\vartheta)\right\vert\,du  \right] \label{mangalica}
\end{align}

To handle the second sum, note that, for each $i=1,\ldots,d$,{}
$$
\E[H^{i}(\vartheta,X_{k+1},c_k) ] = \frac{U(\vartheta + c_k\mathbf{e}_{i})-U(\vartheta - c_k\mathbf{e}_{i})}{2c_k}=
G^{i}(\xi^{i}_k)
$$ 
for some $\xi^{i}_{k}\in [\vartheta - c_k\mathbf{e}_{i},\vartheta + c_k\mathbf{e}_{i}]$.
The Lipschitz continuity of $G$ implies that $|G^{i}(\xi^{i}_k)- G^{i}(\vartheta)|\leq L_G c_k$
so
$$\E\left[\int\limits_{\lceil n^{\mu}\rceil}^{t} a_u  \sup_{\vartheta\in\mathbb{R}^d}\left\vert
\E[H(\vartheta,X_{\lfloor u\rfloor+1},
c_{\lfloor u\rfloor})] - G(\vartheta)\right\vert \, du\right]  \leq \int\limits_{\lceil n^{\mu}\rceil}^{t} a_u d L_G  c_{\lceil u\rceil}\, du 
=O\left(\int\limits_{\lceil n^\mu\rceil}^{\lceil (n+1)^\mu\rceil} u^{-1-\gamma}\, du
\right)=O(n^{-2}).$$

Now we turn to the first sum in \eqref{mangalica}. 
 Define $X_k^+ = \E[X_{k}\vert \mathcal{F}_{\lceil n^{\mu}\rceil}^+]$, $k\geq \lceil n^{\mu}\rceil$.
First let us estimate $$\E_{\lceil n^{\mu}\rceil}[\vert H(\vartheta,X_{k+1+\lceil n^{\mu}\rceil},c_{k+\lceil n^{\mu}\rceil}) - 
H(\vartheta,X_{k+1+\lceil n^{\mu}\rceil}^+,c_{k+\lceil n^{\mu}\rceil})\vert].$$
Fix $\epsilon_{k}>0$ to be chosen later.
By an argument similar to that of Lemma \ref{lem:M-Gamma-estimate} (using the first instead of the third moment
in Markov's inequality) we get that, for some constant $C_{1}$,
\begin{eqnarray*}
& & c_{k+\lceil n^{\mu}\rceil}\E_{\lceil n^{\mu}\rceil}[\vert 
H(\vartheta,X_{\lceil n^{\mu}\rceil+k+1},c_{k+\lceil n^{\mu}\rceil}) - 
H(\vartheta,X_{\lceil n^{\mu}\rceil+k+1}^+,c_{k+\lceil n^{\mu}\rceil})\vert]\\
&\leq& 
%\sum_{N=0}^{\infty}\frac{C^{1}(N+C^{2})}
%{(N+1)^{\delta/2}}
%\left[\epsilon_{k+1}+\frac{1}{\epsilon_{k+1}}\right]\\
%&\leq& \sum_{N=0}^{\infty}\frac{C^{3}}{(N+1)^{2}}
%\left[\epsilon_{k+1}+\frac{1}{\epsilon_{k+1}}\right]\\
%&\leq& 
C_{1}\left[\epsilon_{k}+
\frac{\E_{\lceil n^{\mu}\rceil}[|X_{\lceil n^{\mu}\rceil+k+1}-X_{\lceil n^{\mu}\rceil+k+1}^{+}|]}{\epsilon_{k}}\right].
\end{eqnarray*}

%Using the (conditional) Cauchy inequality and the fact that 
%\begin{eqnarray*}
%& & \E\left[(N+2+|X_{k+1}|)\left| \mathbb{1}_{\{g^j(X_{k+1})>\vartheta+c_k\}}-  \mathbb{1}_{\{g^j(X^{+}_{k+1})>\vartheta+c_k\}}  \right|{}
%\vert \mathcal{F}_{n^{\mu}}\right]\\
%&\leq& \E^{1/2}\left[(N+2+|X_{k+1}|)^{2}\vert\mathcal{F}_{n^{\mu}}\right]
%\E\left[|\mathbb{1}_{\{g^j(X_{k+1})>\vartheta+c_k\}}-  \mathbb{1}_{\{g^j(X^{+}_{k+1})>\vartheta+c_k\}}  \right|{}
%\vert \mathcal{F}_{n^{\mu}}\right]\\
%&\leq& \left(\sqrt{2}(N+2)+\sqrt{2}\E^{1/2}\left[|X_{k+1}|^{2}\vert\mathcal{F}_{n^{\mu}}\right]\right)
%\E\left[|\mathbb{1}_{\{g^j(X_{k+1})>\vartheta+c_k\}}-  \mathbb{1}_{\{g^j(X^{+}_{k+1})>\vartheta+c_k\}}  \right|.
%\end{eqnarray*}

Choose $\epsilon_k=(1+k)^{-1-\epsilon/2}$. Then using Assumption 
\ref{asp:mix} we get $$c_{k+\lceil n^{\mu}\rceil}
\sup_{\vartheta\in \mathbb{R}^{d}}
\E[|H(\vartheta,X_{\lceil n^{\mu}\rceil+k+1},c_{\lceil n^{\mu}\rceil+k})-
H(\vartheta,X_{\lceil n^{\mu}\rceil+k+1}^+,c_{k+\lceil n^{\mu}\rceil})|\vert 
\mathcal{F}_{\lceil n^{\mu}\rceil}] = 
O(k^{-1-\epsilon/2})$$
which also implies 
$$
c_{k+\lceil n^{\mu}\rceil}\sup_{\vartheta\in\mathbb{R}^{d}}
\E[|H(\vartheta,X_{\lceil n^{\mu}\rceil+k+1},c_{\lceil n^{\mu}\rceil+k})-
H(\vartheta,X_{\lceil n^{\mu}\rceil+k+1}^+,c_{\lceil n^{\mu}\rceil+k})|]= 
O(k^{-1-\epsilon/2}).
$$

Since 
$\E[H(\vartheta,X_{k+1}^+,c_{\lceil n^{\mu}\rceil+k})\vert 
\mathcal{F}_{\lceil n^{\mu}\rceil}]=\E[H(\vartheta,X_{k+1}^+,c_{k+\lceil n^{\mu}\rceil})]$ 
for $k\geq \lceil n^{\mu}\rceil$ by independence of 
$\mathcal{F}_{\lceil n^{\mu}\rceil}$ and $\mathcal{F}_{\lceil n^{\mu}\rceil}^{+}$, we have
%$$ \E[H(\vartheta,X_{k+1}^+,c_k)- H(\vartheta,X_{k+1},c_k) ]=
%\E\left[\E_{n^{\mu}}[H(\vartheta,X_{k+1},c_k) - H(\vartheta,X_{k+1}^+,c_k)]\right],$$ and 
\begin{align*}
& \left[\int\limits_{\lceil n^{\mu}\rceil}^{t} a_u \E\sup_{\vartheta\in \mathbb{R}^{d}}\left\vert  
    \E[H(\vartheta,X_{\lfloor u\rfloor+1},c_{\lfloor u\rfloor})\vert \mathcal{F}_{\lceil n^{\mu}\rceil}]-
    \E[H(\vartheta,X_{\lfloor u\rfloor+1},c_{\lfloor u\rfloor}) ]\right\vert \,du\right] 	\\
&\leq \int\limits_{\lceil n^{\mu}\rceil}^{\infty} 
\frac{a_u}{c_u} 
c_u\E\sup_{\vartheta\in \mathbb{R}^{d}}
\E[\left\vert H(\vartheta,X_{\lfloor u\rfloor+1},c_k)-H(\vartheta,X_{\lfloor u\rfloor+1}^+,c_k)\right\vert \vert 
\mathcal{F}_{\lceil n^{\mu}\rceil}]\, du\\
&+
\int\limits_{\lceil n^{\mu}\rceil}^{\infty} \frac{a_u}{c_u}c_u 
\sup_{\vartheta\in \mathbb{R}^{d}}
\E\left[\left\vert H(\vartheta,X_{\lfloor u\rfloor+1},c_k)-H(\vartheta,X_{\lfloor u\rfloor+1}^+,c_k)\right\vert\right]\, du
\\
&\leq C_2\frac{a_{\lceil n^\mu\rceil}}{c_{\lceil n^\mu\rceil}}\sum _{k=1}^{\infty}  k^{-1-\epsilon/2} 
\end{align*}
with some $C_{2}$ so
$$\E[\Sigma_3] 
=  O\left(n^{\mu(\gamma-1)} \right). $$ 

Combining the estimates we have so far, we get 
\begin{equation}\label{obudavar}
E[\Sigma_0+\Sigma_1'+\Sigma_2+\Sigma_3]= 
O(n^{-\mu}+n^{-2}+
n^{\frac{-\mu+2\mu\gamma-1}{2}}+
n^{-2}+n^{\mu(\gamma-1)}).
\end{equation}
%Note that the dominant terms here
%are the second and the third.
Notice that $\E\vert \theta_t - \overline{y}_t\vert$  is always finite, see Lemma \ref{momentus} above.
Use Gronwall's lemma and \eqref{csillagocska} to obtain
%from
%\begin{align*}
%    \E\vert \theta_t - \overline{y_t}\vert &\leq
%    C' \sum_{k=n^{\mu}}^{t-1} \frac{a_k}{c_k} 
%    \E\left|\theta_k-\overline{y_k}\right| + %\E[\Sigma_0+\Sigma_{2}+\Sigma_{3}[]
%\end{align*}    
the inequality
\begin{align*}
    \E[\vert \theta_{\lfloor t\rfloor} - \overline{y}_t\vert] &\leq 
    E[\Sigma_0+\Sigma_1'+\Sigma_{2}+\Sigma_{3}]
    \exp\left(C_{3}\int_{n^{\mu}}^{\lceil (n+1)^{\mu}\rceil} \frac{a_u}{c_u}\, du\right)
    \end{align*} 
    with some constant $C_{3}$.
%Taking expectations, 
%and using the H\"older inequality with powers $(1+\epsilon)$ and $q$ defined by 
%$1/q+2/(1+\epsilon)=1$:
%\begin{align*}
%    \E\vert \theta_t - \overline{y_t}\vert\\ &\leq E^{1/(1+\epsilon)}(\Sigma_{2}+\Sigma_{3})^{1+\epsilon}
%    \E\exp(qC'(\max_{0\leq i\leq n^{\mu}}|\theta_{k}|+|\overline{y}_{k}|))\exp((1+\epsilon)\sum_{k=n^{\mu}}^{t-1} \frac{a_k}{c_k}).
%    \end{align*} 
From Lemma \ref{colis} it is also easy to check that 
$\E\vert \theta_t - \theta_{\lfloor t\rfloor}\vert=O(n^{-\mu})$.
Note furthermore that the terms $n^{-\mu}$ and $n^{\mu(\gamma-1)}$ are
always negligible in \eqref{obudavar}.
These observations lead to
\begin{align*}    
& \E\vert \theta_t - \overline{y}_t\vert \\
% &\leq   C''\left(\sum_{k=n^{\mu}}^{t-1} \left(\frac{a_k}{c_k}\right)^2\right)^{1/2} + C'''\sum_{k=n^{\mu}}^{t-1} a_kc_k \\
    & =O(n^\frac{-\mu+2\mu\gamma-1}{2}+ n^{-2})\exp\left(C_{4} n^{\mu\gamma-1} \right) \\
     & =  O(n^{\frac{1}{2}-\frac{1}{2\gamma}}+ n^{-2})
\end{align*}
with some $C_{4}$, finsihing the proof.
\end{proof}

\begin{proof}[Proof of Theorem \ref{thm:convergencrate}]
    %Subdivide $[1,\infty)$ into the intervals $[\lceil i^{\mu}\rceil,\lceil (i+1)^{\mu}\rceil)$ for $i=1,2,\ldots$, and approximate $\theta_t$ 
    %there by $\overline{y}_t=y(t,i^{\mu},\theta_{i^{\mu}})$. 
    Denote $$d_i=\sup_{\lceil i^{\mu}\rceil\leq s< \lceil (i+1)^{\mu}\rceil}
    \E\vert \theta_s- \overline{y}_s\vert,\ i=1,2,\ldots$${}
    By Fatou's lemma, we also have
    $$
    \E\vert \theta_{\lceil (i+1)^{\mu}\rceil}- \overline{y}_{\lceil (i+1)^{\mu}\rceil-}\vert\leq d_{i}
    $$
    where $\overline{y}_{s-}$ denotes the left limit of $\overline{y}$ at $s$.
    
    It follows from Lemma \ref{lem:dec_gain}, that $d_i=O(i^{-\beta})$. Combining this with  Assumption \ref{asp:stability} and using telescoping sums we get,
    for each integer $N\geq 1$,
    \begin{align*}
        \E\vert y(\lceil {N^{\mu}}\rceil ,1,\theta_{1})-\theta_{\lceil N^{\mu}\rceil}\vert &= \E\vert y(\lceil N^{\mu}\rceil ,1,\theta_1)-y(\lceil N^{\mu}\rceil,\lceil N^{\mu}\rceil ,\theta_{\lceil N^{\mu}\rceil})\vert \\
        &\leq \sum_{i=2}^{N} \E\left\vert y(\lceil N^{\mu}\rceil,\lceil(i-1)^{\mu}\rceil,
        \theta_{\lceil (i-1)^{\mu}\rceil})-y(\lceil N^{\mu}\rceil,\lceil i^{\mu}\rceil,\theta_{\lceil i^{\mu}\rceil})\right\vert \\
        &\leq \sum_{i=2}^{N} \E\left\vert y(\lceil N^{\mu}\rceil,\lceil i^{\mu}\rceil,y(\lceil i^{\mu}\rceil,\lceil (i-1)^{\mu}\rceil,\theta_{\lceil(i-1)^{\mu}\rceil}))-
        y(\lceil N^{\mu}\rceil,\lceil i^{\mu}\rceil,\theta_{\lceil i^{\mu}\rceil})\right\vert \\
        &\leq C^{*}\sum_{i=2}^{N} \left(\frac{i+1}{N} \right)^{\alpha\mu} d_{i-1} = O(N^{-\beta+1}),
    \end{align*}
    noting that $y(\lceil i^{\mu}\rceil,\lceil(i-1)^{\mu}\rceil,\theta_{
    \lceil (i-1)^{\mu}\rceil})$
    equals the left limit $\overline{y}_{\lceil i^{\mu}\rceil -}$.
    A similar argument provides, for all $t\in(\lceil N^{\mu}\rceil ,\lceil (N+1)^{\mu}\rceil)$, 
    $$\E\vert \theta_t- y(t,1,\theta_{1})\vert =O(N^{-\beta+1}).$$
    Taking $\mu$th root we obtain 
    $$\E\vert \theta_t- y(t,1,\theta_{1})\vert=O(t^{\frac{-\beta+1}{\mu}}),\ t\geq 1.$$
    To conclude, note that by the stability Assumption \ref{asp:stability},  
    $|y(t,1,\theta_{1})-\theta^{*}|\leq C^{*}|\theta_1-\theta^{*}|t^{-\alpha}$ and that $E|\theta_1|<\infty$,
    as easily seen using Lemma \ref{colis}.
    %and therefore $$\E\vert \theta_t- \theta^{*}\vert\leq C t^{\frac{-\beta+1}{\mu}},$$ whenever $\alpha\geq\frac{-\beta+1}{\mu} $.
\end{proof}

 \end{subsection}

\begin{subsection}{Fixed gain stochastic approximation}
Define $T=\frac{c}{a}$. For $nT\leq t<(n+1)T$, define $\overline{y}_t=y(t,nT,\theta_{nT})$, i.e. the solution of \eqref{ode_dec_gain} with the initial condition $y_{nT}=\theta_{nT}$. We use the piece-wise linear extension $\overline{\theta}_t$ of $\theta_t$ and the piece-wise constant extension $H(t,\theta)$ of $H(\theta,X_{k+1},c)$ as defined in the decreasing gain setting, but $a$ and $c$ are now constants.

\begin{lemma} \label{lem:fixed_gain}
Let Assumptions \ref{asp:G},\ref{asp:H-special_form}, \ref{smoothness}, 
\ref{asp:mix} and \ref{asp:stability} hold. Then for $t\in[nT,(n+1)T]$ there is $\overline{C}>0$ such that 
$\E|\theta_t-\overline{y}_{t}|\leq \overline{C} \max(c^2,\sqrt{\frac{a}{c}})$.
\end{lemma}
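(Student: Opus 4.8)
The plan is to mirror the decreasing-gain argument of Lemma~\ref{lem:dec_gain}, but in the fixed-gain regime where $\lambda_k\equiv a$ and $c_k\equiv c$ are constant and the relevant time scale is $T=c/a$. The plan is to bound $\E|\theta_t-\overline{y}_t|$ on a single window $[nT,(n+1)T]$ by splitting the difference $\theta_{\lfloor t\rfloor}-\overline{y}_t$ into the same four pieces $\Sigma_0+\Sigma_1+\Sigma_2+\Sigma_3$ as before: a boundary term from the fractional part of the integral; a Lipschitz-comparison term; a martingale-type fluctuation term; and a bias term coming from the finite-difference approximation of the gradient together with the conditional-versus-unconditional expectation of $H$. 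The key change is that the constant step data makes all the integrals over a window of length $T$ comparable to multiplying a per-unit bound by $T$, and the mixing sums telescope against the geometric decay $e^{-\alpha a(t-s)}$ supplied by the exponentially-stable ODE~\eqref{ode_fixed_gain}.

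First I would treat $\Sigma_0$, the fractional-step remainder: since $a_u$ is replaced by the constant $a$ and $G$ grows at most linearly, Lemma~\ref{momentus} (which holds under the present hypotheses) gives $\E[\Sigma_0]=O(a)$, which is dominated by $\sqrt{a/c}$ as soon as $c\leq 1$. Next, for $\Sigma_1$ I would invoke the conditional Lipschitz estimate of Lemma~\ref{conli}: applying the tower property with $k=\lfloor u\rfloor$ yields a bound of the form $\frac{a}{c}\int \E|\theta_{\lfloor u\rfloor}-\overline{y}_u|\,du$ plus a term $\frac{a}{c}\,c^2 T = c^2 \cdot (c/a)\cdot(a/c)=c^2$ after integrating over the window, so the ``source'' part of $\Sigma_1$ contributes $O(c^2)$. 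For $\Sigma_2$ I would use the moment/mixing bounds of Lemma~\ref{lem:M-Gamma-estimate} through the maximal inequality (Theorem~\ref{elso}) exactly as in the decreasing-gain proof, this time with $f_t=a/c$: the resulting bound is of order $\left(\int_{nT}^{(n+1)T}(a/c)^2\,du\right)^{1/2}(1+D^2)=\big((a/c)^2\,T\big)^{1/2}=\sqrt{a/c}$, which is precisely the second term in the claimed rate. Finally $\Sigma_3$ splits, as in~\eqref{mangalica}, into a finite-difference bias controlled by $L_G c$ (integrated against $a$ over length $T$, giving $O(c^2)$) and a conditional-expectation discrepancy controlled by the summable mixing bound of Assumption~\ref{asp:mix}, giving a contribution of order $\frac{a}{c}\sum_k k^{-1-\epsilon/2}=O(a/c)=O(\sqrt{a/c}\cdot\sqrt{a/c})$, which is again dominated by $\sqrt{a/c}$ for small $a/c$.

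Having collected $\E[\Sigma_0+\Sigma_1'+\Sigma_2+\Sigma_3]=O\!\left(\max(c^2,\sqrt{a/c})\right)$, I would close the loop with Gronwall's lemma applied to the residual $\Sigma_1$ integral, noting that the Gronwall exponent $\exp\!\big(C_3\int_{nT}^{(n+1)T}\frac{a}{c}\,du\big)=\exp(C_3)$ is a bounded constant in the fixed-gain case (since $\frac{a}{c}\cdot T=1$), unlike the vanishing exponent in the decreasing-gain analysis. This boundedness of the Gronwall factor is what keeps the per-window estimate uniform in $n$. I would also add the easy observation, via Lemma~\ref{colis}, that $\E|\theta_t-\theta_{\lfloor t\rfloor}|=O(a)$ so that passing between $\theta_t$ and $\theta_{\lfloor t\rfloor}$ costs nothing at this order.

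The main obstacle I anticipate is the bookkeeping in $\Sigma_2$ and $\Sigma_3$: one must verify that the maximal inequality of Theorem~\ref{elso} and the mixing estimates of Lemmas~\ref{lem:M-Gamma-estimate} and the Assumption~\ref{asp:mix} bounds apply verbatim after the affine time shift $t\mapsto t+nT$ with the constant weight $f_t=a/c$, and that the constant $\tilde M_3+\tilde\Gamma_3$ can be taken uniform because $\overline{y}$ is cut off to $B_D$ exactly as in~\eqref{defw}. The delicate point is confirming that the exponent accounting works out to give the clean threshold behaviour $\max(c^2,\sqrt{a/c})$ rather than a sum of incomparable terms; this relies on the fact that in the fixed-gain regime every integral over a window of length $T=c/a$ is effectively a single-step estimate rescaled by $T$, so the four error sources reduce to just the two magnitudes $c^2$ (from finite differencing) and $\sqrt{a/c}$ (from noise fluctuation), which is why the optimal tuning $c=a^{1/5}$ balances $c^2=a^{2/5}$ against $\sqrt{a/c}=a^{2/5}$.
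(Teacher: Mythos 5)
Your proposal is correct and follows essentially the same route as the paper: the same decomposition $\Sigma_0+\Sigma_1+\Sigma_2+\Sigma_3$, the same supporting results (Lemmas \ref{conli}, \ref{colis}, \ref{momentus}, \ref{lem:M-Gamma-estimate}, Theorem \ref{elso}), a Gronwall closing step with bounded exponent since $\frac{a}{c}T=1$, and the same balancing $c=a^{1/5}$. In fact your write-up is more explicit than the paper's own proof, which simply states the four window bounds and invokes the decreasing-gain estimates verbatim.
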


\begin{proof}

%\begin{align*}
%    \vert \theta_t - \overline{y}_t\vert &\leq a \left| \int\limits_{nT}^{t} \left( H(u,\theta_u)-G(\overline{y_u}) \right) \,du \right|
%    \leq a \left| \int\limits_{nT}^{t} \left( H(u,\theta_{\lfloor u\rfloor})-H(u,\theta_u) \right)\,du \right| \\
%    a \left| \int\limits_{nT}^{t} \left( H(u,\theta_u)-H(u,\overline{y}_u) \right)\,du \right| \\
%    &+ a \left| \int\limits_{nT}^{t} \left( H(u,\overline{y_u})-\E[H(u,\overline{y_u})| \mathcal{F}_{nT} ] \right) \,du\right| 
%    +a \left| \int\limits_{nT}^{t} \left( \E[H(u,\overline{y_u})| \mathcal{F}_{nT} ]-G(\overline{y_u}) \right)\,du\right| \\
%    &=:\Sigma_0+\Sigma_1+\Sigma_2+\Sigma_3
%\end{align*}

Using essentially the same estimates we derived in the decreasing gain setting, for fixed $a$ and $c$ we get 
\begin{align}
&\E[\Sigma_0]\leq C_0 a
%{\frac{a}{c}}
\\
    &\E[\Sigma_1]\leq C_{1}\left[\frac{a}{c} \sum_{nT}^{t-1} \E|\theta_k-\overline{y_k}|  +c^{2}\right]\\
    &\E[\Sigma_2]\leq C_2 \left( \sum_{nT}^{t-1} \left(\frac{a^2}{c^2}\right) \right)^{1/2}\leq C_2 \left( \frac{c}{a}\frac{a^2}{c^2} \right)^{1/2} =C_2\sqrt{\frac{a}{c}}\\
    & \E[\Sigma_3]\leq C_{3}\left[\frac{a}{c}+\sum_{nT}^{t-1} ac\right] =C_3 Tac +C_{3}\frac{a}{c} =O\left(c^2+\frac{a}{c}\right),
\end{align}
with suitable constants $C_{0},C_{1},C_{2},C_{3}$.
Combine these estimates and use Gronwall-lemma to get
the statement.
%\begin{align*}
%    \E\vert\theta_t-\overline{y}_t\vert\leq & \left( C_3 c^2+C_2\sqrt{\frac{a}{c}} \right)\exp\left(C_1\right).
%\end{align*}
To choose optimally, set $c^2=\sqrt{\frac{a}{c}}$, that is $c=a^{\frac{1}{5}}$. In this case $\E\vert\theta_t-\overline{y}_t\vert\leq C_4 a^{\frac{2}{5}}$ for some $C_4$. 
\end{proof}

\begin{proof}[Proof of Theorem \ref{thm:fixed_gain}]
%Subdivide $[0,\infty)$ into the intervals 
%$[iT,(i+1)T)$ for $i=1,2,\dots$, and estimate $\theta_t$ there by 
%$\overline{y}_t=\overline{y}_t^i=y(t,iT,\theta_{iT})$.  
Denote $$d_i=\sup_{iT\leq s<(i+1)T}\E\vert \theta_s- \overline{y}^i_s\vert.$$
    It follows from Lemma \ref{lem:fixed_gain}, that $d_i\leq \overline{C} \max(c^2,\sqrt{\frac{a}{c}})$. 
    Combining this with Assumption \ref{asp:stability} and using telescoping sums we get
    \begin{align*}
        \E\vert y(NT,1,\theta_1)-\theta_{NT}\vert &= \E\vert y(NT,1,\theta_1)-y(NT,NT,\theta_{NT})\vert \\
        &\leq \sum_{i=2}^{N} \E\left\vert y(NT,(i-1)T,\theta_{(i-1)T})-y(NT,iT,\theta_{iT})\right\vert \\
        &\leq \sum_{i=2}^{N} \E\left\vert y(NT,iT,y(iT,(i-1)T,\theta_{(i-1)T}))-y(NT,iT,\theta_{iT})\right\vert \\
       &\leq \sum_{i=2}^{N} \left(C^{*} e^{-a\alpha(NT-iT)} \right) d_{i-1}\leq  \hat{C} \max\left(c^2,\sqrt{\frac{a}{c}}\right),
    \end{align*}
    with some $\hat{C}$ since $\sum_{i=2}^N e^{-a\alpha(NT-iT)}$ has an upper bound independent of $N$. We similarly get

%    Now for $t\in(NT,(N+1)T)$ 
    $$\sup_{NT\leq t<(N+1)T}
    \E\vert \theta_t- {y}(t,1,\theta_{1})\vert\leq \check{C}\max\left(c^2,\sqrt{\frac{a}{c}}\right)$$
    with some $\check{C}$.
    To conclude, note that by the stability Assumption \ref{asp:stability},  
    $|y(t,1,\theta_{1})-\theta^{*}|\leq C^{*}|\theta_1-\theta^{*}|e^{-a\alpha t}$ and therefore 
    $$\E\vert \theta_t- \theta^{*}\vert=O\left(\max\left(c^2,\sqrt{\frac{a}{c}}\right)+
    e^{-a\alpha t}\right).$$ 
    %Using stability just like in the previous proofs we arrive at the proof of the statement.
\end{proof}
\end{subsection}

\end{section}

\section{Auxiliary results}\label{auxiliary}

We define continuous-time analogues of the key quantities $M$ and $\Gamma$ from Assumption \ref{asp:mix}
and establish a pivotal
maximal inequality for them.

Consider a continuous-time filtration $(\mathcal{R}_t)_{t\in\mathbb{R}_+}$ as well as a decreasing family of sigma-fields
$(\mathcal{R}_t^+)_{t\in\mathbb{R}_+}$. We assume that $\mathcal{R}_t$ is
independent of $\mathcal{R}_t^+$, for all $t\in\mathbb{R}_+$.

We consider an $\mathbb{R}^{d}$-valued continuous-time stochastic process $(W_{t})_{t\in\mathbb{R}_+}$
which is progressively measurable (i.e.\ $W:[0,t]\times\Omega\to\mathbb{R}^d$ is $\mathcal{B}([0,t])\otimes\mathcal{R}_t$-measurable
for all $t\in\mathbb{R}_+$).
%We call the given process $L^r$-\emph{bounded} for some $r\geq 1$ if
%\[
%\sup_{t\in\mathbb{R}_+}\E[|X_{t}|^{r}]<\infty.
%\]
{}

From now on we assume that $W_{t}\in L^{1}$, $t\in\mathbb{R}_{+}$.
Fix $r\geq 1$. We define the quantities
\begin{eqnarray*}
	\tilde{M}_r &:=& \esssup_{t \in\mathbb{R}_+} 
	\E^{1/r}\left[|W_{t}|^{r}\vert {\mathcal{R}_{0}}\right],\\ 
	\tilde{\gamma}_r(\tau) &:=&  \esssup_{t\geq\tau}
	\E^{1/r}[|W_{t}- \E[W_{t}\vert {\mathcal{R}_{t-\tau}^+\vee \mathcal{R}_0}]|^r\vert {\mathcal{R}_{0}}],\ \tau\in\mathbb{R}_+ ,
\end{eqnarray*}
and set $\tilde{\Gamma}_r := \sum_{\tau=0}^{\infty}\tilde{\gamma}_r(\tau)$.

Now we recall a powerful maximal inequality,
Theorem B.3 of \cite{bernoulli}.
 
\begin{theorem}\label{elso} Let $(W_{t})_{t \in \mathbb{R}_{+}}$ be $L^{r}$-bounded for some $r> 2$
and let $\tilde{M}_{r}+\tilde{\Gamma}_{r}<\infty$ a.s.
Assume $\E[{W_{t}}\vert{\mathcal{R}_{0}}]=0$ a.s.\ for $t\in\mathbb{R}_+$.
Let $f:[0,T]\to\mathbb{R}$ be $\mathcal{B}([0,T])$-measurable with $\int_{0}^{T}f_{t}^{2}\, d t<\infty$. 
Then there is a constant $C'(r)$ such that
\begin{equation}\label{erd}
\E^{1/r}\left[\sup_{s\in [0,T]}\left|\int_{0}^{s} f_t W_t\, dt \right|^r\vert {\mathcal{R}_{0}}\right]
\leq C'(r)\left( \int_{0}^{T} f_t^{2}\, dt \right)^{1/2} [\tilde{{M}}_r +\tilde{\Gamma}_r],
\end{equation}
almost surely.\hfill $\Box$
\end{theorem}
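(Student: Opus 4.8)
The plan is to reduce \eqref{erd} to a conditional Burkholder--Davis--Gundy (BDG) estimate through a lag decomposition of $W$ that exposes a (reversed) martingale structure, and then to promote the resulting moment bound to the stated maximal bound by a dyadic argument exploiting the $L^2$-type dependence on $f$. Throughout I would work with the conditional versions (given $\mathcal{R}_0$) of Doob's, Minkowski's and Young's inequalities.

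First I would dispose of the supremum. I would prove the estimate for a fixed upper limit, i.e. establish
$$\E^{1/r}\Big[\Big|\int_0^{T} f_t W_t\, dt\Big|^r \,\big|\, \mathcal{R}_0\Big]\le C(r)\Big(\int_0^T f_t^2\, dt\Big)^{1/2}[\tilde M_r+\tilde\Gamma_r],$$
observing that the same bound then holds with $T$ replaced by any $s\le T$ (apply it to $f\1_{[0,s]}$). Because the right-hand side is controlled by $(\int_0^T f^2)^{1/2}$, which is additive over disjoint subintervals in the $\ell^2$-sense, a standard dyadic chaining/maximal-inequality device (the continuous-time analogue of the Móricz--Serfling argument: bound the maximum over a dyadic partition of $[0,T]$ and control the oscillation inside each dyadic cell) upgrades the fixed-endpoint estimate to the supremum over $s\in[0,T]$ at the cost of an $r$-dependent constant.

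The heart of the matter is the fixed-endpoint moment bound. Since $\E[W_t\mid\mathcal{R}_0]=0$, I would use the telescoping decomposition
$$W_t=\sum_{\tau\ge 1} d_t^{\tau},\qquad d_t^{\tau}:=\E[W_t\mid \mathcal{R}_{t-\tau}^+\vee\mathcal{R}_0]-\E[W_t\mid \mathcal{R}_{t-\tau+1}^+\vee\mathcal{R}_0],$$
which converges in the conditional $L^r$ sense because $\|W_t-\E[W_t\mid\mathcal{R}_{t-\tau}^+\vee\mathcal{R}_0]\|$ is governed by $\tilde\gamma_r(\tau)$. For each fixed lag $\tau$, the family $(d_t^{\tau})_t$ consists of martingale differences relative to the decreasing family $\mathcal{R}^+$ shifted by $\tau$: each $d_t^{\tau}$ is $\mathcal{R}_{t-\tau}^+\vee\mathcal{R}_0$-measurable with vanishing conditional expectation given $\mathcal{R}_{t-\tau+1}^+\vee\mathcal{R}_0$. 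Hence $\int_0^T f_t d_t^{\tau}\, dt$ is a conditional martingale transform, and a conditional BDG/Rosenthal inequality for $r>2$, followed by Minkowski in $L^{r/2}$ applied to the quadratic variation, yields
$$\E^{1/r}\Big[\Big|\int_0^T f_t d_t^{\tau}\, dt\Big|^r\,\big|\,\mathcal{R}_0\Big]\le C(r)\Big(\int_0^T f_t^2\, dt\Big)^{1/2}\,\esssup_t \E^{1/r}[|d_t^{\tau}|^r\mid\mathcal{R}_0],$$
where the $(\int f^2)^{1/2}$ factor is precisely what the $\ell^2$ control of a martingale delivers. I would then bound $\esssup_t\E^{1/r}[|d_t^{\tau}|^r\mid\mathcal{R}_0]$ by $\tilde\gamma_r(\tau-1)+\tilde\gamma_r(\tau)$ (absorbing the first term into $\tilde M_r$), sum over $\tau$ by conditional Minkowski, and recognize $\sum_\tau \tilde\gamma_r(\tau)=\tilde\Gamma_r$.

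The step I expect to be the main obstacle is making this reversed-martingale structure and the conditional BDG inequality rigorous in continuous time: the orthogonality producing the crucial $(\int f^2)^{1/2}$ scaling (rather than the weaker $\int|f|$) lives with respect to the future filtration $\mathcal{R}^+$, not the forward filtration to which $W$ is adapted, so I must handle the interplay of $\mathcal{R}_t$, $\mathcal{R}_t^+$ and the conditioning on $\mathcal{R}_0$ with care, and justify the continuous-time summation over lags (convergence of the series together with the progressive measurability of each $d^{\tau}$). Once the decomposition and the conditional martingale moment inequality are in place, the remaining estimates are routine applications of Minkowski's and Young's inequalities.
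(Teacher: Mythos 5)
You should first be aware that the paper contains no proof of Theorem \ref{elso} to compare against: the statement is quoted, with the terminating $\Box$, from Theorem B.3 of \cite{bernoulli}, and that result in turn descends from Gerencs\'er's theory of $L$-mixing processes \cite{gerencser1989class}. Measured against the proof given in those sources, your outline follows essentially the same route, and it is the right one: telescope $W_t$ over integer lags via $d_t^{\tau}=\E[W_t\vert\mathcal{R}_{t-\tau}^+\vee\mathcal{R}_0]-\E[W_t\vert\mathcal{R}_{t-\tau+1}^+\vee\mathcal{R}_0]$ (the base term $\E[W_t\vert\mathcal{R}_t^+\vee\mathcal{R}_0]$ vanishes by the independence of $\mathcal{R}_t$ and $\mathcal{R}_t^+$ together with $\E[W_t\vert\mathcal{R}_0]=0$); exploit, for each fixed lag, the reversed martingale-difference structure relative to the decreasing family $\mathcal{R}^+$ to get a Burkholder-type bound scaling as $\bigl(\int_0^T f_t^2\,dt\bigr)^{1/2}$; then sum over lags against $\tilde\Gamma_r$. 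Your reduction of the supremum to a fixed endpoint via a M\'oricz-type dyadic argument is also legitimate precisely because $r>2$ makes the endpoint bound superadditive of order $r/2>1$ in $\int f^2$.

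Two technical gaps remain, one of which you flag and one you do not. The obstacle you identify is real: in continuous time $(d_t^{\tau})_t$ is \emph{not} a martingale-difference family, since $\E[d_t^{\tau}\vert\mathcal{R}_{s-\tau+1}^+\vee\mathcal{R}_0]=0$ only holds for $s\geq t$, and there is no orthogonality between times closer than the lag grid spacing $1$. The standard repair is stratification: write $\int_0^T f_t d_t^{\tau}\,dt=\int_0^1\sum_{k\,:\,u+k\leq T} f_{u+k}\,d_{u+k}^{\tau}\,du$, apply the discrete conditional Burkholder inequality to the inner sum (along a unit-spaced grid the reverse martingale structure is genuine), and then use Cauchy--Schwarz in $u$ over $[0,1]$ to recover $\bigl(\int_0^T f_t^2\,dt\bigr)^{1/2}$. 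The gap you do not notice concerns your claim that $W_t=\sum_{\tau\geq 1}d_t^{\tau}$ converges "because of $\tilde\gamma_r(\tau)$": for fixed finite $t$ the coefficient $\tilde\gamma_r(\tau)$ controls nothing once $\tau>t$ (the essential supremum defining it runs over $t\geq\tau$ only, and $\mathcal{R}_{t-\tau}^+$ is not even defined for $\tau>t$). The telescoping must stop at $\tau=\lfloor t\rfloor$, leaving a remainder $\rho_t:=W_t-\E[W_t\vert\mathcal{R}_{t-\lfloor t\rfloor}^+\vee\mathcal{R}_0]$ with $\E^{1/r}[|\rho_t|^r\vert\mathcal{R}_0]\leq\tilde\gamma_r(\lfloor t\rfloor)$; its contribution is then bounded by $\int_0^T|f_t|\,\tilde\gamma_r(\lfloor t\rfloor)\,dt\leq\bigl(\int_0^T f_t^2\,dt\bigr)^{1/2}\bigl(\sum_{\tau\geq 0}\tilde\gamma_r^2(\tau)\bigr)^{1/2}\leq\bigl(\int_0^T f_t^2\,dt\bigr)^{1/2}\tilde\Gamma_r$ by Cauchy--Schwarz. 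With these two repairs, both of which appear in the cited proofs, your sketch becomes a complete argument.
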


We also recall Lemma A.1 of \cite{chau2016fixed}.

\begin{lemma}\label{mall} Let 
$\mathcal{G},\mathcal{H}\subset\mathcal{F}$
be sigma-algebras. Let $X,Y\in\mathbb{R}^{d}$ be random variables in $L^p$ such that $Y$ is measurable with
	respect to $\mathcal{H}\vee\mathcal{G}$.
	Then for any $p\ge 1$,
	$$
	E^{1/p}\left[\vert X-E[X\vert\mathcal{H}\vee\mathcal{G}]\vert^p\big\vert \mathcal{G}\right]
	\leq 2E^{1/p}\left[\vert X-Y\vert^p\big\vert \mathcal{G}\right].
	$$
	\hfill $\Box$
\end{lemma}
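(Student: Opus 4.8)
The plan is to reduce the whole statement to a single application of the conditional triangle inequality after a convenient algebraic rewriting. First I would abbreviate $\mathcal{K}:=\mathcal{H}\vee\mathcal{G}$ and set $Z:=X-Y$. Since $Y$ is $\mathcal{K}$-measurable we have $E[Y\vert\mathcal{K}]=Y$, hence $E[Z\vert\mathcal{K}]=E[X\vert\mathcal{K}]-Y$ and therefore
$$
Z-E[Z\vert\mathcal{K}]=(X-Y)-\bigl(E[X\vert\mathcal{K}]-Y\bigr)=X-E[X\vert\mathcal{K}].
$$
Thus the left-hand side of the claimed inequality is exactly $E^{1/p}[\vert Z-E[Z\vert\mathcal{K}]\vert^p\vert\mathcal{G}]$, and it suffices to bound this quantity by $2E^{1/p}[\vert Z\vert^p\vert\mathcal{G}]$, since $Z=X-Y$.

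Next I would invoke the conditional Minkowski inequality, i.e.\ the triangle inequality for the conditional seminorm $Z\mapsto E^{1/p}[\vert Z\vert^p\vert\mathcal{G}]$, to split
$$
E^{1/p}\bigl[\vert Z-E[Z\vert\mathcal{K}]\vert^p\vert\mathcal{G}\bigr]\leq E^{1/p}\bigl[\vert Z\vert^p\vert\mathcal{G}\bigr]+E^{1/p}\bigl[\vert E[Z\vert\mathcal{K}]\vert^p\vert\mathcal{G}\bigr].
$$
The remaining task is to control the second summand by the first. Here I would use that, by construction, $\mathcal{G}\subset\mathcal{K}$. Conditional Jensen applied to the convex map $u\mapsto\vert u\vert^p$ gives $\vert E[Z\vert\mathcal{K}]\vert^p\leq E[\vert Z\vert^p\vert\mathcal{K}]$ pointwise; taking $E[\,\cdot\,\vert\mathcal{G}]$ of both sides and using the tower property, which is legitimate precisely because $\mathcal{G}\subset\mathcal{K}$, yields $E[\vert E[Z\vert\mathcal{K}]\vert^p\vert\mathcal{G}]\leq E[\vert Z\vert^p\vert\mathcal{G}]$. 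Substituting this back produces the factor $2$ and, upon recalling $Z=X-Y$, completes the argument.

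There is no genuine obstacle in this lemma; it is a routine manipulation of conditional expectations. The only point requiring care is the inclusion $\mathcal{G}\subset\mathcal{H}\vee\mathcal{G}$, which is exactly what licenses the tower-property step — were the conditioning $\sigma$-algebra $\mathcal{K}$ not to contain $\mathcal{G}$, the contraction $E[\vert E[Z\vert\mathcal{K}]\vert^p\vert\mathcal{G}]\leq E[\vert Z\vert^p\vert\mathcal{G}]$ would fail and the scheme would break down. The constant $2$ arises purely from the triangle inequality and is not claimed to be optimal, which is all that is needed for the applications in the main proofs.
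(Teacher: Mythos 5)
Your proof is correct: writing $Z=X-Y$, observing $Z-E[Z\vert\mathcal{H}\vee\mathcal{G}]=X-E[X\vert\mathcal{H}\vee\mathcal{G}]$, and then combining the conditional Minkowski inequality with the conditional Jensen/tower contraction $E[\vert E[Z\vert\mathcal{H}\vee\mathcal{G}]\vert^p\vert\mathcal{G}]\leq E[\vert Z\vert^p\vert\mathcal{G}]$ is exactly the standard argument, yielding the constant $2$. The present paper omits the proof entirely (it recalls the result as Lemma A.1 of \cite{chau2016fixed}), and your argument is essentially the one used there, so there is nothing to add.
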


\begin{lemma}\label{wales} Let the process $W_{t}$ be defined by \eqref{defw}.  
Taking the filtration $\mathcal{R}_{t}:=\mathcal{F}_{t+\lceil n^{\mu}\rceil}$ and
$\mathcal{R}^+_t:=\mathcal{F}_{t+\lceil n^{\mu}\rceil}^+$,
we get $\tilde{M}(W)+\tilde{\Gamma}(W)\leq C(1+D^{2})$
for some $C>0$.
\end{lemma}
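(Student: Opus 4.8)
The plan is to reduce the continuous-time quantities $\tilde M_3(W)$ and $\tilde\Gamma_3(W)$ to the discrete-time estimates of Lemma~\ref{lem:M-Gamma-estimate}, applied to the finite-difference field built from $J$. Write $N:=\lceil n^\mu\rceil$ and set
\[
\Phi(\theta,x):=\tfrac12\sum_{i=1}^d\bigl[J(\theta+c\mathbf{e}_i,x)-J(\theta-c\mathbf{e}_i,x)\bigr]\mathbf{e}_i,\qquad c:=c_{\lfloor t\rfloor+N}\le 1,
\]
the inequality $c\le 1$ coming from Assumption~\ref{ac}. The decisive simplification is that the factor $c_{t+N}$ in \eqref{defw} cancels the $1/(2c)$ hidden in $H$, so that $W_t=\1_{\{\overline y_{t+N}\in B_D\}}\bigl(\Phi(\overline y_{t+N},X_{\lfloor t\rfloor+1+N})-\E[\Phi(\overline y_{t+N},X_{\lfloor t\rfloor+1+N})\mid\mathcal{F}_N]\bigr)$. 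Two structural facts drive the argument: first, since $\overline y$ solves the deterministic ODE \eqref{ode_dec_gain} started from the $\mathcal{F}_N$-measurable value $\theta_N$, the argument $\overline y_{t+N}$ is $\mathcal{F}_N=\mathcal{R}_0$-measurable; second, on the support of the indicator one has $\overline y_{t+N}\in B_D$, hence $\overline y_{t+N}\pm c\mathbf{e}_i\in B_{D+1}$.

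First I would bound $\tilde M_3(W)$. By Lemma~\ref{lem:quad_growth} each term $J(\overline y_{t+N}\pm c\mathbf{e}_i,\cdot)$ is at most $C_l(1+(D+1)^2)$ on the indicator support, whence $|\Phi|\le\sqrt d\,C_l(1+(D+1)^2)$ and, after subtracting its conditional mean, $|W_t|\le 2\sqrt d\,C_l(1+(D+1)^2)$ almost surely. This gives $\tilde M_3(W)=O(1+D^2)$ at once.

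The bulk of the work is $\tilde\Gamma_3(W)$. Here I would first record the cancellation: because both the indicator and the centering term $\E[\Phi\mid\mathcal{F}_N]$ are $\mathcal{R}_0$-measurable, hence $\mathcal{R}_{t-\tau}^+\vee\mathcal{R}_0$-measurable, they drop out of the increment, leaving
\[
W_t-\E[W_t\mid\mathcal{R}_{t-\tau}^+\vee\mathcal{R}_0]=\1_{\{\overline y_{t+N}\in B_D\}}\bigl(\Phi(\overline y_{t+N},X_{\lfloor t\rfloor+1+N})-\E[\Phi(\overline y_{t+N},X_{\lfloor t\rfloor+1+N})\mid\mathcal{R}_{t-\tau}^+\vee\mathcal{R}_0]\bigr).
\]
Using $|v|\le\sum_i|v^i|$ for the Euclidean norm together with Minkowski's inequality, I would split this into the $d$ scalar increments of $J(\overline y_{t+N}+c\mathbf{e}_i,\cdot)-J(\overline y_{t+N}-c\mathbf{e}_i,\cdot)$ and, by Minkowski once more, into the individual $J$-terms. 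Since $\overline y_{t+N}\pm c\mathbf{e}_i$ is an $\mathcal{F}_N$-measurable element of $B_{D+1}$, a freezing (substitution) argument shows that each such term is dominated by the essential supremum over $\theta\in B_{D+1}$ occurring in $\gamma_3^N(\cdot,J(\theta,X))$; after identifying $\mathcal{R}_{t-\tau}^+\vee\mathcal{R}_0=\mathcal{F}_{t-\tau+N}^+\vee\mathcal{F}_N$ with the conditioning field of $\gamma_3^N$, one obtains $\tilde\gamma_3(\tau,W)\le d\,\gamma_3^N(\tau',J(\cdot,X))$ for an integer lag $\tau'$ differing from $\tau$ by at most one. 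Summing over $\tau$ and invoking $\Gamma_3^N(J(\cdot,X))\le L(1+(D+1)^2)$ from Lemma~\ref{lem:M-Gamma-estimate} then yields $\tilde\Gamma_3(W)=O(1+D^2)$, and adding the two bounds proves the claim.

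The step I expect to be the main obstacle is the precise bookkeeping in the last paragraph: checking that the $\mathcal{R}_0$-measurable indicator and centering term really cancel from every increment, confirming that the $\esssup_\theta$ in $\gamma_3^N$ legitimately absorbs the random but $\mathcal{F}_N$-measurable argument $\overline y_{t+N}$ (this is exactly why $M_r^n$ and $\Gamma_r^n$ were defined with an essential supremum over $\theta$), and controlling the at-most-unit gap between the continuous lag $\tau$ and the discrete lag $\tau'$, which only costs a harmless constant when summed. Once these are settled, the genuinely analytic work---taming the discontinuities of $J$ via the density bound of Assumption~\ref{smoothness} and summing the mixing coefficients under Assumption~\ref{asp:mix}---has already been done in Lemma~\ref{lem:M-Gamma-estimate}, so no new estimate of that type is required.
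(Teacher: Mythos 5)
Your proposal is correct and follows essentially the same route as the paper: the paper's proof of Lemma~\ref{wales} is a one-line reduction to the estimates of Lemma~\ref{lem:M-Gamma-estimate}, applied with $\overline{y}_{t+\lceil n^{\mu}\rceil}$ substituted for $\theta$, which is exactly the reduction you carry out. Your write-up simply makes explicit the bookkeeping the paper leaves implicit --- the cancellation of the $\mathcal{F}_{\lceil n^{\mu}\rceil}$-measurable indicator and centering term, the freezing argument legitimized by the essential supremum in the definitions of $M_r^n$ and $\gamma_r^n$, the radius $D+1$ (rather than the paper's stated $D$) needed to accommodate the shifts $\pm c\mathbf{e}_i$, and the discrete--continuous lag discrepancy of at most one.
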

\begin{proof}
Estimations of  
Lemma \ref{lem:M-Gamma-estimate} with $R=D$ and with $\overline{y}_{t+\lceil n^{\mu}\rceil}$ instead
of $\theta$ imply the statement.	
\end{proof}

\section{Numerical experiments}\label{opt}
In what follows we present numerical results to check 
the convergence of the algorithm for a simple discontinuous function $J$, defined as

$$J(\theta,X)=
\begin{cases}
(\theta-X)^2 +1 \text{, if }X\leq\theta\\
(\theta-X)^2\text{, otherwise,}
\end{cases}$$
where $X$ is a square-integrable, absolutely continuous random variable. Clearly,
this function is not continuous in the parameter, but its
expectation \emph{is} continuous:
\begin{align*}
       U(\theta) &= \E J(X,\theta) =\int_{-\infty}^{\theta} ((x-\theta^2)+1)f(x)dx + \int_{\theta}^{\infty}(x-\theta)^2f(x)dx    \\
    & = \E (X-\theta)^2 + F(\theta)=\E X^2 -2\theta \E X +\theta^2 + F(\theta), 
\end{align*}
where $f(\cdot)$ and $F(\cdot)$ are the density function resp.\ the distribution function of $X$. 
Assuming that $F$ is differentiable, we need to solve 
$$\frac{\partial  U(\theta) }{\partial\theta}=-2\E X + 2\theta -f(\theta)=0$${}
in order to find $\argmin\E J(X,\theta)$.  

For the numerical examples we will use the recursion
\begin{align} \label{recursion_for_numerics}
    \theta_{k+1}=\theta_k + \frac{1}{k+k_0}\frac{J(\theta+(k+k_0)^{-1/5},X_{k+1})
-J(\theta-(k+k_0)^{-1/5},X'_{k+1})}{(k+k_0)^{-1/5}}.
\end{align}

To compute the expected error, Monte Carlo simulations were used with 
$10000$ sample paths and the number of steps $k$ ranging from $2^{8}$ to $2^{20}$. 
We fit regression on the log-log plot to get the convergence rate only on 
$[2^{13},2^{20}]$ and set $k_0=10000$ to avoid the initial fluctuations of the algorithm.
% for relatively large step size and iteration number in the beginning.

\subsection{Independent innovations}

In this section we assume that the consecutive 
``measurement noises'' $X_{n}$ are i.i.d. We consider three different choices for the distribution 
of the noise: normal, uniform and beta distributions. Note that normal distribution 
violates boundedness and for uniform distribution the differentiability of $F$ fails, however 
convergence is achieved even in these cases. We also distinguish between the case where the observations 
$X_{k+1}$ and $X'_{k+1}$ are the same and when they are independent. Here we refer back to 
Remark \ref{rem:X_same_or_different} where we point out that this choice does not influence 
our theoretical results, however it may make a visible difference numerically. This phenomenon 
has already been observed, see \cite{glasserman1992some} for more about the variance 
reduction technique called \textit{common random numbers} (CRN). The values in 
Table \ref{table:convergence_speed_iid} below represent the slope of linear regression we fit on the log-log plot of the average absolute error vs. the number of steps, together with the R-squared value measuring the goodness of the fit.

\begin{table}[h]
\begin{centering}

\begin{tabular}{l|l|l|}
\cline{2-3}
                                      & independent $X_{k+1}$,$X'_{k+1}$  & identical $X_{k+1}=X'_{k+1}$   \\ \hline
\multicolumn{1}{|l|}{N$(0,1)$}        & $-0.299$ $(R^2=0.999)$     & $-0.459$ $(R^2=0.999)$ \\ \hline
\multicolumn{1}{|l|}{U$([0,1])$}      &  $-0.14$ $(R^2=0.997) $          &  $-0.14$ $(R^2=0.997)$     \\ \hline
\multicolumn{1}{|l|}{Beta$(2,2)$} &  $-0.374 $ $(R^2=0.999)$          &     $-0.393 $ $(R^2=0.999)$       \\ \hline
\end{tabular}
\caption{Convergence speed for different distributions of i.i.d. noise}
\label{table:convergence_speed_iid}
\end{centering}
\end{table}

The lower limit that we theoretically achieved for the convergence rate in Theorem \ref{thm:convergencrate} was ${-0.2}$, however the numerical experiments we present show that the practical convergence rate can outperform this.

%See below the details on the calculations.

\subsubsection{Standard normal distribution}

Assume that $X\sim N(0,1)$. Then the function we aim to find the minimum of is 
$U_1(\theta) = 1+\theta^2+\Phi(\theta),$ where $\Phi$ denotes the the cumulative distribution function of standard normal distribution. We get the solution $\theta^{*} =-\sqrt{W\left(\frac{1}{8\pi}\right)}\approx -0.19569,$ where $W$ is the Lambert-W function. 

% \begin{figure}[ht]
% \begin{subfigure}{0.5\textwidth}
% \includegraphics[width=1\linewidth, height=5.5cm]{optimal_policy.eps}
% \caption{$J(\theta^{*},X)$}
% %\label{}
% \end{subfigure}
% \begin{subfigure}{0.5\textwidth}
% \includegraphics[width=1\linewidth, height=5.5cm]{obj_fn.eps}
% \caption{ $\E [J(\theta,X)]$}
% %\label{}
% \end{subfigure}
% %\caption{}
% \label{fig:obj_fn}
% \end{figure}

Figure \ref{fig:convergence_ind} illustrates the convergence of two variations of algorithm (\ref{recursion_for_numerics}) for $U_1$, starting the iteration from $\theta_0=-0.1$. On figure (\ref{fig:convergence_ind_ind}) we present the case where $X_{k+1}$ and $X'_{k+1}$ are independent on a log-log plot, we observe a convergence rate of $k^{-0.299}$ while (\ref{fig:convergence_ind_same}) shows the case where $X_{k+1}=X'_{k+1}$ which yields a convergence rate of $k^{-0.459}$. 

%range(8,21), [-3.8450498868331047, -3.8354272755834145, -3.930832088444683, -4.115687606837137, -4.355987344708388, -4.617561157690129, -4.911171020092392, -5.211016413124691, -5.505659459757309, -5.7954870058615064, -6.094715843155631, -6.432116694905872, -6.702157020642085]

\begin{figure}[ht]
\begin{subfigure}{0.5\textwidth}
\includegraphics[width=1\linewidth, height=5.5cm]{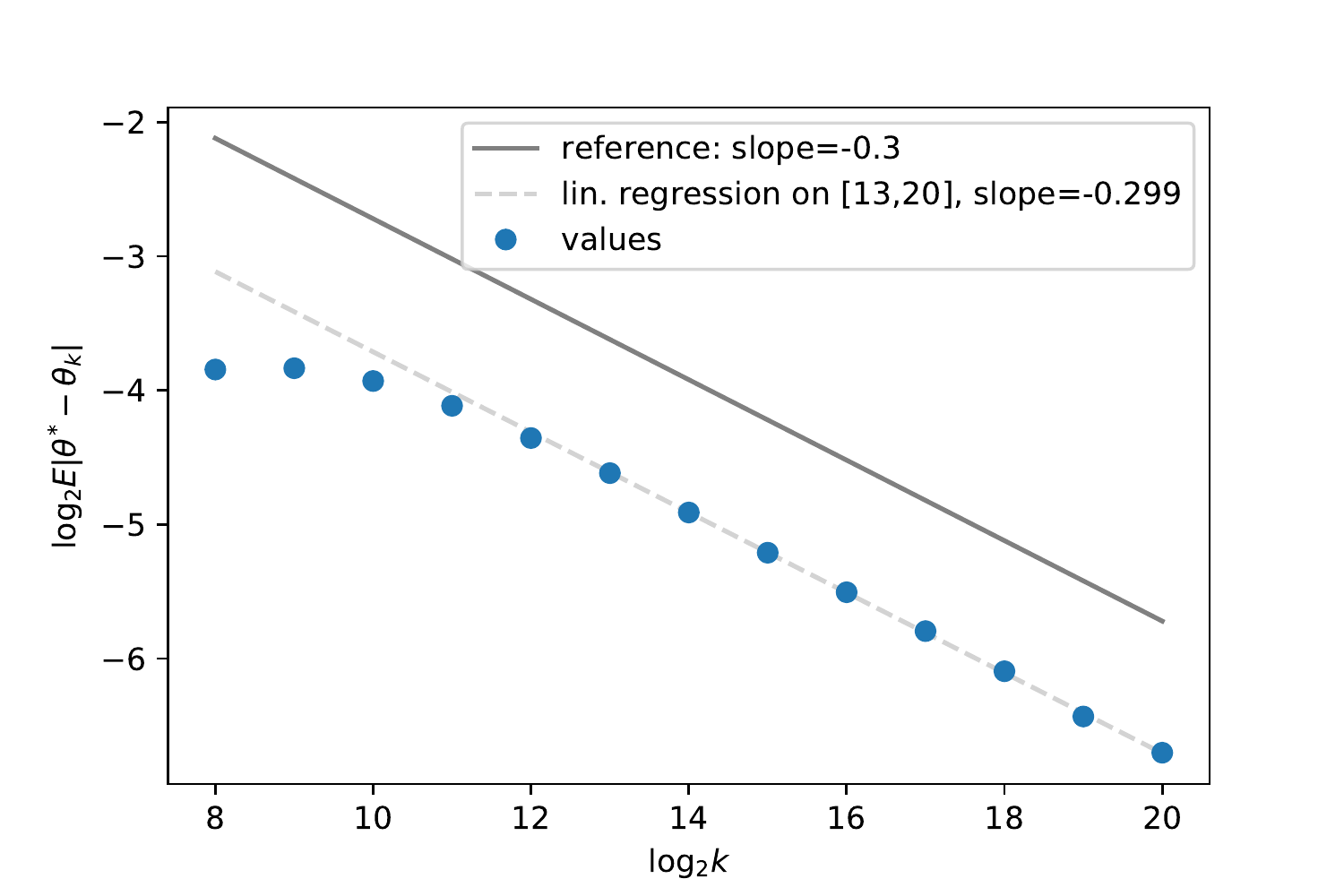}
\caption{$X_{k+1}$ and $X'_{k+1}$ independent}
\label{fig:convergence_ind_ind}
\end{subfigure}
\begin{subfigure}{0.5\textwidth}
\includegraphics[width=1\linewidth, height=5.5cm]{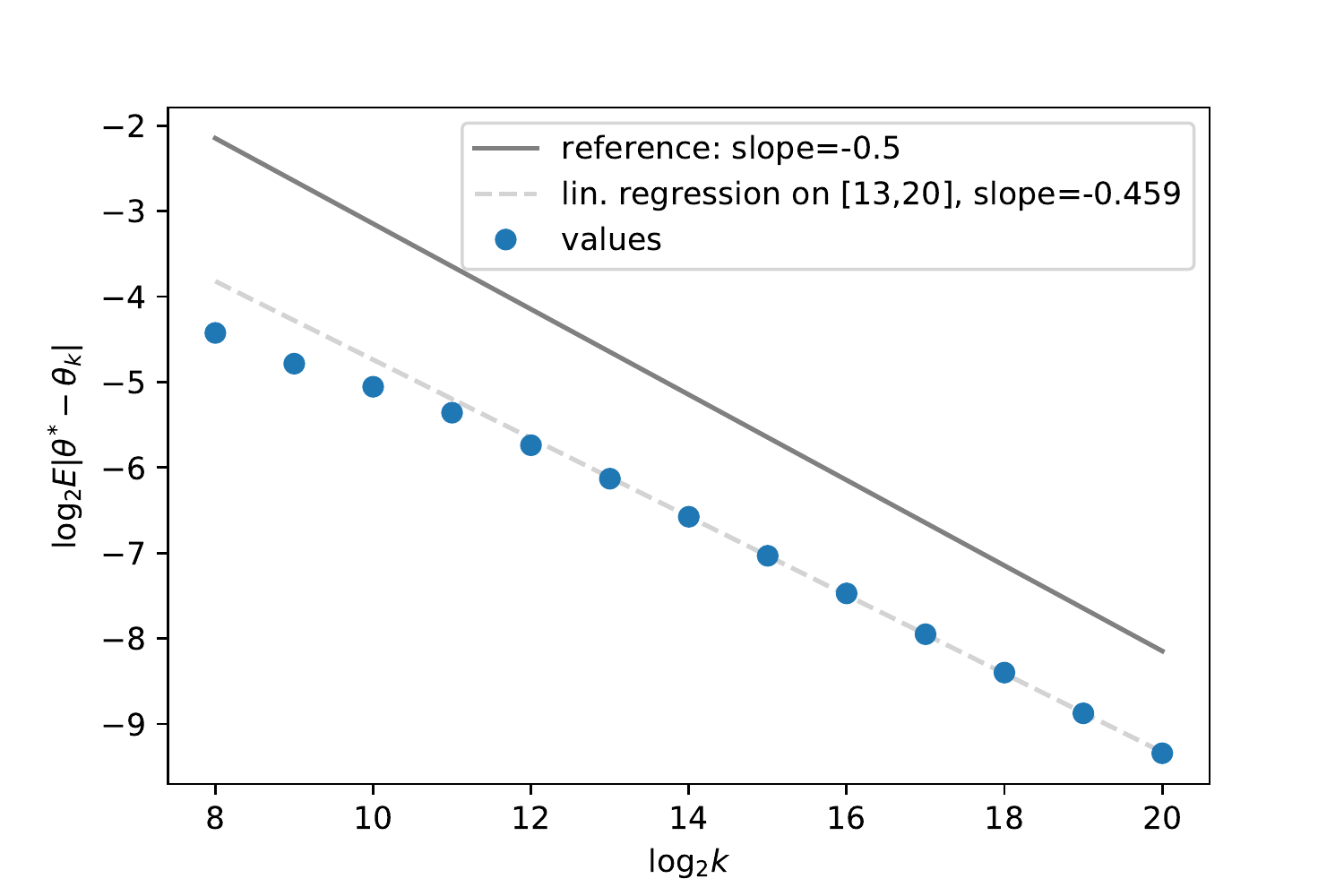}
\caption{ $X_{k+1}=X'_{k+1}$}
\label{fig:convergence_ind_same}
\end{subfigure}
\caption{Log-log plot of $\E|\theta^*-\theta_k|$ vs. number of iterations for i.i.d. standard normal innovations }
\label{fig:convergence_ind}
\end{figure}

 \subsubsection{Uniform([0,1]) distribution}

 Let $X\sim Uniform([0,1])$. Then the function we aim to find the minimum of is
 $U_2(\theta) = 1/3-\theta+\theta^2+F_{uni}(\theta),$ where $F_{uni}$ denotes the the cumulative distribution function of Uniform([0,1]) distribution. 
 We get the solution $\theta^{*} =0.$ 
 
 Figure \ref{fig:convergence_uni} illustrates the convergence of two variations of algorithm (\ref{recursion_for_numerics}) for $U_2$, starting the iteration from $\theta_0=1$. On figure (\ref{fig:uni_ind}) we present the case where $X_{k+1}$ and $X'_{k+1}$ are independent on a log-log plot, while (\ref{fig:uni_same}) shows the case where $X_{k+1}=X'_{k+1}$, both of which yield a convergence rate of $k^{-0.14}$, worse
 that the theoretical rate $k^{-0.2}$. 

\begin{figure}[ht]
 \begin{subfigure}{0.5\textwidth}
 \includegraphics[width=1\linewidth, height=5.5cm]{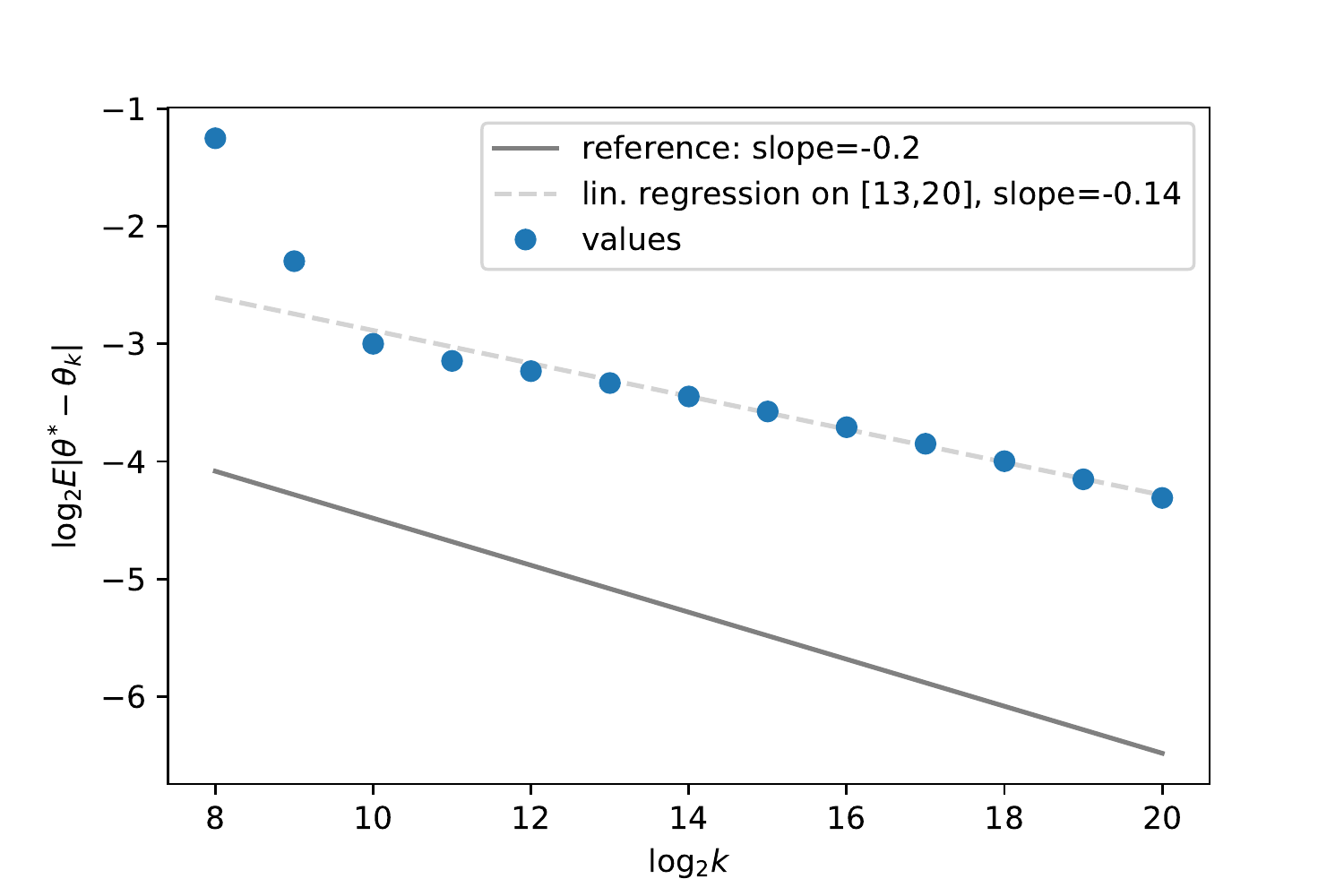}
 \caption{$X_{k+1}$ and $X'_{k+1}$ independent}
 \label{fig:uni_ind}
 \end{subfigure}
 \begin{subfigure}{0.5\textwidth}
 \includegraphics[width=1\linewidth, height=5.5cm]{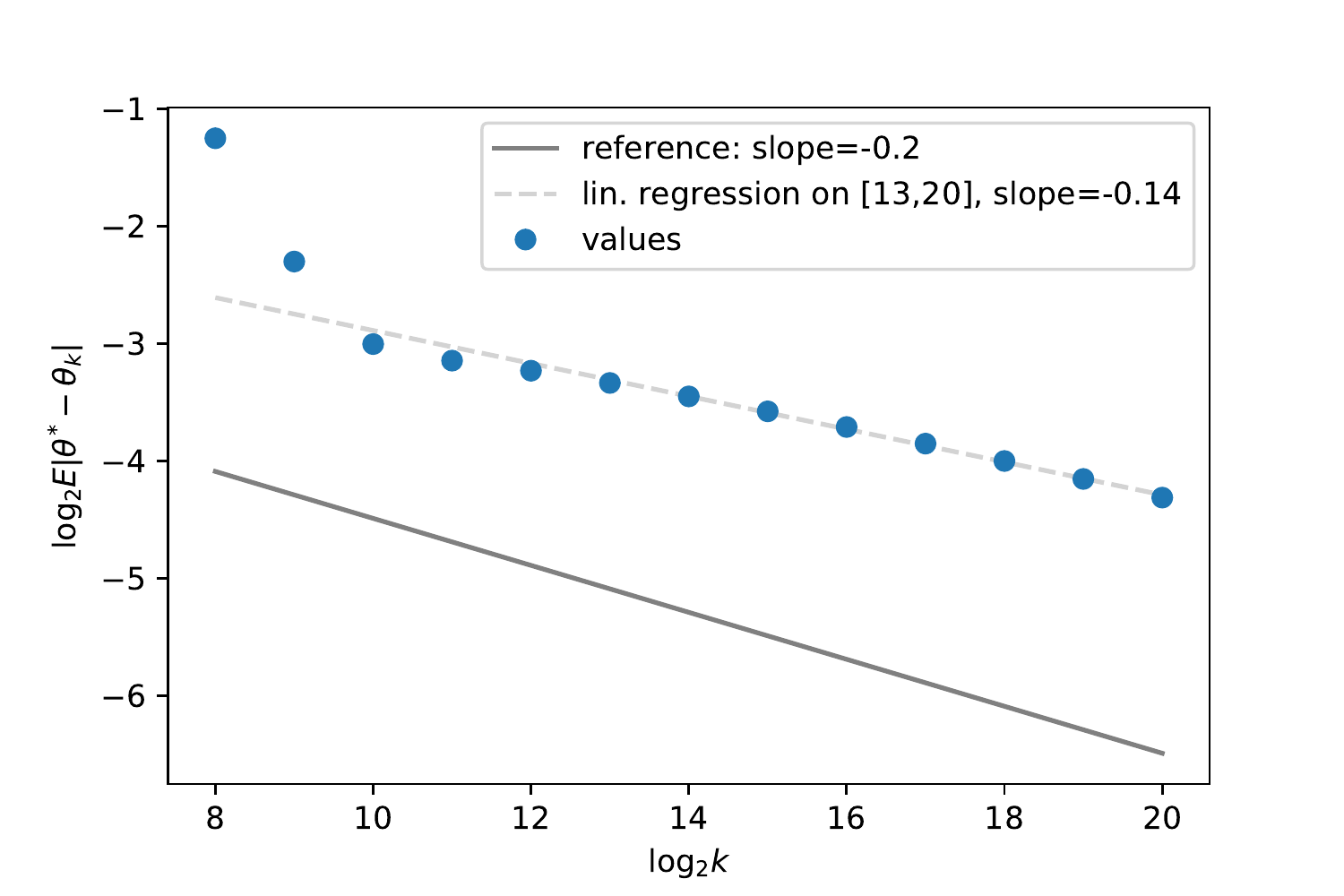}
 \caption{ $X_{k+1}=X'_{k+1}$}
 \label{fig:uni_same}
 \end{subfigure}
 \caption{Log-log plot of $\E|\theta^*-\theta_k|$ vs. number of iterations for i.i.d. uniform innovations }
 \label{fig:convergence_uni}
 \end{figure}

 \subsubsection{Beta(2,2) distribution}

 Let $X\sim Beta(2,2)$. Then the function we aim to find the minimum of is
 $U_3(\theta) = 0.3-\theta+\theta^2+F_{\beta}(\theta),$ where $F_{\beta}$ denotes the the cumulative distribution function of Beta(2,2) distribution. 
 We get the solution $\theta^{*} =\frac{2-\sqrt{2.5}}{3}\approx 0.13962.$

Figure \ref{fig:convergence_beta} illustrates the convergence of two variations of algorithm (\ref{recursion_for_numerics}) for $U_3$, starting the iteration from $\theta_0=1$. On figure (\ref{fig:convergence_beta_ind}) we present the case where $X_{k+1}$ and $X'_{k+1}$ are independent on a log-log plot, we observe a convergence rate of $k^{-0.374}$ while (\ref{fig:convergence_beta_same}) shows the case where $X_{k+1}=X'_{k+1}$ which yields a convergence rate of $k^{-0.393}$. 

 \begin{figure}[ht]
 \begin{subfigure}{0.5\textwidth}
 \includegraphics[width=1\linewidth, height=5.5cm]{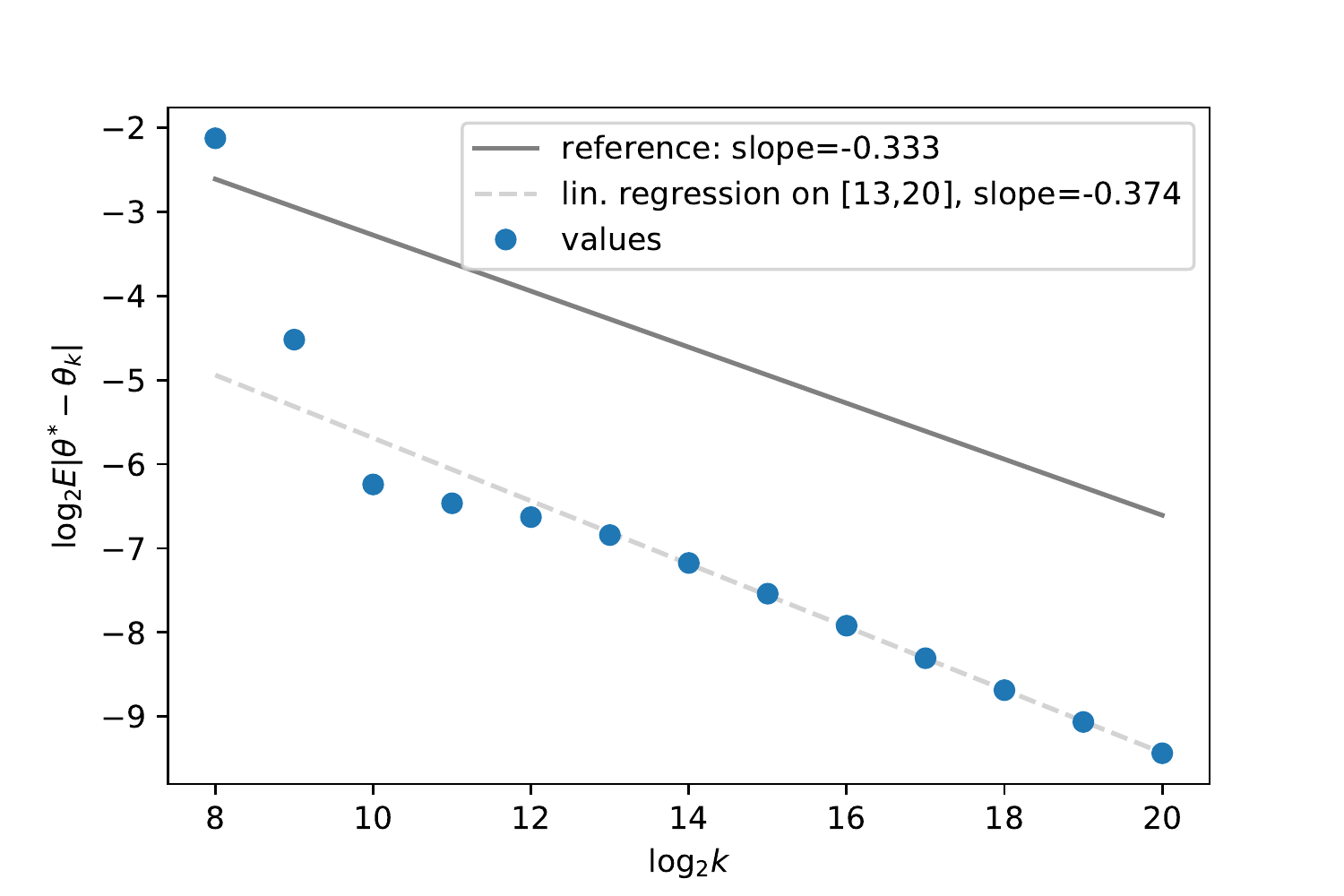}
 \caption{$X_{k+1}$ and $X'_{k+1}$ independent}
 \label{fig:convergence_beta_ind}
 \end{subfigure}
 \begin{subfigure}{0.5\textwidth}
 \includegraphics[width=1\linewidth, height=5.5cm]{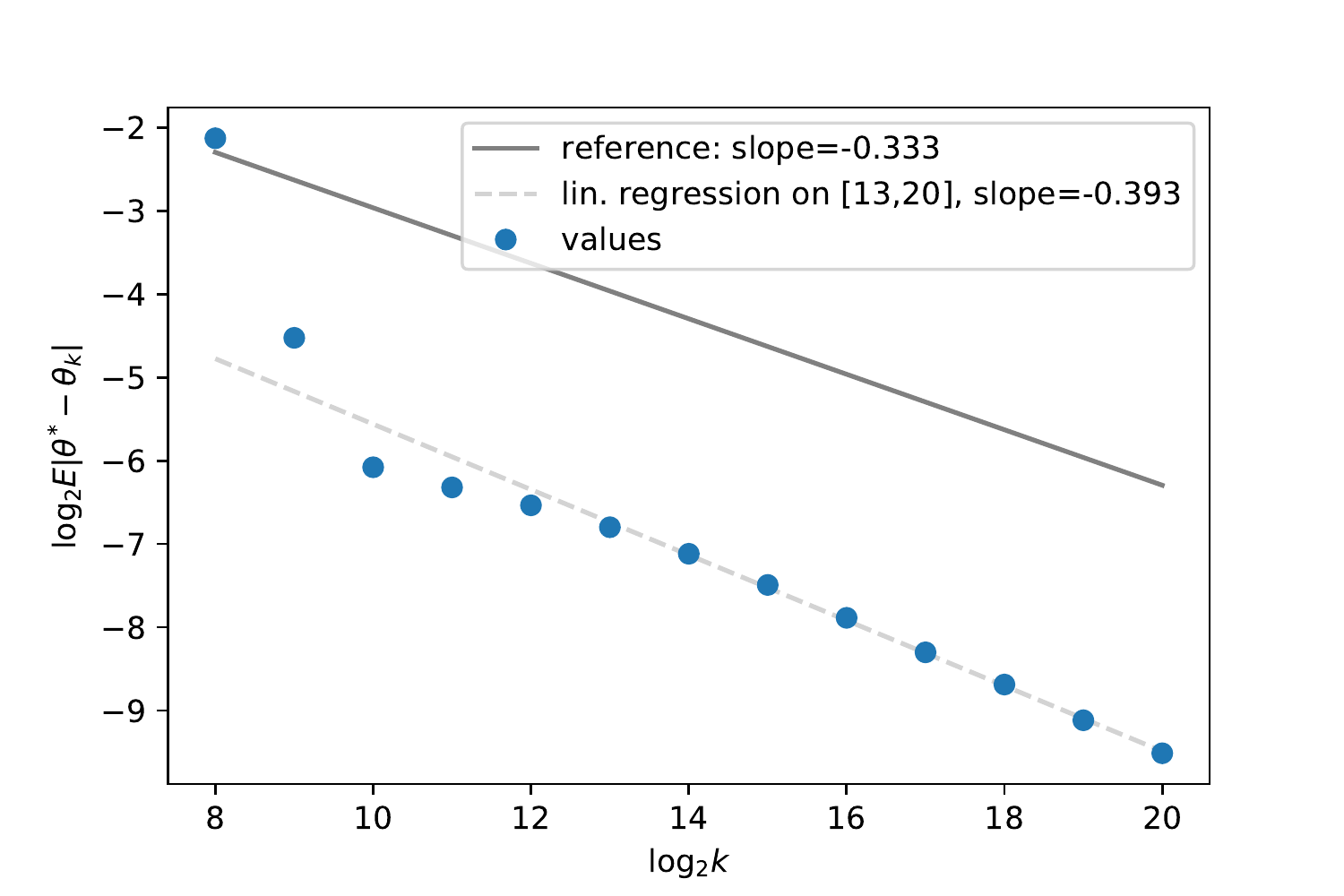}
 \caption{ $X_{k+1}=X'_{k+1}$}
 \label{fig:convergence_beta_same}
 \end{subfigure}
 \caption{Log-log plot of $\E|\theta^*-\theta_k|$ vs. number of iterations for i.i.d. beta innovations }
 \label{fig:convergence_beta}
 \end{figure}

\subsection{AR(1) innovations}

%0.75 same range(8,21) [-3.0473603691351268, -3.0454027652937365, -3.213552508923592, -3.511769453239359, -3.8635015057346274, -4.299003062227161, -4.738569209714022, -5.231852998729125, -5.722787820475134, -6.2070051767906955, -6.722792644360867, -7.1919271313168505, -7.692784659303535]

For an example with non-i.i.d.\ $X_{t}$, assume that the ``noise'' 
is an AR(1) process defined as
$$Y_{t+1}= \kappa Y_t + \varepsilon_{t+1}\text{, for }t\in\mathbb{Z}, $$
where $\varepsilon_t$ is standard normal for $t\in\mathbb{Z}$ and $\vert\kappa\vert<1$. Clearly, $Y_{t}= \sum_{k=0}^\infty \kappa ^k\varepsilon_{t-k},$ 
and therefore $Y_t\sim N\left(0, \frac{1}{1-\kappa^2}\right)$. For the sequences $X_t$ and $X_t'$ 
we have two options: either we take consecutive measurements i.e.\ $X_{k}=Y_{2k-1}$ and 
$X'_{k}=Y_{2k}$ or we use identical values, i.e. $X_{k}=X'_{k}=Y_k$. In both cases
\begin{align*}
    U_4(\theta)=\E J(\theta, X) =\theta^2+\frac{1}{1-\kappa^2}+\Phi\left(\theta\sqrt{1-\kappa^2}\right).
\end{align*}
Solving this for $\kappa=0.75$ we get the optimal value
$\theta^{*}\approx -0.13144.$
%$$\theta^{*} = \frac{-312075267\sqrt{\frac{W\left(\frac{453872141147353576883368851768961}{18970002960607759387602109010043042\pi}\right)}{2}}}{145959841}\approx -0.13144.$$

Figure \ref{fig:convergence_ar_7} and table \ref{table:AR_conv} illustrate the convergence rate of algorithm (\ref{recursion_for_numerics}) for the function $U_4$, starting from $\theta_0=0$. On figure (\ref{fig:convergence_ar_ind}) we present the rate in the case where we take consecutive measurements of the AR(1) process, ($X_{k}=Y_{2k-1}$ and $X'_{k}=Y_{2k}$), the convergence rate of $k^{-0.333}$ was observed. Figure (\ref{fig:convergence_ar7_same}) shows the case where the two measurements are the same,  ($X_{k}=X'_{k}=Y_k$), with the rate $k^{-0.487}$.

\begin{table}[h]
\begin{centering}
\begin{tabular}{l|l|l|}
\cline{2-3}
                            & consecutive observations:  $X_{k}=Y_{2k-1}$ and $X'_{k}=Y_{2k}$ & identical: $X_{k}=X'_{k}=Y_k$ \\ \hline
\multicolumn{1}{|l|}{AR(1)} & -0.333 $(R^2=0.999)$                        & -0.487 $(R^2=0.999)$                   \\ \hline
\end{tabular}
\caption{Convergence rate for AR(1) noise}
\label{table:AR_conv}
\end{centering}
\end{table}

\begin{figure}[h]
\begin{subfigure}{0.5\textwidth}
\includegraphics[width=1\linewidth, height=5.5cm]{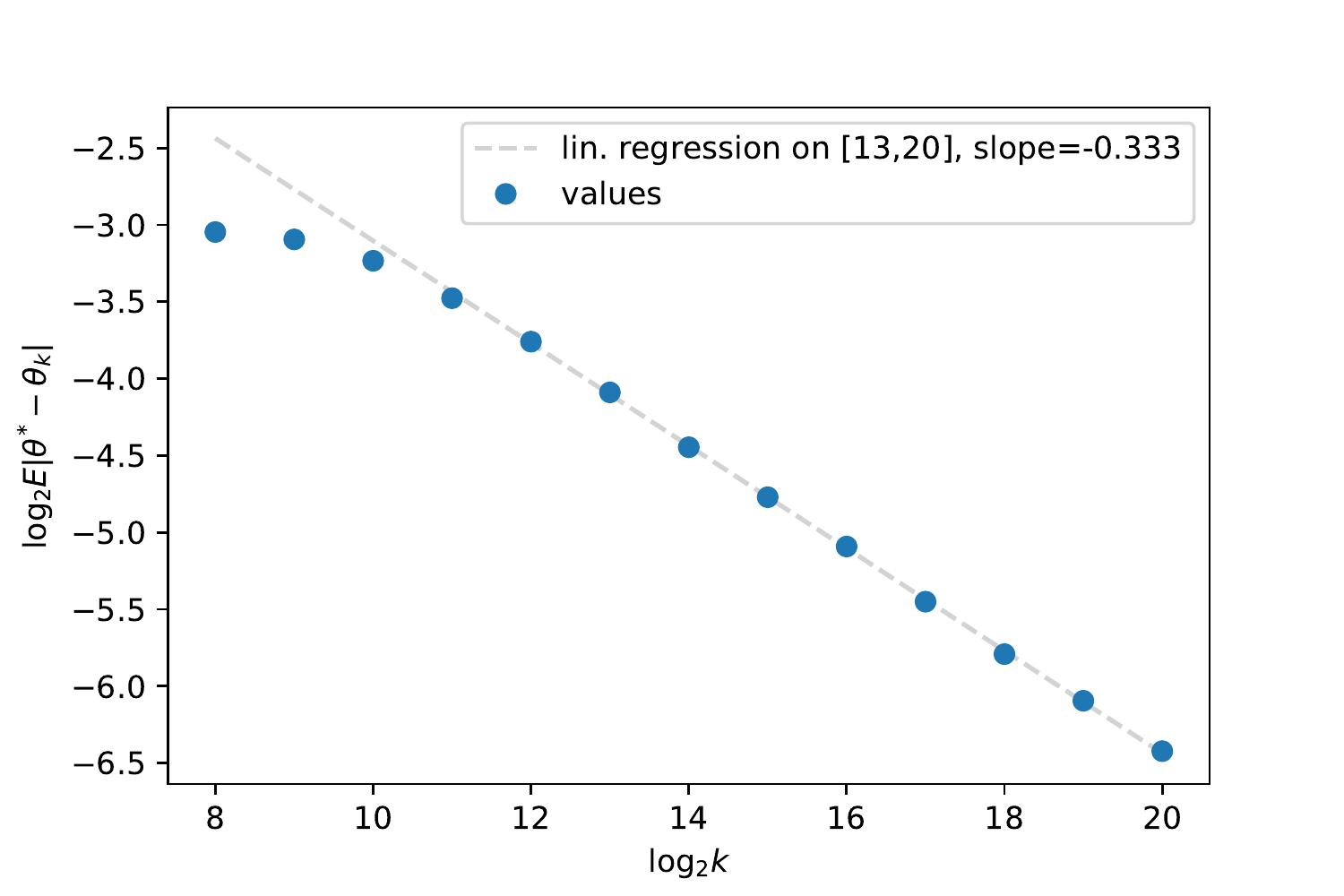}
\caption{$X_{k}=Y_{2k-1}$ and $X'_{k}=Y_{2k}$}
\label{fig:convergence_ar_ind}
\end{subfigure}
\begin{subfigure}{0.5\textwidth}
\includegraphics[width=1\linewidth, height=5.5cm]{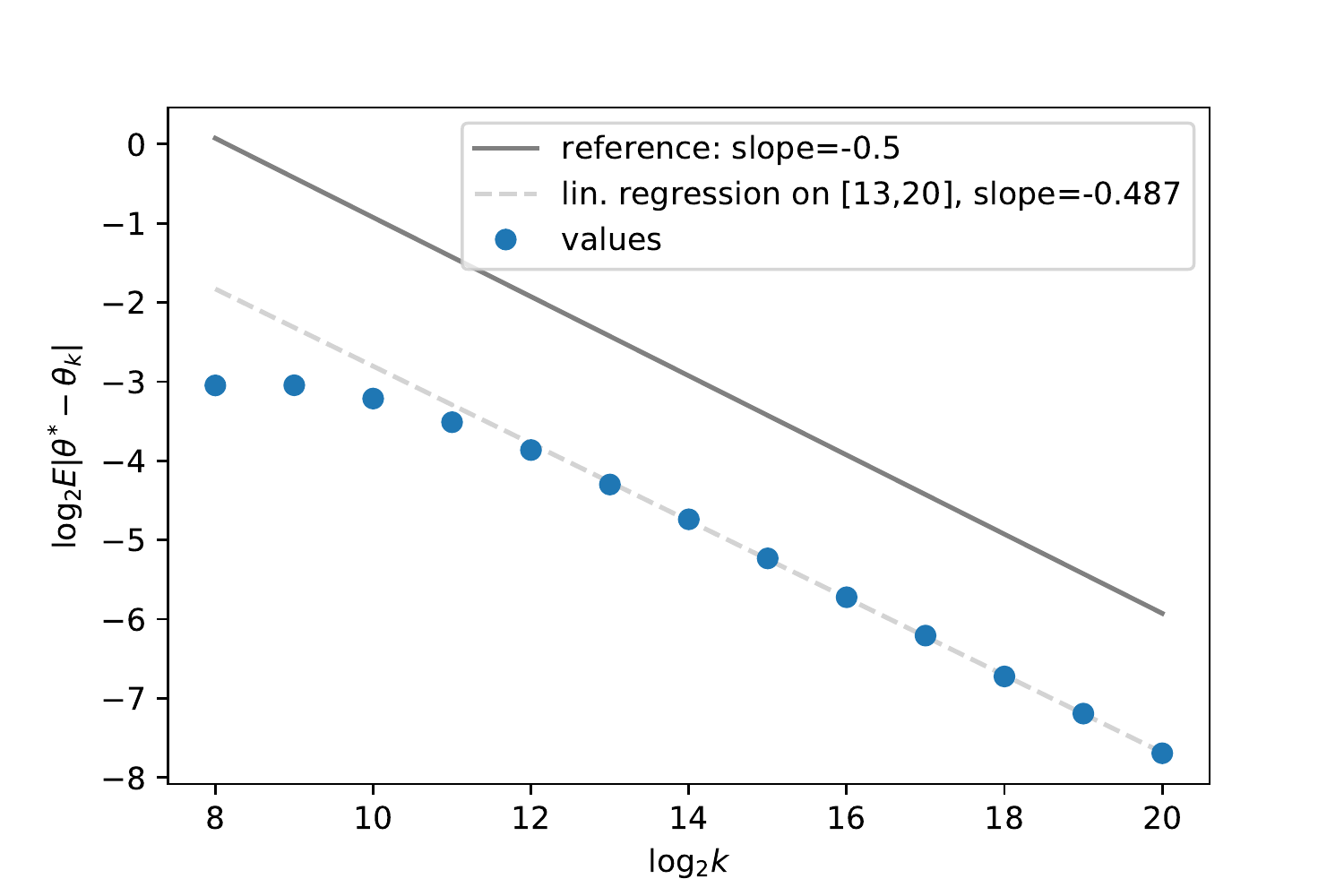}
\caption{ $X_{k}=X'_{k}=Y_k$}
\label{fig:convergence_ar7_same}
\end{subfigure}
\caption{Log-log plot of $\E|\theta^*-\theta_k|$ vs. number of iterations for AR(1) innovations }
\label{fig:convergence_ar_7}
\end{figure}

%\begin{remark}
%{\rm 
%We finally explain the pertinence of
%our results to mathematical finance.
%In algorithmic trading, portfolio
%strategies are often \emph{threshold-type}, that is,
%stocks are bought or sold when
%their prices reach certain
%prescribed levels. 

%Such algorithms can be cast in a
%rigorous mathematical framework:
%a parametric class of strategies
%can be considered where the parameters
%determine the thresholds. 
%Denoting by $\phi(\theta,x)$ the
%strategy (number of stocks)
%as a function of the threshold
%parameter $\theta$ and the current
%price $x$.Let $X_n$ denote the price
%at time $n$. If $R(\phi(\theta,X_n)$
%is the return on the strategy $\phi$
%at time $n$ then the functional
%$\theta\to R(\phi(\theta,X_n))$
%is, in general, not continuous.
%In order to find the optimal $\theta$,
%we can attempt to maximize the expected
%return $U(\theta):=\E R(\phi(\theta,X_n))$
%in $\theta$. Under suitable hypotheses
%on $\phi$, $R$
%and $X_n$ this can be done using the
%Kiefer-Wolfowitz procedure whose
%convergence is shown in the present
%paper for such a setting.}
%\end{remark}

\section{Application to mathematical finance}\label{sec:mathfin}

The price of a financial asset either follows a trend during a given period of
time or just rambles around its ``fair'' price value -- at least so it seems to
many actual traders. This ``rambling'', in more mathematical
terms, means that the price is reverting to its long-term average. Such a mean-reversion phenomenon
can be exploited by ``buying low, selling high''-type strategies. Related discussions 
involve plenty of common-sense
advice and benevolent concrete suggestion, see e.g.\ \cite{www1,www2,www3}. 
There exist also theoretical studies about optimal trading with such prices,
see e.g.\ \cite{guasoni}. However, a rigorous approach to \emph{adaptive} trading
algorithms of this type is lacking.

Results of the present paper provide theoretical convergence guarantees for such algorithms
which cannot be deduced from existing literature on stochastic approximation. The most
conspicuous feature of mean-reversion strategies is that they are triggered when
the price reaches a certain level. This means that their payoffs are \emph{discontinuous}
with respect to the parameters, gradients do not exist and only finite-difference approximations
can be used (the Kiefer-Wolfowitz method). Their convergence in the given discontinuous case
cannot be shown based on available results hence we fill an important and practically relevant
gap here.

We describe in some detail a trading model below and explain how it fits into the
framework used in the previous sections. Let the price of the observed financial
asset be described by a real-valued stochastic process $S_{t}$, $t\in\mathbb{Z}$, adapted
to a given filtration $\mathcal{F}_{t}$, $t\in\mathbb{Z}$, representing the flow of information.
(Alternatively $S_{t}$ may be the \emph{increment} of the price at $t$ which can safely
be assumed to follow a stationary process.) 

Our algorithm will be based on several dynamically updated estimators which are
assumed to be functionals of the trajectories of $S_{t}$ and possibly of another adapted process $F_{t}$
describing important economic factors. The estimate for the long-term average of the process
is denoted by $A_{t}(\theta)$ at time $t$. The upper and lower bandwith processes will be denoted 
$B_{t}^{+}(\theta)$
and $B_{t}^{-}(\theta)$, they are non-negative. All these estimates depend on a parameter
$\theta$ to be tuned, where $\theta$ ranges over a subset $Q$ of $\mathbb{R}^{d}$.

In practice, $A_{t}(\theta)$ is some moving average (or exponential moving average) of previous
values of $S$ but it may depend on the other indicators $F$ (market indices, etc.). Here $\theta$ determines,
for instance, the weights of the moving average estimate. The quantities
$B_{t}^{\pm}(\theta)$ are normally based on standard deviation estimates for $S$ but, again, may be more
complex with $\theta$ describing weighting of past information. If we peek from time $t$ back to
time $t-p$ with some $p\in\mathbb{N}$ then $A_{t}(\theta),B^{\pm}_{t}(\theta)$ are
functionals of $(S_{t-p},F_{t-p},\ldots,S_{t},F_{t})$.

The price range $[A_{t}-B^{-}_{t},A_{t}+B^{+}_{t}]$ is considered to be ``normal'' by the algorithm while
quitting that interval suggest ``extremal'' behaviour that the market should correct soon.
For example, reaching the level $A_{t}-B^{-}_{t}$ means that the price is abnormally low for
the present circumstances, hence it is worth buying a quantity $b(\theta)$ of them where, again,
the parameter $\theta$ should be optimally found. When the price returns to $A_{t'}$ at some
later time $t'$, the asset will be sold and a profit is realized. Similarly,
when reaching $A_{t}+B^{+}_{t}$, quantity $s(\theta)$ of the asset is sold (the price
being abnormally high) and it will be repurchased once the ``normal'' level $A_{t'}$ is reached
at some future $t'>t$, aiming to realize profit.

The value of the parameter $\theta$ will be updated at times $tN$, $t\in\mathbb{N}$ where
$N\geq 1$ is fixed. The (random) profit (or loss) resulting from trading on the
interval $[N(t-1),Nt]$ is denoted by $u(\theta,X_{t})$ with 
$X_{t}=(S_{N(t-1)-p},F_{N(t-1)-p},\ldots,S_{Nt},F_{Nt})$. We could even write an explicit
expression for $u$ based on the description of the trading mechanism in the previous
paragraph but it would be very cumbersome without providing additional insight hence we omit it.
We also add that, in many cases, a fee must also be paid at every transaction.
Such strategies being ``threshold-type'', the function $u$ is generically
a \emph{discontinuous} function of $\theta$. 

We furthermore argue that one \emph{cannot} smooth out $u$ and make it continuous without
losing \emph{essential} features of the problem. Approximating the indicator function of the
interval $[0,\infty)$ by a function $f$ which is $1$ on $[0,\infty)$, $0$ on $(-\infty,-\epsilon]$
for some small $\epsilon>0$ and linear on $(-\epsilon,0)$ may look reasonable at first sight but
in this way we get a Lipschitz approximation with a huge Lipschitz constant hence with a poor
convergence rate! This is just to stress that such simple tricks might work in certain practical situations but
they only obscure the real issues in the theoretical analysis (namely, there \emph{is} a discontinuity
to be handled).

The described algorithm is very close to what actual investors
do, see \cite{www1,www2,www3}. We also mention the related theoretical studies \cite{leung,cartea} which,
however, do not take an adaptive view and calculate optimal strategies for concrete models.

Taking a more realistic, adaptive approach, the investor
may seek to maximize $Eu(\theta,X_{0})$ by dynamically updating $\theta$ 
at every instant $tN$, $t\in\mathbb{N}$. Our versions of the Kiefer-Wolfowitz
algorithm, presented in the previous sections, are tailor-made for such online
optimization, both the decreasing and the fixed gain version, depending on
the circumstances. Theorems \ref{thm:convergencrate} and 
\ref{thm:fixed_gain} 
provide solid theoretical convergence
guarantees for such procedures.

\end{document}